\def\Kb{{K^{\mathrm{sep}}}}
\def\Kbp{{K_\gp^{\mathrm{sep}}}}
\def\localOp{{{\mathcal O}_\gp}}
\newcommand{\tO}[1]{{#1}\cap\intO}
\def\Z{{\mathbb Z}}
\def\A{{\mathbb A}}
\def\C{{\mathbb C}}
\def\P{{\mathbb P}}
\def\Q{{\mathbb Q}}
\def\F{{\mathbb F}}
\def\Gm{{\mathbb G}_{\mathrm{m}}}
\def\gp{{\mathfrak p}}
\def\gq{{\mathfrak q}}
\def\ga{{\mathfrak a}}
\def\gb{{\mathfrak b}}
\def\gc{{\mathfrak c}}
\def\gf{{\mathfrak f}}
\def\gd{{\mathfrak d}}
\def\gf{{\mathfrak f}}
\def\ggg{{\mathfrak g}}
\def\gr{{\mathfrak r}}
\def\gm{{\mathfrak m}}
\def\LOp{{\Lambda_{\gpO_\gp,\gp}}}
\def\DR{{\mathrm{DR}}}
\def\Map{{\mathrm{Map}}}
\def\alg{{\mbox{-}\mathrm{alg}}}
\def\Hom{{\mathrm{Hom}}}
\def\End{{\mathrm{End}}}
\def\Aut{{\mathrm{Aut}}}
\def\lcm{{\mathrm{lcm}}}
\def\fin{{\mathrm{fin}}}
\def\unr{{\mathrm{unr}}}
\def\ord{{\mathrm{ord}}}
\def\Cl{{\mathrm{Cl}}}
\def\G{{G_K}}
\def\I{{I_K}}
\def\Gb{{\bar{G}}}
\def\Gp{{G_\gp}}
\def\Ip{{I_\gp}}
\def\Id{\mathrm{Id}}
\def\E{{E}}
\def\L{{L}}
\def\Pr{{P}}
\def\V{{U}}
\def\iff{{\;\Leftrightarrow\;}}
\def\id{{\mathrm{id}}}
\newcommand{\maxO}{\mathit{O}} 
\newcommand{\sheafO}{\mathcal{O}} 
\newcommand{\scalA}{{A}} 
\newcommand{\ringb}{{B}} 
\newcommand{\ringd}{{D}} 
\newcommand{\ringr}{{R}}
\newcommand{\gpO}{\scalA}
\def\intO{{\scalA}^{\mathrm{int}}}
\newcommand{\MCM}{\mathcal{M}_{\mathrm{CM}}}
\newcommand{\sE}{\mathcal{E}}
\newcommand{\sL}{\mathcal{L}}
\newcommand{\bP}{\mathbb{P}}
\newcommand{\fl}{\mathrm{fl}}
\newcommand{\per}[2]{{#1}(#2)}
\newcommand{\perfl}[2]{{#1}{(#2)}_{\fl}}
\newcommand{\perred}[2]{{#1}{(#2)}_{\mathrm{red}}}
\newcommand{\peru}[2]{{#1}^{(#2)}}
\newcommand{\perv}[2]{R^{[#1]}_{#2}}
\newcommand{\red}{\mathrm{red}}
\newcommand{\rcl}[2]{R_{#1}(#2)}
\newcommand{\cyc}{Z(\Pr,\gr)}
\newcommand{\rcm}{ray class\ }
\newcommand{\mx}{M}
\newcommand{\GL}{\mathrm{GL}}
\newcommand{\labeleq}[1]{{\,\buildrel #1\over=\,}}
\newcommand{\longlabelmap}[1]{{\,\buildrel #1\over\longrightarrow\,}}
\def\longrightisomap{\,{\buildrel \sim\over\longrightarrow}\,}
\newcommand{\longmap}{{\,\longrightarrow\,}}
\newcommand{\smcoprod}{{\,\scriptstyle\amalg\,}}
\newcommand{\spsmcoprod}{{\;\scriptstyle\amalg\;}}
\DeclareMathOperator{\Spec}{Spec}
\DeclareMathOperator{\colim}{\mathrm{colim}}
\newtheorem{thm}[subsection]{Theorem}
\newtheorem{proposition}[subsection]{Proposition}
\newtheorem{corollary}[subsection]{Corollary}
\newtheorem{lemma}[subsection]{Lemma}
\begin{document}
\title{Explicit class field theory and the algebraic geometry of $\Lambda$-rings}
\author[J.~Borger, B.~de~Smit]{James Borger, Bart de Smit}
\date{\today}
\email{james.borger@anu.edu.au, desmit@math.leidenuniv.nl}
\thanks{The first part of this paper is a revision of our 
note~\cite{Borger-de-Smit:Deligne-Ribet2-preprint}, which
has had some circulation since 2006 and has been used by Yalkinoglu~\cite{Yalkinoglu:DR-monoid} and
Uramoto~\cite{Uramoto:semi-galois-II}.}

\begin{abstract}
We consider generalized $\Lambda$-structures on algebras and schemes over the ring of integers $\maxO_K$ of a
number field $K$. When $K=\Q$, these agree with the $\lambda$-ring structures of algebraic K-theory. We then
study reduced finite flat $\Lambda$-rings over $\maxO_K$ and show that the maximal ones are classified in a
Galois theoretic manner by the ray class monoid of Deligne and Ribet. Second, we show that the periodic loci on
any $\Lambda$-scheme of finite type over $\maxO_K$ generate a canonical family of abelian extensions of $K$.
This raises the possibility that $\Lambda$-schemes could provide a framework for explicit class field theory,
and we show that the classical explicit class field theories for the rational numbers and imaginary quadratic
fields can be set naturally in this framework. This approach has the further merit of allowing for some precise
questions in the spirit of Hilbert's 12th Problem.

In an interlude which might be of independent interest,
we define rings of periodic big Witt vectors and relate 
them to the global class field theoretical mathematics of the rest of the paper.
\end{abstract}

\maketitle

\setcounter{tocdepth}{1}
\tableofcontents

\section{Introduction} Let $\scalA$ be a Dedekind domain with fraction
field $K$. Let $\Pr$ be a set of maximal ideals of $\scalA$ such that for each $\gp\in\Pr$,
the residue field $k(\gp)=\scalA/\gp$ has finite cardinality $N(\gp)$.
We will be most interested in the case where $K$ is a number field or local field, $\scalA$
is the full ring of integers $\maxO_K$ of $K$, and
$\Pr$ is the set $\mx_K$ of all maximal ideals of $\maxO_K$.

Let $\ringb$ be a (commutative) $\scalA$-algebra. Then for
each $\gp\in\Pr$ the algebra $\ringb/\gp \ringb= \ringb \otimes_\scalA k(\gp)$ over
$k(\gp)$ has a natural $k(\gp)$-algebra endomorphism
$F_\gp\colon x\mapsto x^{N(\gp)}$, which is called the Frobenius
endomorphism.  By a Frobenius lift on $\ringb$ at $\gp$ we mean an
$\scalA$-algebra endomorphism $\psi_\gp\:\ringb\to\ringb$ such that $\psi_\gp\otimes
k(\gp)=F_\gp$.  

We define a \emph{$\Lambda_{\scalA,\Pr}$-structure} on $\ringb$ to be a set map
$\Pr\to\End_{\scalA\alg}(\E)$, denoted $\gp\mapsto\psi_\gp$, such that 
\begin{enumerate}
	\item $\psi_\gp$ is a Frobenius lift at $\gp$ for each $\gp\in \Pr$.
	\item $\psi_\gp \circ\psi_\gq =\psi_\gq \circ\psi_\gp$ for all $\gp,\gq\in\Pr$.
\end{enumerate}
By a \emph{$\Lambda_{\scalA,\Pr}$-ring} we mean an $\scalA$-algebra with $\Lambda_{\scalA,\Pr}$-structure.
(In fact, this definition of $\Lambda_{\scalA,\Pr}$-structure 
is well-behaved only when $\ringb$ is torsion free as an $\scalA$-module,
but since all the $\Lambda_{\scalA,\Pr}$-rings in
this paper have that property, we will use the simple definition given above.
For the general one, see \cite{Borger:BGWV-I}.)

For example, if $\scalA$ is the ring of integers of a number field $K$ and 
$\ringb$ is the ring of integers of a subfield $L$ of the strict Hilbert
class field of $K$, then $\ringb$ has a unique $\Lambda_{\scalA,\Pr}$-structure: 
$\psi_\gp$ is the Artin symbol of $\gp$ in the field
extension $K\subseteq L$.

Observe that if $\gp$ satisfies $\ringb\otimes_A k(\gp)=0$, then the lifting condition (1)
is vacuous.  In particular, if $\ringb$ is an algebra over $K$, then any
commuting collection of $K$-automorphisms of $\ringb$ indexed by the
maximal ideals of $\Pr$ is a $\Lambda_{\scalA,\Pr}$-structure on $\ringb$.
At a different extreme, if $\Pr$ consists of one maximal ideal, for example if $\scalA$ is a local ring,
then the commutation condition (2) is vacuous.

When $\scalA=\Z$ and $\Pr=\mx_\Q$, Wilkerson and Joyal have shown that a
$\Lambda_{\scalA,\Pr}$-structure on a ring without $\Z$-torsion is the same as a
$\lambda$-ring structure in the sense of algebraic K-theory \cite{Wilkerson}\cite{Joyal:Lambda}. 
For instance, for any abelian group $M$ we
have a natural $\Lambda_{\Z,\Pr}$-structure on the group ring $\Z[M]$ given by
$\psi_{(p)}(m)=m^p$ for $m\in M$ and prime $p$. 
In an earlier paper \cite{Borger-deSmit:Integral-models}, we showed that
a $\Lambda_{\Z,\Pr}$-ring that is reduced
and finite flat over $\Z$ is a sub-$\Lambda_{\Z,\Pr}$-ring of $\Z[C]^n$ for some
finite cyclic group $C$ and integer $n\geq 0$. 
The proof uses the
explicit description of ray class fields over $\Q$ as cyclotomic
fields. 

Over a general number field, class field theory is less explicit, and
the generalizations we present in the present paper are consequently
less explicit.  However, we can still give a very similar criterion
for a finite \'etale $K$-algebra $\E$ with $\Lambda_{\scalA,\Pr}$-structure to admit an integral 
$\Lambda_{\scalA,\Pr}$-model,
by which we mean a sub-$\Lambda_{\scalA,\Pr}$-ring $\ringb\subseteq \E$ which is finite flat as an $\scalA$-module
such that the induced map $K\otimes_{\scalA}\ringb \to \E$ is a bijection. See theorem
\ref{global-thm-intro} below.

Let $\Id_{\Pr}$ denote the set of non-zero ideals of $\scalA$ divisible only by the primes in $\Pr$.
It as a monoid under ideal multiplication, the free commutative
monoid on the set $\Pr$.
Let $\Kb$ be a separable closure of $K$, and let $\G$ denote the
Galois group of $\Kb$ over $K$. It is a profinite group. By a $\G$-set
$X$ we mean a finite discrete set with a continuous $\G$-action.
By Grothendieck's formulation of Galois theory, a finite
\'etale $K$-algebra $\E$ is determined by the $\G$-set $S$ consisting
of all $K$-algebra homomorphisms $\E\to \Kb$.
Giving a $\Lambda_{\scalA,\Pr}$-structure on $\E$ then translates to giving a monoid map
$\Id_{\Pr}\to\Map_\G(S,S)$.  By giving $\Id_{\Pr}$ the discrete topology, we
see that the category of $\Lambda_{\scalA,\Pr}$-rings whose underlying $\scalA$-algebra is
a finite \'etale $K$-algebra is anti-equivalent to the category of
finite discrete sets with a continuous action of the monoid
$\G\times \Id_{\Pr}$.

Let us first consider the case where $\scalA$ is a complete discrete valuation ring and $\Pr$
consists of the single maximal ideal $\gp$. Then $\Id_{\Pr}$ is isomorphic as a monoid to the
monoid of non-negative integers under addition.  Let $\I\subseteq\G$ be
the inertia subgroup.  Then $\I$ is normal in $\G$ and $\G/\I$ is the
absolute Galois group of $k(\gp)$, which contains the Frobenius
element $F\in \G/\I$ given by $x\mapsto x^{N(\gp)}$. Thus, $F$ acts
on any $\G$-set on which $\I$ acts trivially.

\begin{thm}
\label{local-thm}
Suppose that $\scalA$ is a complete discrete valuation ring
and that $\Pr$ consists of the single maximal ideal $\gp$.
Let $\E$ be a finite \'etale $K$-algebra with a $\Lambda_{\scalA,\Pr}$-structure,
and let $S$ be the set of $K$-algebra maps from $\E$ to $\Kb$.
Then $K$ has an integral $\Lambda_{\scalA,\Pr}$-model if and only if the
action of $\G\times \Id_{\Pr}$ on $S$ satisfies the following two conditions:
\begin{enumerate}
\item the group $\I$ acts trivially on $S_{\unr}=\bigcap_{n\geq 0}\gp^n S$;
\item the elements $\gp\in \Id_{\Pr}$ and $F\in \G/\I$ act in the same way on $S_{\unr}$.
\end{enumerate}
\end{thm}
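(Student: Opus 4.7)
The plan is to exploit the filtration of sub-$K$-algebras $\E = \E^{(0)} \supseteq \E^{(1)} \supseteq \cdots \supseteq \E^{(N)} = \E_\unr$, where $\E^{(n)} := \psi_\gp^n(\E)$ and $N$ is large enough that the chain stabilizes (possible since $\dim_K \E < \infty$), together with the corresponding chain $\maxO^{(n)} := \maxO \cap \E^{(n)}$ of sub-$\scalA$-algebras inside the integral closure $\maxO$ of $\scalA$ in $\E$. Two basic facts drive both directions: $\psi_\gp$ restricts to an automorphism of $\E_\unr$, and $\psi_\gp(\maxO^{(n)}) \subseteq \maxO^{(n+1)}$ for each $n$. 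Via the Galois correspondence, $S_\unr$ is the $\G$-set attached to $\E_\unr$, so conditions (1) and (2) translate, respectively, into $\E_\unr$ being unramified and $\psi_\gp$ being a Frobenius lift on $\maxO_\unr = \maxO^{(N)}$.

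For the forward direction, given an integral $\Lambda_{\scalA,\Pr}$-model $\ringb \subseteq \E$, I would consider the saturated sub-$\scalA$-algebra $\ringb_\unr := \ringb \cap \E_\unr$, which is $\psi_\gp$-stable. Because the $K$-algebra automorphism $\psi_\gp|_{\E_\unr}$ preserves the trace form, it preserves the discriminant of the lattice $\ringb_\unr$; combined with $\psi_\gp(\ringb_\unr) \subseteq \ringb_\unr$ this forces $\psi_\gp$ to be an automorphism of $\ringb_\unr$. Since $\ringb_\unr$ is saturated in $\ringb$, the Frobenius lift relation descends, so $\psi_\gp$ and the $N(\gp)$-power map coincide on $\ringb_\unr/\gp\ringb_\unr$. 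This quotient is then a finite $k(\gp)$-algebra on which the Frobenius is bijective, hence reduced, hence \'etale. Therefore $\ringb_\unr$ is finite \'etale over $\scalA$, so $\E_\unr$ is unramified and $\psi_\gp$ is identified with the residue Frobenius on $S_\unr$, yielding (1) and (2).

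For the converse, conditions (1) and (2) make $\maxO_\unr$ a finite \'etale $\Lambda_{\scalA,\Pr}$-model of $\E_\unr$. I would then build the required model of $\E$ as the telescoping sum
\[
\ringb := \sum_{n=0}^{N} \gp^n\, \maxO^{(N-n)} \;\subseteq\; \maxO.
\]
It contains $\gp^N\maxO$, hence has full $K$-rank; it is an $\scalA$-subalgebra because $\maxO^{(N-n)}\maxO^{(N-m)} \subseteq \maxO^{(N-\max(n,m))}$ and $n+m \geq \max(n,m)$; and it is $\psi_\gp$-stable because $\psi_\gp(\maxO^{(m)}) \subseteq \maxO^{(m+1)}$. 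The main obstacle is verifying the Frobenius lift condition $\psi_\gp(x) - x^{N(\gp)} \in \gp\ringb$ for $x \in \ringb$. Writing $x = x_0 + x_1$ with $x_0 \in \maxO_\unr$ and $x_1 \in \sum_{n \geq 1}\gp^n\maxO^{(N-n)}$: the leading contribution $\psi_\gp(x_0) - x_0^{N(\gp)}$ lies in $\gp\maxO_\unr \subseteq \gp\ringb$ by the Frobenius lift property on $\maxO_\unr$; the term $\psi_\gp(x_1)$ lies in $\gp\ringb$ because $\psi_\gp$ sends $\gp^n\maxO^{(N-n)}$ into $\gp\cdot\gp^{n-1}\maxO^{(N-(n-1))}$ for $n \geq 1$; and each cross-term acquires an extra factor of $\gp$, either from $\binom{N(\gp)}{k} \in p\scalA \subseteq \gp$ when $1 \leq k < N(\gp)$, or from the bookkeeping identity $\sum n_i - \max n_i \geq 1$ in any monomial $\prod y_{n_i}$ of degree $N(\gp) \geq 2$ whose factors all have filtration level $n_i \geq 1$. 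Matching these against $\gp\ringb = \sum_{n=0}^N \gp^{n+1}\maxO^{(N-n)}$ term by term finishes the verification.
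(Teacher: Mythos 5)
Your proof is correct. The forward direction matches the paper's: both use the discriminant invariance of the $K$-algebra automorphism $\psi_\gp|_{\E_\unr}$ to conclude that it is an automorphism of the induced order, hence that the mod-$\gp$ Frobenius is bijective, hence that the order is \'etale over $\scalA$; the only cosmetic difference is that the paper works with the image of $\ringb$ in the quotient factor $\E_0$ while you work with $\ringb\cap\E_\unr$ inside the subalgebra $\E_\unr=\psi_\gp^N(\E)$, and these agree under the canonical identification $\E_0\cong\E_\unr$. The converse, however, is a genuinely different construction. The paper first establishes the level decomposition $\E=\E_0\times\cdots\times\E_n$ and explicitly builds a $\Lambda$-ring section $j\colon\E_0\to\E$ of the projection, then takes $\ringb=j(R_0)\oplus(0\times\gp^nR_1\times\cdots\times\gp R_n)$ and verifies the Frobenius-lift condition componentwise via $f_i(\ga_{i-1})\subseteq\gp\ga_i$ and $\ga_i^{N(\gp)}\subseteq\gp\ga_i$. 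You avoid both the decomposition and the splitting $j$ by working intrinsically with the filtration $\E^{(m)}=\psi_\gp^m(\E)$ and the corresponding chain of orders $\maxO^{(m)}$, taking $\ringb=\sum_{m=0}^N\gp^m\maxO^{(N-m)}$; this is cleaner to set up, at the cost of a more delicate Frobenius-lift check in which the cross terms of the binomial expansion $(x_0+x_1)^{N(\gp)}$ must be controlled via $p\mid\binom{N(\gp)}{k}$ for $0<k<N(\gp)$ together with the estimate $\sum_i m_i - \max_i m_i\geq 1$ for the top-degree monomials. A short computation with the paper's coordinate formula for $\psi_\gp$ shows $\psi_\gp^N(\E)=j(\E_0)$ once $N\geq n$, so your $\maxO^{(N)}$ is precisely the paper's $j(R_0)$; the two constructions share the same unramified core and differ only in how the ramified levels are weighted by powers of $\gp$, but both yield valid integral $\Lambda$-models, which is all the theorem asks for.
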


\smallskip\noindent
See section~\ref{sec:local} for the proof.

Next, consider the global case, where $\scalA$ is the ring of integers in a number
field.

In order to express our result, let us first recall the definition of the \rcm monoid. A cycle of $K$ is a
formal product $\gf=\prod_\gp\gp^{n_\gp}$, where the product ranges over all primes of $K$, both finite and
infinite, all $n_\gp$ are non-negative integers, only finitely many of which are non-zero, and we have
$n_\gp\in\{0,1\}$ for real primes $\gp$, and $n_\gp=0$ for complex primes $\gp$. The finite part of $\gf$ is
$\gf_{\fin}=\prod_{\gp<\infty}\gp^{n_\gp}$, which can be viewed as an ideal of $\scalA$. We write
$\ord_\gp(\gf)=n_\gp$.

For a cycle $\gf$, we say that two integral ideals $\ga,\gb$
are \emph{$\gf$-equivalent}, and write $\ga\sim_{\gf}\gb$, if the following two conditions are satisfied:
	\begin{enumerate}
		\item $\ga$ and $\gb$ have the same greatest common divisor $\gd$ with $\gf_{\fin}$
		\item $\ga\gd^{-1}$ and $\gb\gd^{-1}$ represent the same class in 
			the ray class group $\Cl(\gf\gd^{-1})$ of conductor $\gf\gd^{-1}$.
	\end{enumerate}
This is an equivalence relation, and we write $\DR_{\Pr}(\gf)$ for the quotient $\Id_{\Pr}/{\sim_\gf}$. Because
$\sim_{\gf}$ is preserved by multiplication of ideals, $\DR_{\Pr}(\gf)$ inherits a unique monoid structure from
$\Id_\Pr$. We call it the \emph{\rcm monoid (or Deligne--Ribet monoid)} of conductor $\gf$ supported at $\Pr$.
It was introduced in Deligne--Ribet \cite{Deligne-Ribet} in the case where $\Pr=\mx_K$ and every real place
divides $\gf$. For alternative definitions of $\gf$-equivalence and $\DR_{\Pr}(\gf)$, see section~\ref{sec:DR}.

Let us say that $\Pr$ is \emph{Chebotarev dense} if any element of any ray class group $\Cl(\gf)$ can be
represented by an ideal supported at $\Pr$, or equivalently by infinitely many such ideals. For example, by
Chebotarev's theorem, any set $\Pr$ consisting of all but finitely many maximal ideals is Chebotarev dense. 
Whenever
$\Pr$ is Chebotarev dense, any element of $\Cl(\gf)$ can be written as the class of an ideal supported at $\Pr$,
and hence gives a well-defined element of $\DR_\Pr(\gf)$. This defines a map $\Cl(\gf)\to \DR_{\Pr}(\gf)$, which
is in fact injective. Composing with the Artin symbol defines a map
	\begin{equation} 
		\label{GalDRmap}
		\G\longmap \Cl(\gf)\longmap \DR_{\Pr}(\gf),
	\end{equation}
and hence a surjective map
	\begin{equation} 
		\label{DR-recmap}
		\G\times \Id_{\Pr} \longmap \DR_{\Pr}(\gf)
	\end{equation}
whose restriction to the first factor is the map (\ref{GalDRmap})
and whose restriction to the second factor is the canonical quotient map
$\Id_{\Pr}\to\DR_{\Pr}(\gf)$.

\begin{thm}
\label{global-thm-intro}
Suppose that $K$ is a number field and that $\Pr$ is Chebotarev dense.
Let $\E$ be a finite \'etale $K$-algebra with a $\Lambda_{\scalA,\Pr}$-structure,
and let $S$ be the set of $K$-algebra maps from $\E$ to $\Kb$.
Then $\E$ has an integral $\Lambda_{\scalA,\Pr}$-model if and only
if there is a cycle $\gf$ of $K$ such that the action of
$\G\times \Id_{\Pr}$ on $S$ factors (necessarily uniquely)
through the map $\G\times \Id_{\Pr} \longmap \DR_{\Pr}(\gf)$ above.
\end{thm}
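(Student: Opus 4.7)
The plan is to bootstrap from the local Theorem~\ref{local-thm}, applied at each prime $\gp\in\Pr$, and then to repackage the resulting data class-field-theoretically.

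\emph{Necessity.} Assume $\ringb\subseteq\E$ is an integral $\Lambda_{\scalA,\Pr}$-model, and let $\Pr_0\subseteq\Pr$ be the finite set of primes at which $\ringb/\scalA$ fails to be \'etale. I would apply Theorem~\ref{local-thm} to $\scalA_\gp\otimes_\scalA\ringb$ at each $\gp\in\Pr$ to conclude that $\Ip$ acts trivially on $S_\unr^{(\gp)}=\bigcap_n\gp^n S$ and that $\psi_\gp$ coincides there with a Frobenius $F_\gp$; for $\gp\in\Pr\setminus\Pr_0$ one has $S_\unr^{(\gp)}=S$. Since the $\psi_\gp$ commute, so do the Frobenii $F_\gp$ at $\gp\in\Pr\setminus\Pr_0$; by Chebotarev density these topologically generate the image of $\G$ in $\Aut(S)$, so that image is abelian. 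The $\G$-action on $S$ therefore factors through the Galois group of an abelian extension $L/K$ unramified outside $\Pr_0$ and the infinite places, and by class field theory $L\subseteq K(\gf)$ for some cycle $\gf$ with $\gf_\fin$ supported in $\Pr_0$.

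Next I would show that the combined action $\G\times\Id_\Pr\to\End(S)$ factors through $\DR_\Pr(\gf)$, after enlarging $\gf$ if necessary (say, so that $\ord_\gp(\gf)\geq|S|$ for each $\gp\mid\gf_\fin$). The $\G$-part factors through $\Cl(\gf)\hookrightarrow\DR_\Pr(\gf)$ by construction, and the identity $\psi_\gp=F_\gp$ for $\gp\in\Pr\setminus\Pr_0$ supplies the required compatibility between the two factors. The remaining content is: $\ga\sim_\gf\gb$ should imply $\psi_\ga=\psi_\gb$ on $S$. Writing $\ga=\gd\gc$ and $\gb=\gd\gc'$ with $\gc,\gc'$ coprime to $\gf_\fin$ and representing the same class in $\Cl(\gf\gd^{-1})$, the maps $\psi_\gc$ and $\psi_{\gc'}$ agree with the Artin symbols of $\gc$ and $\gc'$, so it remains to show that these Artin symbols agree on the image $\psi_\gd(S)$ --- equivalently, that the Galois action on $\psi_\gd(S)$ factors through $\Cl(\gf\gd^{-1})$. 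This follows from the trivial action of $\Ip$ on $S_\unr^{(\gp)}\supseteq\psi_\gd(S)$ for $\gp\mid\gd$ of sufficient multiplicity in $\gd$, together with higher-ramification bounds at the partially-divided primes.

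\emph{Sufficiency.} Conversely, if the action factors through $\DR_\Pr(\gf)$, then conditions~(1) and~(2) of Theorem~\ref{local-thm} hold at every $\gp\in\Pr$, because the defining relations of $\DR_\Pr(\gf)$ force $\Ip$ to act trivially on $S_\unr^{(\gp)}$ and identify $\gp$ with $F_\gp$ there. The local theorem then produces local integral $\Lambda$-models $\ringb^{(\gp)}\subseteq K_\gp\otimes_K\E$. Since $\ringb^{(\gp)}$ coincides with the maximal local order for $\gp$ outside a finite set, the intersection $\ringb\subseteq\E$ of the preimages of the $\ringb^{(\gp)}$ under $\E\to K_\gp\otimes_K\E$ is finite flat over $\scalA$, spans $\E$ over $K$, is stable under every $\psi_\gp$, and is the desired integral $\Lambda$-model.

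The main obstacle I expect is the last step of the necessity direction --- showing that the Galois action on $\psi_\gd(S)$ has conductor dividing $\gf\gd^{-1}$. This interleaves the local unramified data from Theorem~\ref{local-thm} with class-field-theoretic ramification bounds at the primes dividing $\gf$ but not at full multiplicity in $\gd$, and is the step that forces the enlargement of $\gf$.
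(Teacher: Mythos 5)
Your architecture matches the paper's: necessity via the local theorem at each prime plus Chebotarev density to force abelianness of the image of $\G$ (this is the paper's proposition~\ref{abelian}), then a choice of $\gf$; sufficiency via local maximal models from theorem~\ref{local-thm} glued into a global one (the paper's theorem~\ref{global-thm2}). But the step you flag as ``the main obstacle'' is a genuine gap as written, and it is the heart of the necessity direction. You need $c(\gd S)\mid\gf\gd^{-1}$ for \emph{every} $\gd\in\Id_\Pr$ dividing $\gf$; ``higher-ramification bounds at the partially-divided primes'' is not an argument, and the proposed enlargement $\ord_\gp(\gf)\geq|S|$ is not justified --- enlarging $\gf$ also enlarges the set of divisors $\gd$ you must control, and for $\gd$ close to $\gf_\fin$ the cycle $\gf\gd^{-1}$ has tiny finite part, so you need $\gd S$ to be essentially unramified at the primes dividing $\gd$. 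The paper closes this with two moves that make the step easy rather than analytic. First, set $\ord_\gp(\gr)=\inf\{i\geq 0:\gp^{i+1}S=\gp^iS\}$ (finitely supported, exponents at most $|S|$); lemma~\ref{gcd} then shows $\gd S=\gb S$ with $\gb=\gcd(\gd,\gr)$ and that this $\G$-set is unramified at every prime dividing $\gd\gb^{-1}$ --- exactly the vanishing you need at the fully-divided primes, and it collapses the infinitely many conditions (one per $\gd\mid\gf$) to the finitely many $\gd\mid\gr$. Second, for those finitely many $\gd$ no bound on $c(\gd S)$ is needed at all: one simply \emph{defines} $\gf$ to be divisible by $\lcm_{\gd\mid\gr}\big(\gd\,c(\gd S)\big)$, where $c(\gd S)$ is the conductor of the $\G$-set $\gd S$, which exists by class field theory once abelianness is in hand (lemma~\ref{lem:equiv}). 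So the conductors are absorbed into the choice of $\gf$ rather than estimated; no higher-ramification computation occurs.

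Two smaller issues. In the necessity direction, $L/K$ need not be unramified outside $\Pr_0$ and infinity: the local theorem gives you control only at primes of $\Pr$, and $\Pr$ is merely Chebotarev dense, so $\gf$ must also carry the ramification of $\E$ at primes outside $\Pr$ (harmless, since divisors $\gd$ in the definition of $\DR_\Pr(\gf)$ are required to lie in $\Id_\Pr$). In the sufficiency direction, intersecting in $\E$ the preimages of the $\ringb^{(\gp)}$ for $\gp\in\Pr$ only does not exclude denominators at primes outside $\Pr$, so the result need not be finite over $\scalA$; the paper intersects inside the integral closure $R$ of $\scalA$ in $\E$, which fixes this, and stability under each $\psi_\gq$ follows because $\psi_\gq(\ringb^{(\gp)})$ is a finite sub-$\Lambda_{\gpO_\gp,\gp}$-ring of $\E_\gp$ and hence contained in the maximal one.
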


It follows that the category of such $\Lambda_{\scalA,\Pr}$-rings is anti-equivalent to
the category of finite discrete sets with a continuous action by the
profinite monoid $\lim_\gf \DR_{\Pr}(\gf)$, where
the inverse limit is taken over all cycles $\gf$ with respect to the canonical surjective
maps $\DR(\gf)\to \DR(\gf')$ when $\gf'\mid \gf$.
When $K=\Q$, $\scalA=\Z$, and $\Pr=\mx_\Q$, this limit is the monoid $\hat{\Z}^{\circ}$ of all profinite 
integers under multiplication. In this case, the theorem above reduces to the first theorem of
our earlier paper \cite{Borger-deSmit:Integral-models}.
It was Lenstra who suggested that the \rcm monoid could play this 
role over general number fields.

When $\E$ admits an integral $\Lambda_{\scalA,\Pr}$-model, there must be a maximal one. (See
section~\ref{sec:max-model}.) In the example above, $\Z[x]/(x^n-1)$ is the maximal integral model of
$\Q[x]/(x^n-1)$. This is the second theorem in~\cite{Borger-deSmit:Integral-models}. In
general, let us write $\rcl{\scalA,\Pr}{\gf}$ for the maximal integral $\Lambda_{\scalA,\Pr}$-model associated
to the free $\DR_{\Pr}(\gf)$-set on one generator, namely $\DR_{\Pr}(\gf)$. We call $\rcl{\scalA,\Pr}{\gf}$ the
\emph{ray class algebra} of conductor $\gf$---just as $K(\gf)$, the ray class field of conductor $\gf$, is the
extension of $K$ corresponding to the free $\Cl(\gf)$-set on one generator. The ray class algebra is an order in 
a product of ray class fields:
	$$
	K\otimes_{\scalA}\rcl{\scalA,\Pr}{\gf} = \prod_{\gd\mid\gf,\gd\in\Id_\Pr}K(\gf\gd^{-1}).
	$$
It is typically smaller than the maximal order in the non-$\Lambda$ sense.
For example,
	$$
	\rcl{\Z,\mx_\Q}{n\infty}=\Z[x]/(x^n-1)\subsetneq \prod_{d\mid n}\Z[\zeta_d].
	$$	

The theorems in~\cite{Borger-deSmit:Integral-models} for $K=\Q$, however, give us something more than the
abstract existence theorem above. They give explicit presentations of the ray class algebras
$\rcl{\Z,\mx_\Q}{n\infty}$, namely $\Z[x]/(x^n-1)$. More importantly, the presentations are all as quotients of
a single finitely generated $\Lambda$-ring---in this case $\Z[x^{\pm 1}]$, or $\Z[x]$, where each $\psi_p$ is
defined by $\psi_p(x)=x^p$. One can view this as a $\Lambda$-refinement of the Kronecker--Weber theorem, telling
us that the function algebra $\sheafO(\mu_n)$ of the $n$-torsion subscheme $\mu_n\subset\Gm=\Spec\Z[x^{\pm 1}]$
is isomorphic as a $\Lambda_{\Z,\mx_\Q}$-ring to the ray class algebra $\rcl{\Z,\mx_\Q}{n\infty}$; this is
instead of the statement that its set of $\bar{\Q}$-points, $\mu_n(\bar{\Q})$, generates the ray class field
$\Q(n\infty)$. This refinement gives us Frobenius lifts at all primes, even those dividing the conductor, by
treating integral structures with more care. But the ray class algebras will have zero divisors
and be non-normal over the primes dividing the conductor, which some might consider a drawback. Then again,
they are normal in a $\Lambda$-ring theoretic sense, by definition.

It is natural to ask whether something like this holds for number fields $K$ larger than $\Q$.
Do the ray class algebras
$\rcl{\scalA,\Pr}{\gf}$ have a common origin in the algebraic geometry of $\Lambda_{\scalA,\Pr}$-rings? If so,
this would give a systematic way of generating ray class algebras and hence ray class fields. Or more modestly,
is it at least true that the known explicit class field theories admit a $\Lambda$-refinement as above? There is
also a converse question: when does a $\Lambda_{\scalA,\Pr}$-structure on an $\scalA$-scheme $X$ give rise to
a family of abelian extensions, as the $\Lambda_{\Z,\mx_\Q}$-structure on $\Gm$ does?

The converse is the easier direction, and we will consider it first. Let $X$ be a (flat and separated)
$A$-scheme with a $\Lambda_{\scalA,\Pr}$-structure, by which we mean a commuting family of Frobenius lifts
$\psi_\gp\:X\to X$, for $\gp\in\Pr$. 
If we are going to follow the model of $\mu_n\subseteq\Gm$ and produce abelian extensions of our number field 
$K$ by finding finite flat sub-$\Lambda_{\scalA,\Pr}$-schemes $Z\subseteq X$ and applying the
theorem above, then by this theorem, there must exist a cycle $\gf$ such that the Frobenius operators $\psi_\ga$
on $Z$ are $\gf$-periodic in $\ga$, meaning
that they depend only on the class of $\ga$ in $\DR_\Pr(\gf)$. So it is natural to consider the largest such
subscheme, the locus $\per{X}{\gf}\subseteq X$ where the operations $\psi_\ga$ depend only the class
of $\ga$ in $\DR_\Pr(\gf)$. We call $\per{X}{\gf}$ the $\gf$-periodic locus. It is defined by a large equalizer
diagram; so it does indeed exist and is a closed subscheme of $X$.

For example, in the cyclotomic setting with the $\Lambda_{\Z,\mx_\Q}$-scheme $X=\Gm$ as above and $\gf=n\infty$,
with $n\geq 1$, the $\gf$-periodicity condition is $\psi_{(m+n)}=\psi_{(m)}$ for all $m\geq 1$. In other words,
it is that the operators $\psi_{(m)}$ are periodic in $m$ with period dividing $n$. It follows that the
$\gf$-periodic locus is just the $n$-torsion locus $\mu_n$. In general, while the $\gf$-periodic locus is
similar in spirit to the $\gf$-torsion locus when $X$ is a group, they can be different: for example if $X$ is
still $\Gm$ but $\gf$ is trivial at $\infty$, so $\gf=(n)$, then the periodic locus must also be invariant under
the involution $x\mapsto x^{\pm 1}$ and is hence just $\mu_2$ if $n$ is even, or $\mu_1$ if $n$ is odd. 

But the definition of $\per{X}{\gf}$ does not even require $X$ to be a group. Thus the group scheme structure in
the traditional frameworks for explicit class field theory is replaced by a $\Lambda$-structure in ours. This
will allow us some flexibility that is not available when working with group schemes. For example, we can divide
out a CM elliptic curve, say, by its automorphism group. Although the group structure is lost, the
$\Lambda$-structure is retained and hence we can still speak of the periodic locus on the quotient. Note that
whereas in the group-scheme setting, the abelian nature of the Galois theory comes from the torsion locus
being of rank $1$ over some commutative ring of complex multiplications and then from the commutativity of 
the general linear group $\GL_1$,
in our setting, it comes from the assumption that the Frobenius lifts $\psi_\gp$ commute with each
other and then from Chebotarev's theorem.

We can now state the answer to the converse question:

\begin{thm}
\label{thm:intro-periodic}
Let $X$ be a $\Lambda_{\scalA,\Pr}$-scheme of finite type over $\scalA$, as above, and assume $\Pr$ is Chebotarev dense.
Then possibly after inverting some primes, $\per{X}{\gf}$ is an affine $\Lambda_{\scalA,\Pr}$-scheme which
is reduced and finite flat over $A$. 
\end{thm}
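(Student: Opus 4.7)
The plan is to exploit the finiteness of the monoid $\DR_\Pr(\gf)$ by reducing all arguments to a single good prime and then spreading out. By Chebotarev density there exists $\gp\in\Pr$ coprime to $\gf$; let $n$ be the order of $[\gp]$ in the finite group $\Cl(\gf)\hookrightarrow\DR_\Pr(\gf)$, so that $\gp^n\sim_\gf(1)$. On the periodic locus $Y:=\per{X}{\gf}$---by construction the largest closed subscheme on which the $\psi_\ga$ depend only on $[\ga]\in\DR_\Pr(\gf)$---this forces $\psi_\gp^n=\psi_{(1)}=\id_Y$. Since $\psi_\gp$ is a Frobenius lift at $\gp$, reducing modulo $\gp$ gives $F_\gp^n=\id$ on the fibre $Y_{k(\gp)}$, i.e.\ every local function $x$ on $Y_{k(\gp)}$ satisfies $x^{N(\gp)^n}-x=0$. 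The polynomial is separable of bounded degree, and working on an affine open cover of the finite-type scheme $X$ I conclude that $Y_{k(\gp)}$ is finite reduced \'etale over $k(\gp)$.

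To globalise I would pass to the schematic closure $Y'$ of the generic fibre $Y_K$ in $Y$, formed by quotienting out the $\scalA$-torsion; since each $\psi_\gq$ is an $\scalA$-algebra map it preserves this torsion, so $Y'$ is a $\Lambda_{\scalA,\Pr}$-subscheme of $Y$, flat over $\scalA$ by construction. Its fibre at $\gp$ is a closed subscheme of the zero-dimensional $Y_{k(\gp)}$, and local constancy of relative dimension for flat finite-type morphisms gives that $Y'_K=Y_K$ is zero-dimensional as well. Hence $Y'$ is quasi-finite and, as a closed subscheme of $X$, separated over $\scalA$; Zariski's main theorem embeds it as an open subscheme of a finite $\scalA$-scheme $\bar{Y'}$ with the same generic fibre, so the complement $\bar{Y'}\smallsetminus Y'$ is supported over finitely many closed points of $\Spec\scalA$. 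The $\scalA$-torsion of the coordinate rings of $Y$ on affine pieces is likewise killed by inverting finitely many primes. Inverting this finite union of bad primes produces $Y=Y'=\bar{Y'}$, which is finite flat and therefore affine over $\scalA$.

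A short Nakayama argument then gives reducedness. The nilradical $N$ of the now-finite flat $\scalA$-algebra $\mathcal{O}(Y)$ is a finitely generated submodule of a flat module, hence $\scalA$-torsion-free. For any $\gp\in\Pr$ coprime to $\gf$ that survives the finite inversion, the first step gives $N\otimes k(\gp)=0$, so Nakayama yields $N_\gp=0$; combined with torsion-freeness this forces $N=0$.

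The main obstacle is the globalisation step. The scheme $Y$ carries no a priori flatness or quasi-finiteness over $\scalA$, and its behaviour at primes dividing $\gf$, at primes outside $\Pr$, and at primes where the $\scalA$-torsion of its coordinate rings is concentrated is not directly controlled by the $\Lambda$-structure. The twin manoeuvres of replacing $Y$ by its flat closure and then applying Zariski's main theorem are what convert the very clean behaviour at a single good prime into a global finite-flat-reduced description of $Y$ over $\scalA$ away from a finite bad locus.
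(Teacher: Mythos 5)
Your overall strategy is the same as the paper's: at a prime $\gp\in\Pr$ with $\gp\nmid\gf$ the class $[\gp]$ is invertible of finite order in $\DR_\Pr(\gf)$, so $\psi_\gp$ has finite order on the periodic locus, the Frobenius on the fibre over $\gp$ is an automorphism of finite order, and that fibre is finite \'etale; one then passes to the flat part, spreads out, and uses Zariski's Main Theorem over the Dedekind base to get affineness and finiteness after inverting finitely many elements. That part, and the reduction of reducedness to torsion-freeness of the nilradical, are all sound (though the step you label ``Nakayama'' really needs the observation that a finite flat algebra over a DVR with reduced special fibre is reduced, since $N\subseteq\gp\sheafO(Y)$ does not immediately give $N\subseteq\gp N$).

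The genuine gap is in the globalisation from a \emph{single} good prime. You fix one $\gp$ coprime to $\gf$, show $Y'_{k(\gp)}$ is zero-dimensional, and invoke ``local constancy of relative dimension for flat finite-type morphisms'' to conclude that $Y'_K$ is zero-dimensional. Local constancy of $x\mapsto\dim_x Y'_{f(x)}$ is a statement on the total space $Y'$, and it gives you nothing about irreducible components of $Y'$ that do not meet the fibre over $\gp$. A flat closed subscheme of a finite-type $\scalA$-scheme can perfectly well miss a chosen closed fibre while having positive-dimensional generic fibre: for example $V(\pi u-1)\subseteq\bA^2_{\scalA}$ is closed, $\scalA$-flat, has empty fibre over $(\pi)$, and has generic fibre $\bA^1_K$. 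Nothing in your argument rules out such components of $Y'$. The paper closes exactly this hole by using that $\Pr$ (being Chebotarev dense) contains \emph{infinitely many} primes coprime to $\gf$, so that infinitely many closed fibres are finite and geometrically reduced, and then applying the constructibility results of EGA IV (9.2.6.2) and (9.7.7): a constructible subset of $\Spec\scalA$ containing infinitely many closed points contains the generic point. Equivalently, each component of $Y'$ dominates $\Spec\scalA$ by flatness, so by Chevalley its image contains all but finitely many closed points, and one can then run your fibre-dimension argument on that component at a good prime it actually meets. Without this (or some substitute such as properness), the passage from one closed fibre to the generic fibre is invalid.
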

The idea of the proof is that for primes $\gp\nmid\gf$, the endomorphism $\psi_\gp$ is an automorphism of finite
order because the class $[\gp]\in\DR_\Pr(\gf)$ is invertible and hence has finite order; therefore the Frobenius
endomorphism of the fiber of $X$ over $\gp$ is an automorphism of finite order, and so the fiber is finite and
geometrically reduced. Then apply the semicontinuity theorems of scheme theory. For details, see
theorem~\ref{thm:periodic-locus3}.

It then follows from theorem~\ref{global-thm-intro} that $\sheafO(\per{X}{\gf}_K)$,
the function algebra of the generic fiber of $X(\gf)$,
is a finite product of abelian extensions of $K$ of conductor dividing $\gf$. Thus any
$\Lambda_{\scalA,\Pr}$-scheme $X$ of finite type (still flat and separated) gives rise to a uniform geometric
way of constructing abelian extensions indexed by cycles $\gf$. It will not always produce arbitrarily large
abelian extensions---for example, the Chebyshev line below will only produce the ray class fields over $\Q$ with
trivial conductor at infinity, namely $\Q(\zeta_n+\zeta_n^{-1})$.

We can now state some precise versions of our original question of whether the ray class algebras have a common
origin in $\Lambda$-algebraic geometry. It is cleaner to restrict to cycles $\gf$ of a fixed type away from $\Pr$, and in particular of a fixed type at infinity. So fix a cycle $\gr$ supported away from $\Pr$, and let $\cyc$ denote the set of cycles $\gf$ which agree with $\gr$ away from the primes in $\Pr$.
We will refer to the triple $(\scalA,\Pr,\gr)$ as the \emph{context}.
\begin{enumerate}
	\item[(Q1)] Does there exist a $\Lambda_{\scalA,\Pr}$-scheme $X$ of finite type such that for all 
	$\gf\in\cyc$, the direct factors of the \'etale $K$-algebra $\sheafO(\per{X}{\gf}_K)$ generate the ray class 
	field $K(\gf)$?
\end{enumerate}

The Kronecker--Weber theorem states that the answer is positive in the cyclotomic context $(\Z,\mx_\Q,\infty)$,
with $X$ being $\Gm$ with the usual $\Lambda_{\Z,\mx_\Q}$-structure defined above. We will show it is true in
two other classical contexts of explicit class field theory, namely the real-cyclotomic context $(\Z,\mx_\Q,(1))$
and the  context $(\maxO_K,\mx_K,(1))$ where $K$ is an imaginary quadratic field.

In fact, we will show a stronger $\Lambda$-refinement holds. The strongest question one might ask is the 
following:
\begin{enumerate}
	\item[(Q2)] Does there exist a $\Lambda_{\scalA,\Pr}$-scheme $X$ of finite type such that 
	for all $\gf\in\cyc$, the function algebra $\sheafO(\per{X}{\gf})$ of the periodic locus
	is isomorphic to the ray class algebra $\rcl{\scalA,\Pr}{\gf}$?
\end{enumerate}

As mentioned above, the answer is positive in the cyclotomic context with $X=\Gm$.
But it appears to be slightly too much to ask for in general. For instance, consider
the real-cyclotomic context $(\Z,\mx_\Q,(1))$, and let $X=\Gm/(x\mapsto x^{-1})\cong\A^1_{\Z}$. Then the 
$\Lambda_{\Z,\mx_\Q}$-structure on
$\Gm$ descends to one on $X$. The operations $\psi_{(p)}$ are given by Chebyshev polynomials 
$\psi_{(p)}(y)\in\Z[y]$
determined by $\psi_{(p)}(x+x^{-1})=x^p+x^{-p}$. Then for any $n\geq 1$, the periodic locus $X(n)$ does 
indeed generate the real ray class fields,
	$$
	\Q(X(n)(\bar{\Q}))=\Q(\zeta_n+\zeta_n^{-1}),
	$$
but $\sheafO(X(n))$ does not generally agree with the full ray class algebra $\rcl{\Z,\Pr}{n}$. It is however
only of index $2$ in it, once $n$ is even, and this error even disappears in the limit as $n$ grows. So in the
real-cyclotomic context, the answer to (Q2) is as about as close to being positive without being so (at least
for the given $X$!). This discrepancy is no doubt due to the nontrivial isotropy groups of the points $\pm 
1\in\Gm$ under the involution $x\mapsto x^{-1}$, and it may very well disappear in a proper stack-theoretic 
treatment.

But if we stay in the world of schemes, we need to control it.
So given a finite reduced $\Lambda_{\scalA,\Pr}$-ring $B$, let $\tilde{B}$ denote the maximal
integral $\Lambda_{\scalA,\Pr}$-model in $K\otimes_A B$. It is the $\Lambda$-analogue of 
the integral closure of $B$.

\begin{enumerate}
	\item[(Q3)] Does there exist a $\Lambda_{\scalA,\Pr}$-scheme $X$ of finite type such that
	for all $\gf\in\cyc$, 
	\begin{enumerate}
		\item[(i)] the function algebra $\sheafO(\per{X}{\gf})$ is of finite index in
			$\sheafO(\per{X}{\gf})^{\sim}$, 
		\item[(ii)] the morphism of pro-rings
			$$
			\Big(\sheafO(\per{X}{\gf})\Big)_{\gf\in\cyc} \longmap 
				\Big(\sheafO(\per{X}{\gf})^{\sim}\Big)_{\gf\in\cyc}
			$$
			is an isomorphism, and
		\item[(iii)] $\sheafO(\per{X}{\gf})^{\sim}$ is isomorphic to $\rcl{\scalA}{\gf}$ as a 
			$\Lambda_{\scalA,\Pr}$-ring?
	\end{enumerate}

\end{enumerate}

\begin{thm}\label{thm:main}
	The answer to (Q3) is positive in the following contexts $(\scalA,\Pr,\gr)$ with the 
	$\Lambda_{\scalA,\Pr}$-schemes $X$:
	\begin{enumerate}
		\item $(\Z,\mx_\Q,\infty)$ and $X=\mathbb{G}_{\mathrm{m}/\Z}$ 
			with the $\Lambda_{\Z,\mx_\Q}$-structure above
		\item $(\Z,\mx_\Q,(1))$ and $X=\A^1_\Z$ with the Chebyshev $\Lambda_{\Z,\mx_\Q}$-structure
		\item $(\maxO_K,\mx_K,(1))$, where $K$ is imaginary quadratic with Hilbert class field $H$,
		and $X$ is $\P^1_{\maxO_H}$, viewed as a scheme over $\maxO_K$, with the Latt\`es 
		$\Lambda_{\maxO_K,\mx_K}$-structure defined in (\ref{subsec:lattes}).
	\end{enumerate}
\end{thm}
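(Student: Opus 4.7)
The common strategy is to compute the periodic locus $\per{X}{\gf}$ explicitly in each case and then to compare its coordinate ring with the ray class algebra $\rcl{\scalA,\Pr}{\gf}$ by means of Theorem~\ref{global-thm-intro}. In cases~(2) and~(3), $X$ arises as a quotient $Y/G$ where $Y$ is a more primitive $\Lambda_{\scalA,\Pr}$-scheme (namely $\Gm$ or a CM elliptic curve) and $G$ is a finite group of automorphisms commuting with the $\Lambda$-operations; I first analyze the periodic locus on $Y$ and then take $G$-invariants. The index in~(i) and the cokernel in~(ii) will come entirely from branch points of $Y\to X$, uniformly bounded in $\gf$, and will vanish in the pro-ring limit.

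Case~(1) reduces to earlier work: the $n\infty$-periodic locus of $\Gm$ is $\mu_n$, and the second theorem of~\cite{Borger-deSmit:Integral-models} identifies $\Z[x]/(x^n-1)$ with $\rcl{\Z,\mx_\Q}{n\infty}$ as a $\Lambda_{\Z,\mx_\Q}$-ring, so (i)--(iii) hold with trivial index.  For case~(2), let $G=\{x\mapsto x^{\pm 1}\}$ and $\pi\:\Gm\to\A^1_\Z$, $x\mapsto x+x^{-1}$. The Chebyshev $\Lambda$-structure on $\A^1_\Z$ is, by construction, the descent of the cyclotomic one along $\pi$, and since the $\Lambda$-operations commute with $G$, the $n$-periodic locus on $\A^1_\Z$ is the scheme-theoretic image of $\mu_n$ under $\pi$; hence
\[
\sheafO(\per{\A^1_\Z}{n})=\bigl(\Z[x]/(x^n-1)\bigr)^G.
\]
I then compare this invariant ring with $\rcl{\Z,\mx_\Q}{n}\subset\prod_{d\mid n}\maxO_{\Q(\zeta_d+\zeta_d^{-1})}$ using Theorem~\ref{global-thm-intro}: the containment is automatic, and one checks that the cokernel is a $2$-group supported at the branch points $\pm 1$ of $\pi$, bounded by $2$ once $n$ is even, as in the discussion in the introduction. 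This gives (i) and makes the pro-ring map in~(ii) an isomorphism; (iii) then follows by identifying the $\Lambda$-integral closure with $\rcl{\Z,\mx_\Q}{n}$.

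Case~(3) follows the same template with $Y=E$ a CM elliptic curve over $\maxO_H$ and $G=\Aut(E)$. The Latt\`es $\Lambda_{\maxO_K,\mx_K}$-structure on $X=E/G=\P^1_{\maxO_H}$ is the descent of the natural CM structure on $E$, for which $\psi_\gp$ is the canonical isogeny $E\to E/E[\gp]$. The argument proceeds in three steps: (a) a semicontinuity argument as in Theorem~\ref{thm:intro-periodic} shows that, away from a controlled finite set of primes, $\per{E}{\gf}=E[\gf]$, so
\[
\sheafO(\per{X}{\gf})=\sheafO(E[\gf])^G;
\]
(b) the main theorem of complex multiplication shows that the $\G\times\Id_{\mx_K}$-action on the generic fiber of $E[\gf]/G$ factors through $\DR_{\mx_K}(\gf)$, so by Theorem~\ref{global-thm-intro} the generic fiber of $\per{X}{\gf}$ is a product of ray class fields of conductors dividing $\gf$; (c) a local comparison of $\sheafO(E[\gf])^G$ and $\rcl{\maxO_K,\mx_K}{\gf}$ at the branch points of $E\to X$ (supported on $E[2]$, with an extra contribution from $E[3]$ or $E[1-i]$ in the two exceptional cases) yields (i)--(iii), with the index again bounded uniformly in $\gf$.

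The main obstacle is the integral, $\Lambda$-equivariant identification in (iii) of case~(3): matching the $\Lambda$-integral closure of $\sheafO(E[\gf])^G$ with $\rcl{\maxO_K,\mx_K}{\gf}$ as a $\Lambda_{\maxO_K,\mx_K}$-ring. Classical complex multiplication describes the Galois action and isogeny structure on $E[\gf]$ generically and up to isogeny, but the theorem demands an equality of integral $\Lambda$-rings respecting the Frobenius lifts $\psi_\gp$ at ramified primes $\gp\mid\gf$. This requires an integral, $\Lambda$-equivariant refinement of the Shimura--Taniyama reciprocity law, together with uniform control, over $\cyc$, of the branch-point corrections coming from the exceptional isotropy of $\Aut(E)$, so that the pro-ring comparison in (ii) becomes a genuine isomorphism.
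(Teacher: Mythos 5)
Your overall architecture (compute the periodic locus, compare with the ray class algebra via the Deligne--Ribet classification, blame the index on the isotropy of the group action) matches the paper's, and case (1) is fine. But there are concrete gaps. In case (2) the asserted equality $\sheafO(\per{\A^1_\Z}{n})=\bigl(\Z[x]/(x^n-1)\bigr)^G$ is false for $n$ even: the paper computes the periodic locus directly as $\Spec\Z[y]/(Q(y))$ by an explicit manipulation of the ideals generated by the Chebyshev differences $P_{n+1,1}$ and $P_{n+2,2}$ (proposition~\ref{pro:chebyshev-periodic-locus2}), and its image in $\Z[x]/(x^n-1)$ contains $2x^{n/2}$ but not $x^{n/2}$, hence has index $2$ in the invariant ring. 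Since the invariant ring \emph{is} the ray class algebra $\rcl{\Z,\Pr}{n}$ (by proposition~\ref{pro:maximality-facts}(2) applied to $\Z[x]/(x^n-1)=\rcl{\Z,\Pr}{n\infty}$), you have located the index-$2$ discrepancy between the wrong pair of rings; the step "the containment is automatic and one checks the cokernel is a $2$-group" is doing no work because the two rings you compare there are equal. The same misplacement recurs in case (3): the equality $\sheafO(\per{X}{\gf})=\sheafO(E[\gf])^G$ is exactly what fails in general. The paper shows only that $\sheafO(\sL_R(\gf)_\fl)\to\sheafO(E[\gf])^G$ is injective with cokernel killed by $|\maxO_K^*|$, via the long exact sequence in $G$-cohomology and the vanishing $(I_Z^n/I_Z^{n+1})^G=0$ computed from induced representations on cotangent spaces (proposition~\ref{pro:ell-finite-level}); the pro-ring isomorphism in (ii) then needs the nontrivial fact that $I_{m\gf}\to I_\gf$ vanishes modulo each $p\mid|\maxO_K^*|$ for sufficiently divisible $m$, proved using nilpotent thickenings by $E_p[p^j\gf]$ and lemma~\ref{lem:H-vanishing}. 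A semicontinuity argument "as in theorem~\ref{thm:intro-periodic}" cannot produce this. (Also, $E$ is not literally a $\Lambda_{\maxO_K,\mx_K}$-scheme---$\psi_\ga$ sends $E$ to $\ga^{-1}\otimes E$---so "$\per{E}{\gf}$" is not defined; the paper works on the coarse space $\sL$ and proves $\perfl{\sL}{\gf}=\sL[\gf]_\red$ in proposition~\ref{pro:lattes-periodic-torsion}.)

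Two further points. First, you assume a CM elliptic curve exists over $\maxO_H$; in general none has everywhere good reduction, and the paper must cover $\maxO_K$ by two localizations $\maxO_K[\gq^{-1}]$ (Shimura's theorem, via Gurney) and glue. Second, and most seriously, you explicitly defer the key step of (iii) in case (3)---the integral, $\Lambda$-equivariant identification of the normalization with $\rcl{\maxO_K,\mx_K}{\gf}$---to an unproved "integral $\Lambda$-equivariant refinement of Shimura--Taniyama." The paper does not need such a refinement: it proves that $\sheafO(E[\gp^r])$ is $\Lambda_\gp$-normal by a purely local valuation-theoretic argument with Lubin--Tate polynomials (proposition~\ref{pro:local-cm-maximality}, the estimate $a\geq 1/(q-1)$), globalizes by proposition~\ref{pro:maximality-facts}, takes $G$-invariants (proposition~\ref{pro:cm-maximality}), and identifies the generic fiber with the free $\DR_{\Pr}(\gf)$-set by the lattice computation of proposition~\ref{pro:lattes-torsor}. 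As written, your proposal identifies the main obstacle correctly but does not overcome it, so the proof is incomplete precisely where the theorem is hardest.
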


The $\Lambda$-scheme $\P^1_{\maxO_H}$ in the imaginary quadratic context plays the role of the target of the
Weber functions $E\to E/\Aut(E)\cong\P^1_\C$ in the traditional treatments of explicit class field theory of
imaginary quadratic fields. There are, however, a number of subtleties in constructing this $\Lambda$-structure.
For instance, CM elliptic curves are only defined over $H$ and not $K$, there can be more than one of them, but
it might be that none of them has good reduction everywhere. These problems were completely clarified in
Gurney's thesis~\cite{Gurney:thesis}. He also gave an account of class field theory for imaginary quadratic
fields from the point of view of the $\Lambda$-structure on $\P^1_{\maxO_H}$, but he stopped at considering the
field extensions generated by the preimages $\psi_\gf^{-1}(\infty)$. In either approach, the package of elliptic
curves with complex multiplication and their Weber functions is replaced by the single
$\Lambda_{\maxO_K,\mx_K}$-scheme $\P^1_{\maxO_H}$. We emphasize that $\P^1_{\maxO_H}$ is not so interesting from
a cohomological or motivic point of view---there are no interesting Galois representations in the cohomology.
All the richness is in the $\Lambda$-structure.

It would be interesting to know if something similar holds for other number fields, starting with CM fields. One
virtue of the questions above is that they allow for negative answers, which
would also be interesting. In the final section, we will raise some further questions in this direction.

\vspace{2mm}
It is a pleasure to thank Lance Gurney and Hendrik Lenstra for several helpful discussions.

\part{Finite $\Lambda$-rings and class field theory}

\section{Maximal $\Lambda$-orders}
\label{sec:max-model}

\subsection{} \emph{Maximal $\Lambda$-orders.}
Let $\scalA'$ be a sub-$\scalA$-algebra of $K$, and
let $E$ be a finite $K$-algebra with $\Lambda_{\scalA,\Pr}$-structure. A sub-$\scalA'$-algebra $B\subseteq E$
with a $\Lambda_{\scalA,\Pr}$-structure is said to be a \emph{$\Lambda_{\scalA,\Pr}$-order (over $\scalA'$)}
if it is finite over $\scalA'$. We say it is \emph{maximal (in $E$)} if it contains all others.
Maximal $\Lambda_{\scalA,\Pr}$-orders always exist, by an elementary argument 
(\cite{Borger-deSmit:Integral-models}, prop.\ 1.1), since maximal orders in the ordinary sense exist
and since $\scalA'$ is noetherian.

We will say a finite flat $\scalA'$-algebra $B$ with a $\Lambda_{\scalA,\Pr}$-structure is \emph{normal} (or
\emph{$\Lambda_{\scalA,\Pr}$-normal over $\scalA'$}) if it is maximal in $K\otimes_{\scalA'} B$, in the sense 
above.

We will need the following basic facts:

\begin{proposition}\label{pro:maximality-facts}
	Let $B$ be a finite flat $\scalA'$-algebra with $\Lambda_{\scalA,\Pr}$-structure.
	\begin{enumerate}
		\item $B$ is $\Lambda_{\scalA,\Pr}$-normal over $\scalA'$ if 
		$\maxO_{K_\gp}\otimes_{\scalA'}B$ is $\Lambda_{\maxO_{K_\gp},\gp}$-normal over $\maxO_{K_\gp}$ 
		for all	maximal ideals $\gp$ of $\scalA'$. 
		\item If $B$ is $\Lambda_{\scalA,\Pr}$-normal over $\scalA'$, and $G$ is a group acting on $B$ by 
		$\Lambda_{\scalA,\Pr}$-automorphisms, then the invariant subring $B^G$ is $\Lambda_{\scalA,\Pr}$-normal 
		over $\scalA'$.
		\item If $B$ is a product $B_1\times B_2$ of $\Lambda_{\scalA,\Pr}$-rings, then $B$
		is $\Lambda_{\scalA,\Pr}$-normal over $\scalA'$ if and only if $B_1$ and $B_2$ are.
	\end{enumerate}
\end{proposition}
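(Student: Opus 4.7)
For part (1), my plan is to use the local-global principle that a finitely generated $\scalA'$-module is zero if and only if its completion at every maximal ideal of $\scalA'$ vanishes. Given any $\Lambda_{\scalA,\Pr}$-order $C$ in $E := K\otimes_{\scalA'} B$ containing $B$, the quotient $C/B$ is finitely generated over $\scalA'$ and is killed by tensoring with $K$. It therefore suffices to show that $\maxO_{K_\gp} \otimes_{\scalA'} (C/B) = 0$ for every maximal ideal $\gp$ of $\scalA'$. Flatness of $\scalA' \to \maxO_{K_\gp}$ preserves the inclusion $B \hookrightarrow C$, and the relevant Frobenius lift at $\gp$ extends by base change, yielding an inclusion of $\Lambda_{\maxO_{K_\gp},\gp}$-orders in $K_\gp \otimes_{\scalA'} B$. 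The hypothesis then forces equality locally, hence globally.

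For part (2), I plan to form the ``compositum'' of $B$ and $C$ inside $K \otimes_{\scalA'} B$. Suppose $C$ is a $\Lambda_{\scalA,\Pr}$-order containing $B^G$ in $K \otimes_{\scalA'} B^G$, which I regard as a subring of $K \otimes_{\scalA'} B$. Let $B'$ be the $\scalA'$-subalgebra of $K \otimes_{\scalA'} B$ generated by $B$ and $C$. Then $B'$ is finite over $\scalA'$ (generated by a product of finite modules), stable under each $\psi_\gp$ (which preserves both $B$ and $C$), and satisfies the Frobenius-lift condition at $\gp$ because $x \mapsto x^{N(\gp)}$ is a ring endomorphism modulo $\gp$, so the lift property passes from generators to the ring they generate. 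Thus $B'$ is a $\Lambda$-order containing $B$; normality of $B$ forces $B' = B$, so $C \subseteq B$. The final step is the elementary observation that $B \cap (K \otimes_{\scalA'} B^G) = B^G$: extending the $G$-action to $K \otimes_{\scalA'} B$ by $K$-linearity makes $K \otimes_{\scalA'} B^G$ the $G$-fixed subring (using flatness), so any element of $B$ inside $K \otimes_{\scalA'} B^G$ is $G$-fixed in $B$ and hence lies in $B^G$.

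For part (3), the key observation is that since $B = B_1 \times B_2$ is a \emph{product} of $\Lambda_{\scalA,\Pr}$-rings, each $\psi_\gp$ is diagonal, and in particular the primitive idempotents $e_1 = (1,0)$ and $e_2 = (0,1)$ are $\psi_\gp$-fixed. For the direction $(\Leftarrow)$, if $B_1, B_2$ are normal and $C \supseteq B$ is a $\Lambda$-order in $K \otimes_{\scalA'} B$, then $C = e_1 C \times e_2 C$, and each $e_i C$ is a $\Lambda$-order in $K \otimes_{\scalA'} B_i$ containing $B_i$, hence equals $B_i$ by maximality. For the direction $(\Rightarrow)$, if $B$ is normal and $C_1 \supseteq B_1$ is a $\Lambda$-order in $K \otimes_{\scalA'} B_1$, then $C_1 \times B_2$ is a $\Lambda$-order containing $B$, hence equals $B$, giving $C_1 = B_1$; the symmetric argument handles $B_2$.

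The only step with any real content is the Frobenius-lift verification in part (2)---that the $\scalA'$-subalgebra generated by two $\Lambda$-stable subrings carrying compatible Frobenius lifts inherits the Frobenius-lift property. This reduces to the elementary fact that mod $\gp$ the $N(\gp)$-power map is a ring endomorphism; the remainder of the argument is routine bookkeeping about flatness, completions at maximal ideals, and idempotent decomposition.
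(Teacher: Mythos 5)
Your proposal is correct, and parts (1) and (3) follow essentially the paper's own argument (base change to $\maxO_{K_\gp}$ and idempotent/pushout decomposition, respectively); you additionally write out the ``only if'' direction of (3), which the paper's proof leaves implicit. The genuine divergence is in part (2). The paper defines a maximal $\Lambda_{\scalA,\Pr}$-order in $E$ as one that \emph{contains all others}, not merely one that is maximal among orders containing it. With that definition the argument is immediate: $C\subseteq K\otimes_{\scalA'}B^G\subseteq K\otimes_{\scalA'}B$ is itself a $\Lambda_{\scalA,\Pr}$-order in $K\otimes_{\scalA'}B$, so normality of $B$ gives $C\subseteq B$ directly, and then $C\subseteq B\cap(K\otimes_{\scalA'}B^G)=B^G$. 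Your compositum construction $B'=\langle B,C\rangle$ is a valid workaround for the weaker reading of maximality (greatest versus maximal element), and the verification that the Frobenius-lift condition passes to the generated subring --- via the observation that the locus $\{x:\psi_\gp(x)\equiv x^{N(\gp)}\bmod\gp B'\}$ is the equalizer of two ring homomorphisms into $B'/\gp B'$ and hence a subring --- is sound; it just does more work than the paper's definition requires. What your route buys is robustness: it would survive even if one only knew $B$ to be maximal among orders containing $B$. What the paper's route buys is brevity, at the cost of leaning on the stronger definition. Your careful justification of $B\cap(K\otimes_{\scalA'}B^G)=B^G$ via flatness is a detail the paper passes over quickly, and is worth having.
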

\begin{proof}

(1): Let $C\subseteq K\otimes_{\scalA'}B$ be a
$\Lambda_{\scalA,\Pr}$-order over $\scalA'$
containing $B$. Then for any maximal ideal $\gp\subset \scalA'$,
the base change $\maxO_{K_\gp}\otimes_{\scalA'}C$
is a $\Lambda_{\maxO_{K_\gp},\gp}$-order over $\maxO_{K_\gp}$. Therefore
it agrees with $\maxO_{K_\gp}\otimes_{\scalA'}B$. Since this holds for all maximal ideals $\gp\subset\scalA'$,
the ring $C$ agrees with $B$. Therefore $B$ is maximal.

(2): Let $C\subseteq K\otimes_{\scalA'}B^G$ be a $\Lambda_{\scalA,\Pr}$-order over $\scalA'$ containing $B^G$. Then $C$ is contained in $K\otimes_{\scalA'}B$ and hence, by the maximality of $B$, is contained in $B$. But since $C$ is contained in $K\otimes_{\scalA'}B^G$,
it is also $G$-invariant. Therefore we have $C\subseteq B^G$.

(3): Let $C\subseteq K\otimes_{\scalA'}(B_1\times B_2)$ be a
$\Lambda_{\scalA,\Pr}$-order over $\scalA'$
containing $B_1\times B_2$. Put $C_i=C\otimes_{B_1\times B_2} B_i$, for $i=1,2$.
Then each $C_i$ is a $\Lambda_{\scalA,\Pr}$-order over $\scalA'$ in $K\otimes_{\scalA'}B_i$.
By the maximality of $B_i$, we have $C_i=B_i$. Therefore we have
$$
C = C_1\times C_2 = B_1\times B_2.
$$
Thus $B_1\times B_2$ is maximal.
\end{proof}

\section{The local case}
\label{sec:local}

In this section, $\scalA$ will be a complete discrete valuation ring with maximal
ideal $\gp$, and $\Pr$ will be the singleton set
$\{\gp\}$. So $\Id_{\Pr}$ is the multiplicative monoid
of all nonzero ideals of $\scalA$.
Write $k=k(\gp)$ and $\Lambda_{\scalA,\gp}=\Lambda_{\scalA,\Pr}$.  

\begin{proposition}
\label{unramified}
Let $\ringb$ be a finite \'etale $\scalA$-algebra. 
Then $\ringb$ has a unique
$\Lambda_{\scalA,\gp}$-structure, and the induced action of $\G\times \Id_{\Pr}$ on
$S=\Hom_{K\alg}(\ringb\otimes_\scalA K, \Kb)$ has the
property that the inertia group $\I$ acts trivially and that the element
$\gp\in\Id_{\Pr}$ and the Frobenius element $F\in\G/\I$ act on $S$ in the same way.
\end{proposition}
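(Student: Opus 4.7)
The plan is to exploit the étaleness of $\ringb/\scalA$ twice: once to lift the residue Frobenius uniquely, and once to realize the $\G$-set $S$ as the $\G/\I$-set of residue-field embeddings.

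First, I would establish the existence and uniqueness of the Frobenius lift. Since $\ringb$ is finite étale over the complete local ring $\scalA$, the reduction functor induces a bijection between $\scalA$-algebra endomorphisms of $\ringb$ and $k$-algebra endomorphisms of $\bar{\ringb}\coloneqq \ringb/\gp\ringb$ (this is the standard unique-lifting property for étale morphisms along a nilpotent-or-complete thickening, combined with formal smoothness plus a Nakayama/Henselian argument to pass from the formal lift to an actual endomorphism). Consequently the residue Frobenius $F_\gp\colon\bar\ringb\to\bar\ringb$, $x\mapsto x^{N(\gp)}$, has a unique lift $\psi_\gp\in\End_{\scalA\alg}(\ringb)$. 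Since $\Pr=\{\gp\}$, the commutation axiom is vacuous, so $\psi_\gp$ is the unique $\Lambda_{\scalA,\gp}$-structure on $\ringb$.

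Next, I would recast $S$ in residue-field terms. Because $\ringb$ is finite étale over $\scalA$, every $K$-algebra map $\ringb\otimes_\scalA K\to\Kb$ factors through $\maxO_{\Kb}$, so $S=\Hom_{\scalA\alg}(\ringb,\maxO_{\Kb})$. Reduction modulo the maximal ideal of $\maxO_\Kb$ gives a map
\[
\rho\colon S\longrightarrow \bar S\coloneqq \Hom_{k\alg}(\bar\ringb,\bar k),
\]
which is bijective by the same étale lifting principle applied to the Henselian pair $(\maxO_{\Kb}, \gm_{\Kb})$ (equivalently, by the equivalence between finite étale $\scalA$-algebras and finite étale $k$-algebras). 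Under $\rho$ the $\G$-action on $S$ (post-composition with automorphisms of $\Kb$) corresponds to the natural $\Gal(\bar k/k)$-action on $\bar S$ via the surjection $\G\twoheadrightarrow \G/\I=\Gal(\bar k/k)$; in particular $\I$ acts trivially on $S$, which is the first assertion.

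Finally I would compare the two actions on $\bar S$. The action of $\gp\in\Id_\Pr$ on $S$ is by pre-composition with $\psi_\gp$, so under $\rho$ it becomes pre-composition with $F_\gp$ on $\bar\ringb$. For any $\bar s\in\bar S$ and $x\in\bar\ringb$,
\[
(\bar s\circ F_\gp)(x)=\bar s(x^{N(\gp)})=\bar s(x)^{N(\gp)}=F\bigl(\bar s(x)\bigr),
\]
so pre-composition with $F_\gp$ agrees with post-composition by the arithmetic Frobenius $F\in\Gal(\bar k/k)$. This is exactly the claim that $\gp$ and $F$ act the same way on $S$. The main (minor) obstacle is the étale-lifting step for $\End_{\scalA\alg}(\ringb)\cong\End_{k\alg}(\bar\ringb)$; once that is in hand, the rest is a direct unpacking of the Galois-theoretic dictionary.
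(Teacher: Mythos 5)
Your proposal is correct and rests on the same underlying fact as the paper's proof, namely the unramified/\'etale lifting theory over the complete local base $\scalA$: the paper makes this concrete by lifting idempotents to write $\ringb$ as a product of rings of integers of unramified extensions of $K$ and then citing the classical uniqueness of the Frobenius on each factor, whereas you invoke the equivalence between finite \'etale $\scalA$-algebras and finite \'etale $k$-algebras, applied once to endomorphisms and once to points valued in $\maxO_{\Kb}$. Your write-up is, if anything, more explicit than the paper's on the last two assertions (triviality of the $\I$-action and the agreement of $\gp$ with $F$), which the paper leaves largely implicit.
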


\begin{proof}
Because $\ringb$ is \'etale, $k\otimes_\scalA \ringb$ is a product
of finite fields.  Since $\ringb$ is complete in its $\gp$-adic topology,
idempotents of $\ringb/\gp \ringb$ lift to $\ringb$, so that $\ringb$ is a finite product
of rings of integers in finite unramified extensions of $K$. 
Therefore
the inertia group $\I\subseteq \G$ acts trivially on  $S=\Hom_{\scalA\alg}(\ringb,\Kb)$.  Every
finite unramified field extension $\L$ of $K$ is Galois with an abelian
Galois group, and its rings of integers has a unique Frobenius lift.
It follows that when $\ringb$ is unramified over $\scalA$, it has a
unique $\Lambda_{\scalA,\gp}$-structure. 
\end{proof}

\subsection{} \label{localstructure}
\emph{Structure of finite \'etale $K$-algebras with $\Lambda_{\scalA,\gp}$-structure.}
Let $\E$ be a finite \'etale $K$-algebra with a $\Lambda_{\scalA,\gp}$-structure,
and write 
	$$
	S=\Hom_{K\alg}(\E,\Kb).
	$$
Put $S_0=\bigcap_{n\geq 0}\gp^n S$ and for $i=1,2, \ldots$, let
	$$
	S_i = \{s\in S\colon\; s\not\in S_{i-1} \text{ and } \gp s\in S_{i-1}\}.
	$$
Then each $S_i$ is a sub-$\G$-set of $S$.
Since $S$ is finite, there exists an $n\geq 0$ such that  $S_{n+1}=\varnothing$.
Then we have the decomposition
	$$
	S= S_0\smcoprod S_1 \smcoprod \cdots \smcoprod S_n.
	$$
Let $\E_i=\Map_{\G}(S_i, \Kb)$ be the corresponding finite \'etale
$K$-algebra for each $i$. Then the maps $S_n\to \cdots \to S_1\to S_0\righttoleftarrow$
given by multiplication by $\gp$
give rise to a diagram of $K$-algebras
\[
	f_0 \lefttorightarrow
	\E_0 {\buildrel f_1 \over\longrightarrow}
	\E_1 {\buildrel f_2 \over\longrightarrow}
	\cdots
	{\buildrel f_{n} \over\longrightarrow}
	\E_n
\]
Since $S=S_0\smcoprod S_1 \smcoprod \cdots\smcoprod S_n$ is a
decomposition of $G$ as a $\G$-set, we have a corresponding
product decomposition of the finite \'etale $K$-algebras
$\E=\E_0\times \E_1 \times \cdots\times \E_n$. 
In terms of this decomposition, $\psi_p$ is given by
	\begin{equation} \label{psicomp}
		\psi_\gp(e_0,e_1, \ldots, e_n)=(f_0(e_0),f_1(e_0),\ldots, f_{n}(e_{n-1})).
	\end{equation}
Since $S_0$ is closed under multiplication by $\gp$, the quotient ring
$\E_0$ of $\E$ is a quotient $\Lambda_{\scalA,\gp}$-ring of $\E$, with Frobenius lift
$f_0$. 

We will now construct a splitting of this quotient map $\E\to \E_0$ in the category
of $\Lambda_{\scalA,\gp}$-rings.
Note that we have $\gp^kS=S_0$ for sufficiently large
$k$; so we have $\gp S_0=S_0$ and hence $\gp$ act as a bijection on $S_0$. Thus, $f_0$ is an
automorphism of $\E_0$.
For $s\in S_i$ we have $\gp^is\in S_0$. Again since $\gp$ acts bijectively on
$S_0$, we can define a map 
$S\longmap S_0$ by sending $s\in S_i$ to 
the unique element $s'$ of $S_0$ such that $\gp^i s' = \gp^i s$.
This map commutes with the $\G\times \Id_{\Pr}$-action, and it is a retraction of
the inclusion $S_0\to S$. This induces our desired splitting $\E_0\to E$.
In other words, $\E_0$ is not only a quotient
 $\Lambda_{\scalA,\gp}$-ring of $\E$, but also a sub-$\Lambda_{\scalA,\gp}$-ring:
\begin{align*}
	j\colon\; \E_0  \longmap  & \E \\
	e_0  \longmapsto & (e_0,f_1f_0^{-1}e_0, f_2f_1f_0^{-2} e_0, \ldots,
	f_{n-1} \cdots f_1 f_0^{-n+1} e_0).
\end{align*}

\subsection{} \emph{Proof of theorem \ref{local-thm}.}
Suppose that $\E$ has an integral $\Lambda_{\scalA,\gp}$-model $\ringb$, that is, 
\begin{enumerate}
	\item[(i)] $\ringb$ is finitely generated as an $\scalA$-module and has rank $\dim_{K}(\E)$,
	\item[(ii)] $\psi_\gp(\ringb)\subseteq \ringb$,
	\item[(iii)] $\psi_\gp\otimes_\scalA k$ is the Frobenius map $x\mapsto x^{N(\gp)}$
		on $\ringb\otimes_A k$.
\end{enumerate}

Let $\ringb_0$ denote the image of $\ringb$ under the quotient map $\E\to\E_0$
(in the notation of \ref{localstructure}). Then $\ringb_0$ is an
integral $\Lambda_{\scalA,\gp}$-model of $\E_0$. Since $f_0$ is an automorphism of $\E_0$, the ring $\ringb_0$
and its subring $f_0(\ringb_0)$ have the same discriminant. Thus,
$f_0(\ringb_0)=\ringb_0$ and hence $f_0$ is an automorphism of $\ringb_0$.  This implies
that the map $x\mapsto x^{N(\gp)}$ on $\ringb_0\otimes_\scalA k$ is an
automorphism, and so $\ringb_0$ is \'etale over $\scalA$.  Conditions (1)
and (2) of theorem \ref{local-thm} now follow by proposition
\ref{unramified}.

For the converse, suppose that conditions (1) and (2) hold.
We will produce an integral $\Lambda_{\scalA,\gp}$-model of $\E=\E_0\times \cdots\times \E_n$.
Let $R_i$ be the integral closure of $\scalA$ in $\E_i$.
Since $\I$ acts trivially on $S_\unr=S_0$, the $A$-algebra $R_0$ is finite \'etale and hence has a
unique $\Lambda_{\scalA,\gp}$-structure by proposition \ref{unramified}. 
Our integral model $\ringb\subseteq E$ will be of the form
\begin{equation}
	\label{eq:lambda-split}
\ringb = j(R_0) \oplus  (0\times \ga_1\times \cdots\times\ga_n),
\end{equation}
where each $\ga_i$ is an ideal in $R_i$ and $j$ is the map defined in~\ref{localstructure}. 
Observe that any $\ringb$ of this form is a subring of $E$.
For it to be a sub-$\Lambda_{\scalA,\gp}$-ring, we need to have $\psi_\gp(a)\equiv a^{N(\gp)}\bmod \gp \ringb$
for all $a\in \ringb$. Since both sides of this congruence are additive in $a$, it is enough to consider elements $a$ in each of the summands in~(\ref{eq:lambda-split}).
It holds for the summand $j(R_0)$ because $j$ is a $\Lambda_{\scalA,\gp}$-morphism.
So by (\ref{psicomp}), a sufficient condition for $\ringb$ to be a sub-$\Lambda_{\scalA,\gp}$-ring is
$f_i(\ga_{i-1})\subseteq \gp\ga_{i}$ and
$\ga_i^{N(\gp)}\subseteq\gp\ga_i$ for $i=1,\dots,n$, where we take $\ga_0=0$. This holds if, for instance,
$\ga_i=\gp^{n-i+1}R_i$. So for this choice, $\ringb$ is an integral $\Lambda_{\scalA,\gp}$-model of $\E$.
\qed

\subsection{} \emph{Remark.}
\label{rmk-group-alg}
Note that the integral model produced in the proof above is not always the
maximal one. For instance, if $C_n$ denotes a cyclic group of order $n$, then on
the group algebra $\Q_2[C_4]$ with its usual
$\Lambda_{\Z_2,2}$-structure, the proof produces a strict
subring of $\Z_2[C_4]$ and hence cannot be not maximal. (In fact, $\Z_2[C_4]$ is the maximal
integral model, as is shown in~\cite{Borger-deSmit:Integral-models}).

It can also happen that, for group algebras, the integral
model supplied by the proof is strictly larger than 
the integral group algebra. An example is $\Q_2[C_2\times C_2]$.

\subsection{} \emph{Remark.}
It is possible to express theorem \ref{local-thm} in a more Galois-theoretic way, similar
to the statement of theorem \ref{global-thm-intro}. We can define an inverse system
of finite quotients $G_{N,n}$ of the monoid $\G\times\Id_{\Pr}$ with
the property that $\E$ has an integral $\Lambda_{\scalA,\gp}$-model
if and only if the action of $\G\times \Id_{\Pr}$ on $S$ factors through some $G_{N,n}$.

The quotients $G_{N,n}$ are defined as follows.
Let $N$ be an open normal subgroup of $\G$, and let $n$ be an integer $\geq 0$.
Define a relation on $\G\times \Id_{\Pr}$ by $(g,\gp^a)\sim (h,\gp^b)$
if either or both of the following conditions hold:
	\begin{enumerate}
		\item $a=b$ and $g\equiv h \bmod N$
		\item $a,b\geq n$ and $g F^a\equiv h F^{b} \bmod N\I$.
	\end{enumerate}
This is easily seen to be an equivalence relation which is stable under the monoid
operation. We then define $G_{N,n}$ to be the quotient monoid. Observe that we have
a decomposition of $\G$-sets:
\begin{equation}
	\label{eq:G_N,n}
	G_{N,n} = \underbrace{\G/N \spsmcoprod \cdots \spsmcoprod \G/N}_{n\text{ times}} \spsmcoprod \G/N\I.
\end{equation}

For $N'\subseteq N$ and $n'\geq n$, we have evident transition
maps $G_{N',n'} \to G_{N,n}$. If we consider the inverse limit
	$$
	\hat{G}=\lim_{N,n} G_{N,n},
	$$
then the re-expression of theorem \ref{local-thm} is that $\E$ has an integral $\Lambda_{\scalA,\gp}$-model
if and only if the action of $\G\times\Id_{\Pr}$ on $S$ factors (necessarily uniquely) through a continuous
action of $\hat{G}$. One might call $\hat{G}$ the $\Lambda_{\scalA,\gp}$-algebraic fundamental monoid of 
$\Spec \maxO_K$ with ramification allowed along $\Spec k$.

\subsection{} \emph{Remark.} 
In the global case considered in the rest of this paper, we will see that only abelian field extensions arise
from integral $\Lambda$-models. But in the local case, we can get nonabelian extensions. In fact, we can get
arbitrary extensions. Indeed, any given extension $L$ of $K$ is a direct factor of the $K$-algebra $L_\unr\times
L$, which has a $\Lambda_{\scalA,\gp}$-structure admitting an integral model. For instance, one can take
$\psi_\gp(e_0,e_1)=(F(e_0),e_0)$.

\section{The \rcm monoid}
\label{sec:DR}
Fix the following notation:
	\begin{align*}
		K				&= \text{a finite extension of $\Q$} \\
		\maxO_K			&= \text{the ring of integers in $K$} \\
		\mx_K			&= \text{the set of maximal ideals of $\maxO_K$} \\
		\scalA			&= \text{a Dedekind domain whose fraction field is $K$} \\
		\Pr				&= \text{a set of maximal ideals of $\scalA$} \\
		\Id_{\Pr}		&= \text{the monoid of nonzero ideals of $\scalA$ supported at $\Pr$} \\
		\gf				&= \text{a cycle (or modulus) on $K$} \\
		\gf_{\Pr}		&= \text{the part of $\gf$ supported at $\Pr$} \\
		\gf^{\Pr}		&= \text{the part of $\gf$ supported away from $\Pr$} \\
		\gf_{\fin}		&= \text{the part of $\gf$ supported at the finite primes} \\
		\gf_{\infty}	&= \text{the part of $\gf$ supported at the real places} \\
		\Id_{\Pr}(\gf)	&= \text{the submonoid of $\Id_{\Pr}$ of ideals prime to $\gf_{\fin}$} \\
		\Cl(\gf)		&= \text{the ray class group of $K$ of conductor $\gf$} \\
		\Cl_{\Pr}(\gf)	&= \text{the image of the canonical map $\Id_{\Pr}(\gf)\to \Cl(\gf)$} \\
		R^\circ			&= R \text{ viewed as a monoid under multiplication, for any ring }R 
	\end{align*}
Observe that, up to canonical isomorphism, the constructions above depend only on the places
of $K$ corresponding to $\Pr$, and so they depend on $\scalA$ only in that these places must come from
maximal ideals of $\scalA$. Therefore we can take $\scalA=\maxO_K$ without changing
anything above. 

\subsection{} \emph{Structure of the \rcm monoid.}
There is a bijection 
	\begin{equation} 
	\coprod_{\gd\in\Id_{\Pr},\gd\mid\gf} \Cl_{\Pr}(\gf\gd^{-1}) \longrightisomap \DR_{\Pr}(\gf),
	\end{equation}
sending an ideal class $[\ga]\in \Cl_{\Pr}(\gf\gd^{-1})$ in the summand of index $\gd$ to the class
$[\gd\ga]\in\DR_{\Pr}(\gf)$.
Thus we have
	\begin{equation} 
	\label{DR-decomp}
	\coprod_{\gd\in\Id_{\Pr},\gd\mid\gf} [\gd]\Cl_{\Pr}(\gf\gd^{-1}) = \DR_{\Pr}(\gf).
	\end{equation}
The multiplication law on $\DR_{\Pr}(\gf)$ is given in terms of the left-hand side by the formula 
	\begin{equation}
		\label{eq:DR-mult}
	[\gd][\ga]\cdot [\gd'][\ga'] = [\gd''][\ga''],
	\end{equation}
where $\gd''=\gcd(\gd\gd',\gf)$ and $\ga''$ satisfies $\gd''\ga''=\gd\ga\gd'\ga'$.
It follows, for example, that the submonoid $\DR_{\Pr}(\gf)^*\subseteq \DR_{\Pr}(\gf)$ of invertible elements 
agrees with the part of the ray class group supported at $\Pr$:
	$$
	\DR_{\Pr}(\gf)^*=\Cl_{\Pr}(\gf).
	$$

When $\Pr$ is Chebotarev dense, we have $\Cl_{\Pr}(\gf)=\Cl(\gf)$, and so
the invertible part of $\DR_{\Pr}(\gf)$ is independent of $\Pr$. Observe however that this is not the case for 
the noninvertible part. For example,
if $K=\Q$ and $\gf=(n)\infty$, then we have
	\begin{equation}
	\label{eq:DR-over-Q}
		\DR_{\Pr}(\gf) = (\Z/n_{\Pr}\Z)^{\circ} \times
			(\Z/n^{\Pr}\Z)^*,
	\end{equation}
where $n_{\Pr}$ is the factor of $n$ supported at $\Pr$ and $n^\Pr$ is that supported
away from $\Pr$. The invertible part is $(\Z/n_{\Pr}\Z)^* \times (\Z/n^{\Pr}\Z)^* = (\Z/n\Z)^*$, which
does not depend on $\Pr$, but the non-invertible part does.

\subsection{} \emph{Change of conductor.}
\label{subsec:DR-change-of-f}
Suppose $\gf\mid\gf'$. Then we have the implication
	$$
	\gb\sim_{\gf'}\gc \Rightarrow \gb\sim_{\gf}\gc.
	$$
which induces a monoid map
	\begin{equation}
		\DR_\Pr(\gf')\to \DR_{\Pr}(\gf),
	\end{equation}
which we call the canonical map.

There is also a map in the other direction. Write $\gf'=\gf\ga$.
Then by the equivalence 
	$$
	\gb\sim_\gf\gc \Leftrightarrow \ga\gb\sim_{\ga\gf}\ga\gc,
	$$
there is a well-defined injective map
	\begin{equation}
		\DR_\Pr(\gf) \longmap \DR_\Pr(\gf'), \quad [\gb]\mapsto [\ga\gb].
	\end{equation}
It is not a monoid map, but it is an equivariant map of $\DR_{\Pr}(\gf')$-sets.
The composition with the canonical projection $\DR_\Pr(\gf')\to \DR_{\Pr}(\gf)$
on either the left or the right, is given by multiplication by the class of $\ga$. 

\vspace{5mm}

We conclude this section with two more descriptions of $\gf$-equivalence and the \rcm monoid, although they only
have small parts in this paper. The first is the one that appears in Deligne--Ribet~\cite{Deligne-Ribet}, and a
form of the second was pointed out to us by Bora Yalkinoglu.

\begin{proposition}
\label{pro:DR-original-def}
	Two ideals $\ga,\gb$ of $\maxO_K$ are $\gf$-equivalent if and only if $\ga=x\gb$
	for some element $x\in 1+\gf_{\fin}\gb^{-1}$ which is positive at all real places dividing $\gf$.
\end{proposition}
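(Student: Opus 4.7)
The plan is to translate both sides of the claim into prime-by-prime statements in terms of the valuations $a = \ord_\gp(\ga)$, $b = \ord_\gp(\gb)$, and $v = \ord_\gp(\gf_{\fin})$, and to check their equivalence by case analysis on the sign of $v - b$. The archimedean sign conditions coincide on both sides verbatim, so all of the real work is at the finite primes.

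First I would recall the classical definition of $\Cl(\gf\gd^{-1})$: ideals coprime to $\gf_{\fin}\gd^{-1}$, modulo principal ideals generated by elements $y$ with $y \equiv 1 \pmod{\gf_{\fin}\gd^{-1}}$ and $y$ positive at the real places of $\gf$. Unwinding the paper's definition of $\gf$-equivalence, the statement $\ga \sim_{\gf} \gb$ becomes: $\gd := \gcd(\ga, \gf_{\fin}) = \gcd(\gb, \gf_{\fin})$, together with the existence of $x \in K^*$ such that $\ga = x\gb$, $x - 1 \in \gf_{\fin}\gd^{-1}$, and $x$ has the correct sign at infinity. The proposition then asks us to show that this pair of conditions is equivalent to the single condition $\ga = x\gb$ with $x \in 1 + \gf_{\fin}\gb^{-1}$ (and the same sign condition).

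Since $\gd \mid \gb$ there is an inclusion of fractional ideals $\gf_{\fin}\gb^{-1} \subseteq \gf_{\fin}\gd^{-1}$, so $x \in 1 + \gf_{\fin}\gb^{-1}$ trivially gives the ray class congruence. To argue in the other direction, I would work prime by prime. At $\gp \nmid \gf_{\fin}$ the required inequality $\ord_\gp(x - 1) \geq -b$ follows from $(x) = \ga\gb^{-1}$ together with the integrality of $\ga$, giving $\ord_\gp(x) \geq -b$ and hence $\ord_\gp(x - 1) \geq \min(\ord_\gp(x), 0) \geq -b$. At $\gp \mid \gf_{\fin}$, I split on whether $b \geq v$ or $b < v$: in the first case $\gf_{\fin}\gb^{-1}$ and $\gf_{\fin}\gd^{-1}$ agree at $\gp$ when $b=v$, and when $b>v$ the required inequality is automatic once the coprimality condition forces $a \geq v$; in the second case the coprimality condition forces $a = b$, and then $\ord_\gp(\gf_{\fin}\gb^{-1}) = v - b = \ord_\gp(\gf_{\fin}\gd^{-1})$ so the two congruences coincide. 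For the forward direction, the same case split shows that $x - 1 \in \gf_{\fin}\gb^{-1}$ together with $\ga = x\gb$ forces the GCD equality $\min(v,a) = \min(v,b)$: in the regime $b < v$ the congruence forces $\ord_\gp(x) = 0$ so $a = b$, and in the regime $b \geq v$ the inequality forces $a \geq v$ whenever $a < b$.

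The main obstacle is essentially bookkeeping---verifying that the single clean membership $x - 1 \in \gf_{\fin}\gb^{-1}$ encodes both the GCD equality and the ray class congruence simultaneously, and in particular that the stronger $\gb^{-1}$-version (as opposed to $\gd^{-1}$) exactly absorbs the GCD condition. Once the case analysis is organized around the two regimes $b \geq v$ and $b < v$, each case is a one-line valuation inequality; the art is in stating the cases so that both directions of the ``if and only if'' fall out of the same local calculation.
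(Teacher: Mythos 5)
Your argument is correct and follows essentially the same route as the paper's proof: unwind the definition of $\gf$-equivalence via the ray class group, compare the valuations $n_\gp=\ord_\gp(\gf\gd^{-1})$ and $m_\gp=\ord_\gp(\gf\gb^{-1})$ prime by prime, and split into the cases $\ord_\gp(\gb)\geq\ord_\gp(\gf)$ and $\ord_\gp(\gb)<\ord_\gp(\gf)$. One small point in your favor is that you make explicit that the clean membership $x\in 1+\gf_{\fin}\gb^{-1}$ together with integrality of $\ga=x\gb$ already forces the GCD equality $\gcd(\gf_{\fin},\ga)=\gcd(\gf_{\fin},\gb)$; the paper invokes this GCD equality in one direction of the valuation estimate but leaves its recovery from the clean membership to the reader.
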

\begin{proof}
	Let $\gd=\gcd(\gf,\gb)$.
	By definition, we have $\ga\sim_\gf\gb$ if and only if there exists an element
	$x\in K^*$ satisfying the following:
	\begin{enumerate}
		\item $\ga\gd^{-1}=x\gb\gd^{-1}$
		\item $x\equiv 1\bmod \gp^{n_\gp}$, where $n_\gp=\ord_\gp(\gf\gd^{-1})$, whenever $n_\gp\geq 1$
		\item $x$ is positive at all real places dividing $\gf\gd^{-1}$.
	\end{enumerate}
	Observe that (1) is equivalent to $\ga=x\gb$ and that (3) is equivalent to the positivity condition in the
	statement of the proposition.
	It is therefore enough to show that, under (1),
	condition (2) is equivalent to $x\in 1+\gf_{\fin}\gb^{-1}$, or equivalently to the condition that
	that for all $\gp$, we have 
	$x\equiv 1\bmod \gp^{m_\gp}$, where $m_\gp=\ord_\gp(\gf\gb^{-1})$. So fix a prime $\gp$.
	In the case $\ord_\gp(\gf)\geq \ord_\gp(\gb)$, we have $n_\gp=m_\gp$ and so this condition is indeed
	equivalent to (2).
	
	Now consider the remaining case $\ord_\gp(\gf)<\ord_\gp(\gb)$. Then we have $n_\gp=0$ and $m_\gp<0$.
	Because $n_\gp=0$,
	condition (2) is vacuous. Therefore it is enough to show that $x\equiv 1\bmod \gp^{m_\gp}$ necessarily 
	holds.
	Since $\gcd(\gf,\gb)=\gcd(\gf,\ga)$, we have $\ord_\gp(\gf)<\ord_\gp(\ga)$ and hence
	$$\ord_\gp(x-1)\geq \min\{\ord_\gp(x),0\}= \min\{\ord_\gp(\ga\gb^{-1}),0\} \geq \min\{m_\gp,0\} = m_\gp.$$
\end{proof}

\begin{proposition}
	\label{pro:Bora-DR-definition}
	Assume that $\Pr$ is Chebotarev dense. 	Then there is an isomorphism
			\begin{equation}
			\label{map:bora}
				(\maxO_K/\gf_{\Pr})^{\circ}\oplus_{(\maxO_K/\gf_{\Pr})^*}
				\Cl(\gf)\longrightisomap\DR_{\Pr}(\gf),
			\end{equation}
	which is given by the canonical inclusion $\Cl(\gf)\to\DR_\Pr(\gf)$ on the second factor and which
	on the first factor sends the residue class of any element $x\in\maxO_K$ with $(x)\in\Id_\Pr$ to the
	class $[(x)]\in\DR_\Pr(\gf)$.
\end{proposition}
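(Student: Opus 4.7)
The plan is to define a monoid homomorphism $\Phi$ from the pushout to $\DR_\Pr(\gf)$ by the universal property, then verify bijectivity piece by piece under the decomposition (\ref{DR-decomp}) of $\DR_\Pr(\gf)$ over the divisors $\gd$ of $\gf_\Pr$.

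First I would construct, for each $\bar x\in(\maxO_K/\gf_\Pr)^\circ$, a lift $x\in\maxO_K$ with $(x)\in\Id_\Pr$, $x\equiv 1\pmod{\gf^\Pr_{\fin}}$, and $x>0$ at each real place in $\gf_\infty$. Start from an arbitrary lift $x_0$ of $\bar x$ satisfying the same finite congruences and signs by the Chinese remainder theorem; the part $I$ of $(x_0)$ supported outside $\Pr$ is automatically coprime to $\gf_{\fin}$. Chebotarev density of $\Pr$ applied to $\Cl(\gf)$ produces a prime $\gq\in\Pr$ coprime to $\gf_{\fin}$ with $[\gq]=[I]$ in $\Cl(\gf)$, so that $\gq I^{-1}=(w)$ for some $w\in K^\times$ automatically $\equiv 1\pmod{\gf_{\fin}}$ and positive at $\gf_\infty$; then $x:=x_0 w$ works. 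Setting $\phi_0(\bar x):=[(x)]\in\DR_\Pr(\gf)$ is well-defined: applying proposition~\ref{pro:DR-original-def} to $y:=x/x'$ for two such lifts, the condition $y\in 1+\gf_{\fin}(x')^{-1}$ splits prime by prime (primes in $\gf_\Pr$ are handled by $x\equiv x'\pmod{\gf_\Pr}$ and those in $\gf^\Pr_{\fin}$ by both being $\equiv 1$), and positivity is automatic. The same construction produces a map $\phi\colon(\maxO_K/\gf_\Pr)^*\to\Cl(\gf)$ sending $\bar u$ to $[(u)]$, and the two structure maps of the pushout tautologically agree on $(\maxO_K/\gf_\Pr)^*$ via this $\phi$, giving $\Phi$.

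Next, both sides decompose into $\gd$-pieces for $\gd\mid\gf_\Pr$, and $\Phi$ respects the decomposition. On the target the $\gd$-piece is $[\gd]\Cl(\gf\gd^{-1})$. On the source, the $\gd$-piece of $(\maxO_K/\gf_\Pr)^\circ$ is a single $(\maxO_K/\gf_\Pr)^*$-orbit whose stabilizer at any basepoint $\bar x_\gd$ is $\mathrm{Stab}(\bar x_\gd)=\ker\bigl((\maxO_K/\gf_\Pr)^*\twoheadrightarrow(\maxO_K/\gf_\Pr\gd^{-1})^*\bigr)$, so the $\gd$-piece of the pushout is $\Cl(\gf)/\phi(\mathrm{Stab}(\bar x_\gd))$. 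The restriction $\Phi_\gd$ is the composition of the natural surjection $\Cl(\gf)\to\Cl(\gf\gd^{-1})$ with multiplication by $[(x_\gd)\gd^{-1}]$, so it is an isomorphism exactly when $\phi(\mathrm{Stab}(\bar x_\gd))=\ker\bigl(\Cl(\gf)\to\Cl(\gf\gd^{-1})\bigr)$. The containment $\subseteq$ is immediate: a recipe lift $s$ of $\bar s\in\mathrm{Stab}(\bar x_\gd)$ satisfies $s\equiv 1\pmod{\gf_\Pr\gd^{-1}}$ and $s\equiv 1\pmod{\gf^\Pr_{\fin}}$, hence $s\equiv 1\pmod{\gf_{\fin}\gd^{-1}}$. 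For $\supseteq$, any class in the kernel is represented by $[(\alpha)]$ with $\alpha\equiv 1\pmod{\gf_{\fin}\gd^{-1}}$, positive, and coprime to $\gf_{\fin}$; then $\bar s:=\bar\alpha$ lies in $\mathrm{Stab}(\bar x_\gd)$, and the same comparison of $\alpha$ with any recipe lift of $\bar s$ yields $\phi(\bar s)=[(\alpha)]$.

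The main obstacle is the very first step---producing a lift $x$ with $(x)\in\Id_\Pr$ that also satisfies the congruences away from $\gf_\Pr$. It is here that we genuinely use Chebotarev density of $\Pr$ in the full ray class group $\Cl(\gf)$, not just in $\Cl(K)$, to find the auxiliary prime $\gq$. Everything else is bookkeeping with proposition~\ref{pro:DR-original-def} and the standard exact sequence for change of conductor of ray class groups.
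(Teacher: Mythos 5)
Your proof is correct, and its skeleton matches the paper's: both rest on the decomposition (\ref{DR-decomp}) of $\DR_{\Pr}(\gf)$ over the divisors $\gd\mid\gf_{\Pr}$, together with an identification of the $\gd$-piece of each side with a quotient of $\Cl(\gf)$ by $\ker\bigl(\Cl(\gf)\to\Cl(\gf\gd^{-1})\bigr)$. Where you diverge is in the tools. The paper computes that kernel ad\`elically, from the exact sequence $\V_{\gf\gd^{-1}}/\V_\gf\to\Cl(\gf)\to\Cl(\gf\gd^{-1})\to 1$ and the identification $\V_{\gf\gd^{-1}}/\V_\gf=1+\gf_\fin\gd^{-1}/\gf_\fin$, and then reassembles the coproduct of quotients into the pushout by a chain of formal identities; it only verifies the map explicitly on the unit part $(\maxO_K/\gf_{\Pr})^*$. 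You instead work entirely with ideals: you establish $\phi(\mathrm{Stab}(\bar x_\gd))=\ker\bigl(\Cl(\gf)\to\Cl(\gf\gd^{-1})\bigr)$ directly via proposition~\ref{pro:DR-original-def}, and---a point the paper leaves implicit---you actually construct, for every class in $(\maxO_K/\gf_{\Pr})^{\circ}$, a lift $x$ with $(x)\in\Id_\Pr$ satisfying the auxiliary congruences and sign conditions, using Chebotarev density of $\Pr$ in $\Cl(\gf)$ to clear the part of $(x_0)$ supported away from $\Pr$. That construction is genuinely needed for the map in the statement to be defined on all of the first factor, so your version is, if anything, more complete on this point; the price is more bookkeeping with explicit representatives, where the ad\`elic route packages the same information into a single exact sequence.
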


The notation $A\oplus_G B$ above refers to the push out in the category of commutative monoids,
which when $G$ is a group is the quotient of $A\oplus B=A\times B$ by the action of $G$ given by 
$g\cdot(a,b)=(ga,g^{-1}b)$.

\begin{proof}
First observe that this morphism is well defined. Indeed, any element $x\in (\maxO_K/\gf_\Pr)^*$ is the image
of an element $\tilde{x}\in\maxO_K$ relatively prime to $\gf_\Pr$, and its class $[(\tilde{x})]$ in $\DR_\Pr(\gf)$
is indeed the image of its class in the ray class group $\Cl(\gf)$.

The fact that this morphism is an isomorphism is a consequence of the following equalities, which
will be justified below, and where $\gd$ runs over $\Id_\Pr$ with $\gd\mid\gf$:
\begin{equation}
	\label{eq:sss}
	\DR_\Pr(\gf) = \coprod_{\gd} [\gd]\Cl_\Pr(\gf\gd^{-1}) = \coprod_{\gd} [\gd]\Cl(\gf\gd^{-1}) 
\end{equation}
Let us simplify the right-hand side further and show 
	\begin{equation}
		\label{eq:ttt}
	[\gd]\Cl(\gf\gd^{-1}) = \Cl(\gf)/(1+\gf_\fin\gd^{-1}/\gf_\fin).		
	\end{equation}
We use the ad\`elic description of ray class groups: $\Cl(\gm)=\A_K^*/K^*\V_\gm$. Then we have the exact sequence
	$$
	\V_{\gf\gd^{-1}}/\V_\gf \longmap \Cl(\gf) \longmap \Cl(\gf\gd^{-1}) \longmap 1.
	$$
Since the archimedean parts of $\gf\gd^{-1}$ and $\gf$ agree, we have
	$$
	\V_{\gf\gd^{-1}}/\V_\gf = \V_{\gf_\fin\gd^{-1}}/\V_{\gf_\fin} = 1+\gf_\fin\gd^{-1}/\gf_\fin.
	$$
Equation (\ref{eq:ttt}) follows.

Combining this with (\ref{eq:sss}), we have
\begin{align*}
	\DR_\Pr(\gf) &\labeleq{} \coprod_{\gd} [\gd]\Cl(\gf)/(1+\gf_\fin\gd^{-1}/\gf_\fin) \\
				&\labeleq{} \coprod_{\gd} [\gd](\maxO_K/\gf_\fin\gd^{-1})^*\oplus_{(\maxO_K/\gf_\fin)^*}\Cl(\gf) \\
				&\labeleq{} \Big(\coprod_{\gd} [\gd](\maxO_K/\gf_\fin\gd^{-1})^*\Big)\oplus_{(\maxO_K/\gf_\fin)^*}\Cl(\gf) \\
				&\labeleq{} \big((\maxO_K/\gf_{\Pr})^{\circ}\times (\maxO_K/\gf^{\Pr})^*\big) 
					\oplus_{(\maxO_K/\gf_\fin)^*}\Cl(\gf) \\
				&\labeleq{} \big((\maxO_K/\gf_{\Pr})^{\circ}\times (\maxO_K/\gf^{\Pr})^*\big) 
					\oplus_{(\maxO_K/\gf_\Pr)^*\times (\maxO_K/\gf^\Pr)^*}\Cl(\gf) \\
				&\labeleq{} (\maxO_K/\gf_{\Pr})^{\circ} \oplus_{(\maxO_K/\gf_\Pr)^*}\Cl(\gf). 
\end{align*}

\end{proof}

\subsection{} \emph{Examples.}
If $K$ has class number $1$ and $\Pr$ is still Chebotarev dense, then we have
	\begin{equation}
	\label{eq:DR-PID2}
		\DR_{\Pr}(\gf) = 
			(\maxO_K/\gf_{\Pr})^{\circ}\oplus_{\maxO_{K,\gf_\infty}^*} (\maxO_K/\gf_\fin^{\Pr})^*,
	\end{equation}
where $\maxO_{K,\gf_\infty}^*$ is the subgroup of $\maxO_K^*$ consisting of units which are positive at all
places dividing $\gf_\infty$. 
At a different extreme, if $K$ is arbitrary but $\Pr=\mx_K$, then in the limit we have
\begin{equation}
	\label{eq:Bora-DR}
	\lim_\gf \DR_\Pr(\gf) = \hat{\maxO}_K^\circ \oplus_{\hat{\maxO}_K^*}G_K^{\mathrm{ab}}.
\end{equation}
If $K$ has class number $1$ and $\Pr=\mx_K$, then we have 
$$
\DR_{\Pr}(\gf) = (\maxO_K/\gf_\fin)^{\circ}/\maxO_{K,\gf_\infty}^*.
$$

\section{Global arguments}
The purpose of this section is to prove theorem \ref{global-thm-intro} from the introduction.
It will follow immediately from proposition \ref{abelian} and theorem \ref{global-thm2} below.

We continue with the notation of the previous section.
Also fix the following notation:
	\begin{align*}
		\E			&= \text{a finite \'etale $K$-algebra with a $\Lambda_{\scalA,\Pr}$-structure} \\
		S			&= \Hom_K(\E,\Kb), \text{ with its continuous action of } \G\times \Id_{\Pr}.
	\end{align*}
Define $\gr\in \Id_{\Pr}$ by setting
\begin{equation}
	\label{equ:r-def}
	\ord_\gp(\gr)=\inf\{i\ge 0\colon\; \gp^{i+1}S=\gp^i S\}
\end{equation}
for each prime $\gp\in\Pr$. This is well defined because
$\gp S=S$ whenever $\gp$ is unramified in $\ringb$, by proposition \ref{unramified}.

\begin{lemma} \label{gcd}
	Assume that $\E$ has an integral $\Lambda_{\scalA,\Pr}$-model $\ringb$.
	Let $\gd$ be an ideal in $\Id_{\Pr}$, and put $\gb=\gcd(\gd,\gr)$. Then $\gd S$
	equals $\gb S$, and this $G_K$-set is unramified at all primes dividing $\gd\gb^{-1}$.
\end{lemma}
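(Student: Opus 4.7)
The plan is to verify the two claims separately: the equality $\gd S = \gb S$ follows from the stabilization built into the definition of $\gr$ together with the commutativity of the Frobenius lifts, while the unramifiedness reduces to the local theorem \ref{local-thm} via base change to $\maxO_{K_\gp}$.

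For the first claim, write $\gd = \prod_\gp \gp^{d_\gp}$ and $\gr = \prod_\gp \gp^{r_\gp}$, so that $\gb = \prod_\gp \gp^{\min(d_\gp, r_\gp)}$. Since the $\psi_\gp$ commute, the action of any ideal on $S$ decomposes as an iterated composition of the actions of individual prime powers, and the order is irrelevant. For any $\gp$ with $d_\gp > r_\gp$, the definition of $\gr$ gives $\gp^{r_\gp+1} S = \gp^{r_\gp} S$; since $\psi_\gp$ thus restricts to a surjection from the finite set $\gp^{r_\gp} S$ to itself, it is bijective there, so $\gp^j S = \gp^{r_\gp} S$ for every $j \geq r_\gp$. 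Substituting these equalities (and reordering via commutativity) gives $\gd S = \gb S$.

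For the unramifiedness, fix a prime $\gp$ dividing $\gd\gb^{-1}$, i.e.\ $d_\gp > r_\gp$. The completion $B_\gp := \maxO_{K_\gp} \otimes_{\scalA} B$ is an integral $\Lambda_{\maxO_{K_\gp}, \gp}$-model of the finite \'etale $K_\gp$-algebra $E \otimes_K K_\gp$, and its associated $\Gp \times \Id_{\{\gp\}}$-set is obtained from $S$ by restricting the monoid action from $\Id_\Pr$ to $\Id_{\{\gp\}}$. Theorem \ref{local-thm} then guarantees that the inertia subgroup $I_\gp$ acts trivially on $\bigcap_{n\geq 0} \gp^n S$. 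But the first part of the argument already identifies this intersection with $\gp^{r_\gp} S$, and $\gb S \subseteq \gp^{r_\gp} S$ because $\min(d_\gp,r_\gp) = r_\gp$. Hence $I_\gp$ acts trivially on $\gb S$, showing that $\gd S = \gb S$ is unramified at $\gp$.

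The main obstacle is not any single deep idea but rather the careful identification of $\bigcap_{n\geq 0} \gp^n S$ with $\gp^{r_\gp} S$ at each prime $\gp$, which is what lets the global statement be bootstrapped from the local theorem. Once that identification is in hand, both halves of the lemma reduce to bookkeeping together with a direct citation of theorem \ref{local-thm}.
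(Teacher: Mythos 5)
Your proof is correct and follows essentially the same route as the paper's: the paper likewise uses the definition of $\gr$ to get $\gp\gb S=\gb S$ for each $\gp\mid\gd\gb^{-1}$, deduces $\gd S=\gb S$ by induction, observes that $\gp$ then acts bijectively on $\gb S$ (equivalently, $\gb S\subseteq\bigcap_n\gp^nS$), and cites theorem~\ref{local-thm}(1) for the triviality of inertia. The only difference is presentational, in that you make the identification $\bigcap_n\gp^nS=\gp^{r_\gp}S$ explicit rather than leaving it implicit in the bijectivity statement.
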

\begin{proof}
	Observe that for any prime $\gp\mid\gd\gb^{-1}$, we have $\gp\gb S=\gb S$.
	Indeed, we have $\ord_{\gp}(\gb)=\ord_{\gp}(\gr)$ and hence
	$\gp^{1+\ord_{\gp}(\gb)}S=\gp^{\ord_{\gp}(\gb)}S$, by the definition of $\gr$.
	Then $\gp\gb S=\gb S$ follows.	

	This implies by induction that $\gd S=\gb S$.
	It also implies that for each prime
	$\gp\mid\gd\gb^{-1}$, the action
	of $\gp$ on $\gb S=\gd S$ is bijective.
	Therefore by theorem~\ref{local-thm}(1), this $G_K$-set 
	is unramified at all primes $\gp\mid\gd\gb^{-1}$. 
\end{proof}

\begin{proposition} \label{abelian}
	If $\E$ has an integral $\Lambda_{\scalA,\Pr}$-model
	and $\Pr$ is Chebotarev dense, then the action of
	$\G$ on $S$ factors through the abelianization of $\G$.
\end{proposition}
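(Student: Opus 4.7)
The plan is to combine the local theorem with the Chebotarev density hypothesis to force the Galois group through which $\G$ acts on $S$ to be abelian. Let $N\subseteq\G$ denote the kernel of this action, put $G:=\G/N$ (a finite group), and let $L\subseteq\Kb$ be the corresponding finite Galois extension of $K$. I want to show that $G$ is abelian.

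First, I would fix $\gp\in\Pr$ with $\gp\nmid\gr$. By the very definition of $\gr$ we have $\gp S=S$, hence $S_{\unr}(\gp)=S$. The $\gp$-adic completion of the integral $\Lambda_{\scalA,\Pr}$-model is an integral $\Lambda_{\maxO_{K_\gp},\gp}$-model of the $\gp$-adic completion of $\E$, so Theorem~\ref{local-thm} applies at $\gp$. It yields that the inertia subgroup at $\gp$ acts trivially on $S$ (so $L/K$ is unramified at $\gp$), and that the Frobenius $F_\gp$ acts on $S$ as the monoid element $\gp\in\Id_\Pr$. Since the $\Id_\Pr$-action on $S$ commutes with the $\G$-action (each $\psi_\gp$ is a $K$-algebra endomorphism of $\E$), the self-map $[\gp]\colon S\to S$ is $\G$-equivariant, and so its image $F_\gp\in G$ commutes with every element of $G$; that is, $F_\gp\in Z(G)$.

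Next, I would invoke Chebotarev density. The Chebotarev-dense hypothesis on $\Pr$ says that every class in every ray class group $\Cl(\gf)$ is represented by infinitely many elements of $\Id_\Pr$; discarding the finite set of primes dividing $\gr$, this implies the images of the family $\{F_\gp:\gp\in\Pr,\;\gp\nmid\gr\}$ in the abelianization $G^{\mathrm{ab}}$ surject onto $G^{\mathrm{ab}}$. Combining this with the classical (nonabelian) Chebotarev density theorem for $L/K$ and the fact that only finitely many primes are either ramified in $L$ or divide $\gr$, I would deduce that every conjugacy class of $G$ is represented by a Frobenius $F_\gp$ with $\gp\in\Pr$, $\gp\nmid\gr$.

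To finish, let $A\subseteq G$ denote the subgroup generated by $\{F_\gp:\gp\in\Pr,\;\gp\nmid\gr\}$. By the first step $A\subseteq Z(G)$, so $A$ is abelian. By the second step every conjugacy class of $G$ meets $A$, which by the classical elementary lemma that a finite group is never the union of conjugates of a proper subgroup forces $A=G$. Hence $G$ is abelian, and the $\G$-action on $S$ factors through $\G^{\mathrm{ab}}$. The step I expect to be the main obstacle is the density argument in the third paragraph: the paper's hypothesis is phrased only in abelian (ray-class) terms, whereas one needs to conclude that every conjugacy class of $G$ contains a Frobenius at a prime in $\Pr\setminus\{\gp\mid\gr\}$; carefully combining the paper's condition with the classical Chebotarev density theorem to stay inside $\Pr$ is where the real work of the proof should live.
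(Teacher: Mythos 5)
Your strategy is the same as the paper's: localize, use Theorem~\ref{local-thm} (via Proposition~\ref{unramified}) at unramified primes of $\Pr$ to identify the action of Frobenius with that of $\psi_\gp$, and then invoke Chebotarev density to see that these Frobenii exhaust $\Gb$. Your first step is correct and is in fact a sharpening the paper does not record explicitly: since every conjugate of $F_\gp$ (one for each choice of embedding $\Kb\to\Kbp$) acts on $S$ as the single canonical map $\psi_\gp$, and $\Gb$ acts faithfully on $S$ by construction, each such $F_\gp$ is \emph{central} in $\Gb$. Your group-theoretic finish is also fine, though once centrality is in hand Jordan's lemma is overkill: a conjugacy class that meets the center is a singleton, so if every class contains some $F_\gp$ then $\Gb$ is abelian outright.

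The step you flag as the obstacle is a genuine gap, and you should know that the paper's own proof does not close it either: it simply asserts that Chebotarev density supplies, for each $g\in\Gb$, a prime $\gp_g\in\Pr$ unramified in $\ringb$ with $F_{\gp_g}\mapsto g$. But the stated definition of Chebotarev density only constrains the image of $\Pr$ in the abelian quotients $\Cl(\gf)$, and this does not control which conjugacy classes of a nonabelian quotient of $\G$ are met by Frobenii at primes of $\Pr$. Indeed the needed statement can fail: take $K=\Q$ and $L=L_1L_2$ with $\mathrm{Gal}(L_1/\Q)=\Z/2\Z$, $\mathrm{Gal}(L_2/\Q)=A_5$ and $L_1\cap L_2=\Q$, and let $\Pr$ consist of the primes unramified in $L$ whose Frobenius lies in the center $\Z/2\Z\times\{1\}$ of $G=\mathrm{Gal}(L/\Q)$. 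Since $L_2$ has no nontrivial abelian subextension, this $\Pr$ represents every class of every $\Cl(\gf)$ and so is Chebotarev dense in the stated sense; yet for each $p\in\Pr$ the Frobenius class is a single central element, so $\maxO_L$ carries commuting Frobenius lifts $\psi_p$ at all $p\in\Pr$ and is an integral $\Lambda_{\Z,\Pr}$-model of $\E=L$, while $\G$ acts on $S$ through the nonabelian $G$. So the conclusion requires a stronger (and surely intended) hypothesis, e.g.\ that $\Pr$ omits only finitely many primes, or more generally that every conjugacy class of every finite Galois extension of $K$ is the Frobenius class of some prime in $\Pr$. Under that hypothesis the classical Chebotarev theorem gives exactly what you want, and your centrality observation then finishes the proof immediately; under the literal hypothesis, neither your argument nor the paper's can be completed, because the statement itself fails.
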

\begin{proof}
Let $\ringb$ be an integral $\Lambda_{\scalA,\Pr}$-model of $\E$.
For each prime $\gp\in\Pr$ we consider the completion
$\gpO_\gp$, and its fraction field $K_\gp$. Then we obtain an
$\LOp$-structure on the finite \'etale $K_\gp$-algebra $\E_\gp=
\E\otimes_K K_\gp$, and then
$\ringb\otimes_\scalA \gpO_\gp$ is an integral $\LOp$-model of $\E_\gp$.
Fixing an embedding $\Kb\to \Kbp$ for each $\gp$ we can view $\Gp$ as
a subgroup of $\G$. The finite \'etale $K_\gp$-algebra $\E_\gp$ then
corresponds to the $\Gp$-set that one gets by restricting the action
of $\G$ on $S$ to $\Gp$.

Now let $\Gb$ be the image of the action map $\G\to\Map(S,S)$.
Because $\Pr$ is Chebotarev dense, for each
$g\in\Gb$ there is a prime $\gp=\gp_g$ in $\Pr$ such that
\begin{enumerate}
	\item $\ringb$ is unramified at $\gp$,
	\item the image of $F_\gp\in \Gp/\Ip$ under the induced map $\Gp/\Ip\to\Gb$ is $g$.
\end{enumerate}
By proposition \ref{unramified}, the action of $g$ on $S$ is the same as the 
action of $\gp_g$ on $S$. But by the definition
of $\Lambda_{\scalA,\Pr}$-structure, the $\gp_g$ commute with each other.
Therefore $\Gb$ is abelian.
\end{proof}

\subsection{} \emph{Conductors.}
By class field theory, any continuous action of the abelianization of $\G$ on a finite
discrete set $T$ factors, by the Artin map,
through the ray class group $\Cl(c(T))$ for a minimal cycle $c(T)$
on $K$, which we call the conductor of $T$. 

\begin{lemma}\label{lem:equiv}
Assume that $\E$ has an integral $\Lambda_{\scalA,\Pr}$-model $\ringb$
and that the action of $\G$ on	$S$ factors through its abelianization.
Let $\gf$ be a cycle on $K$, and let $\gr$ be as in (\ref{equ:r-def}). 
Then following are equivalent:
\begin{enumerate}
	\item the action of $\G\times \Id_{\Pr}$ on $S$ factors through an action of $\DR_{\Pr}(\gf)$,
	\item $\gr$ divides $\gf$, and for each ideal $\gd\mid\gf$ we have $c(\gd S) \mid \gf\gd^{-1}$,
	\item $\gr$ divides $\gf$, and for each ideal $\gd\mid\gr$ we have $c(\gd S) \mid \gf\gd^{-1}$,
	\item the least common multiple $\lcm_{\gd\mid \gr} \big(\gd\, c(\gd S)\big)$ divides $\gf$.
\end{enumerate}
\end{lemma}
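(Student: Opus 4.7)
The plan is to prove the chain (4)~$\Leftrightarrow$~(3)~$\Leftrightarrow$~(2) first (essentially bookkeeping via Lemma~\ref{gcd}), and then (1)~$\Leftrightarrow$~(3), which contains the main content. For (3)~$\Leftrightarrow$~(4): condition (4) says $\gd\cdot c(\gd S)\mid\gf$ for every $\gd\mid\gr$, which unwinds to $c(\gd S)\mid\gf\gd^{-1}$, and the case $\gd=\gr$ forces $\gr\mid\gf$. For (2)~$\Rightarrow$~(3), restrict to $\gd\mid\gr$. For (3)~$\Rightarrow$~(2), given $\gd\mid\gf$ set $\gb=\gcd(\gd,\gr)$; Lemma~\ref{gcd} gives $\gd S=\gb S$ and says this $\G$-set is unramified at every prime of $\gd\gb^{-1}$, so I will check $\ord_\gp c(\gd S)\le \ord_\gp(\gf\gd^{-1})$ prime by prime, splitting into $\gp\mid\gd\gb^{-1}$ (conductor trivial at $\gp$) and $\gp\nmid\gd\gb^{-1}$ (where $\ord_\gp\gd=\ord_\gp\gb$ so the bound $c(\gb S)\mid\gf\gb^{-1}$ from (3) transfers). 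The archimedean parts of $\gf\gd^{-1}$ and $\gf\gb^{-1}$ agree.

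For (1)~$\Rightarrow$~(3), I will first show $\gr\mid\gf$. Fix $\gp\in\Pr$ with $r:=\ord_\gp\gr\ge 1$ and suppose for contradiction that $k:=\ord_\gp\gf<r$. In $\DR_\Pr(\gf)$ the class $[\gp]$ is invertible in the $[\gp^k]$-component (since $\gp\nmid\gf\gp^{-k}$), so it has finite order $m$ in $\Cl(\gf\gp^{-k})$, giving $[\gp^{k+m}]=[\gp^k]$. Then (1) forces $\psi_\gp^{k+m}=\psi_\gp^k$ as maps $S\to S$, but their images $\gp^{k+m}S\subsetneq\gp^k S$ are distinct because the chain $\gp^i S$ is strictly decreasing for $i<r$. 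For the conductor part of (3), for $g\in\G$ and $s=\psi_\gd(s')\in\gd S$ we have $g\cdot s=\psi_\gd(g\cdot s')$, which under (1) corresponds to the element $[\gd]\cdot\mathrm{Artin}(g)\in\DR_\Pr(\gf)$. Hence $g_1,g_2\in\G$ act identically on $\gd S$ precisely when $\mathrm{Artin}(g_1),\mathrm{Artin}(g_2)$ agree in the quotient $\Cl(\gf\gd^{-1})$ of $\Cl(\gf)$, so the $\G$-action on $\gd S$ factors through $\Cl(\gf\gd^{-1})$, i.e., $c(\gd S)\mid\gf\gd^{-1}$.

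For (3)~$\Rightarrow$~(1), let $(g_1,\ga_1),(g_2,\ga_2)\in\G\times\Id_\Pr$ have the same image in $\DR_\Pr(\gf)$. Setting $\gd_i=\gcd(\ga_i,\gf_\fin)$ and $\gc_i=\ga_i\gd_i^{-1}$, this means $\gd_1=\gd_2=:\gd$ and $\mathrm{Artin}(g_1)\cdot[\gc_1]=\mathrm{Artin}(g_2)\cdot[\gc_2]$ in $\Cl(\gf\gd^{-1})$. The ideal $\gc_i$ is coprime to $\gf\gd^{-1}$, hence (using $\gr\mid\gf$) coprime to $\gr\gcd(\gd,\gr)^{-1}$, so Lemma~\ref{gcd} gives $\psi_{\gc_i}(\gd S)=\gd S$; Proposition~\ref{unramified} combined with the assumed bound $c(\gd S)\mid\gf\gd^{-1}$ then identifies $\psi_{\gc_i}|_{\gd S}$ with the Artin symbol of $\gc_i$. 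Factoring $\psi_{\ga_i}=\psi_{\gc_i}\circ\psi_\gd$ and using that Galois commutes with each $\psi$, the action of $(g_i,\ga_i)$ on $s$ reduces to $(g_i\cdot\mathrm{Artin}(\gc_i))\cdot\psi_\gd(s)$ with $\psi_\gd(s)\in\gd S$; by hypothesis the two elements $g_i\cdot\mathrm{Artin}(\gc_i)$ have the same image in $\Cl(\gf\gd^{-1})$, so they act identically on $\gd S$.

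The most delicate step is expected to be the $\gr\mid\gf$ portion of (1)~$\Rightarrow$~(3), where aligning the two sources of periodicity—the $[\gp]$-cycle of length $m$ in the stable component of $\DR_\Pr(\gf)$ versus the strictly shrinking chain $\gp^i S$—is what forces the contradiction when $k<r$; the remaining steps are routine once the bijection statements from Lemma~\ref{gcd} and the Artin description of $\psi$ on unramified strata are in hand.
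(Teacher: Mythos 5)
Your proof is correct and follows essentially the same route as the paper's: the decomposition (\ref{DR-decomp}) of $\DR_{\Pr}(\gf)$, lemma \ref{gcd}, and theorem \ref{local-thm} applied on the $\gp$-stable part of $S$ are exactly the ingredients used there, and your chain (4)$\Leftrightarrow$(3)$\Leftrightarrow$(2), (1)$\Leftrightarrow$(3) is only a reshuffling of the paper's (1)$\Leftrightarrow$(2)$\Leftrightarrow$(3)$\Leftrightarrow$(4). The only differences are local: you prove $\gr\mid\gf$ by contradiction via the finite order of $[\gp]$ in $\Cl(\gf\gp^{-k})$ where the paper argues directly from $[\gp]^n=[\gp]^{n+1}x$, and in (3)$\Rightarrow$(1) you are in fact more careful than the paper in handling $\gc_i=\ga_i\gd^{-1}$, which is coprime to $\gf\gd^{-1}$ but not necessarily to $\gf$, by working inside $\gd S$ rather than on all of $S$.
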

\begin{proof}
(1)$\Rightarrow$(2): 
To show $\gr\mid\gf$, we will show that for all $\gp\in\Pr$, we have $\gp^{n}S \subseteq \gp^{n+1} S$, where
$n=\ord_\gp(\gf)$. Using the decomposition
(\ref{DR-decomp}), we see $[\gp]^{n+1}\DR_\Pr(\gf)=[\gp]^n\DR_\Pr(\gf)$
and hence $[\gp]^n=[\gp]^{n+1}x$ for some $x\in\DR_\Pr(\gf)$. Then because the action of $\Id_P$
is assumed to factor through $\DR_{\Pr}(\gf)$, we have $\gp^n S = \gp^{n+1} xS \subseteq \gp^{n+1} S$.

Second, the condition $c(\gd S) \mid \gf\gd^{-1}$ of (2) is equivalent to the condition that
the action of $\G$ on $\gd S$ factors through the Artin map $\G\to\Cl(\gf\gd^{-1})$.
But this holds by the assumption (1) and the decomposition~(\ref{DR-decomp}).

(2)$\Rightarrow$(1): 
Consider an element $(\sigma,\ga)\in\G\times\Id_\Pr$. We will
show its action on $S$ depends only on its class in $\DR_\Pr(\gf)$.
First observe that by the assumption $c(\gd S) \mid \gf\gd^{-1}$, taken when $\gd=(1)$,
we have $c(S) \mid \gf$. This implies that the action of $\G$ factors through the Artin map 
$\G\to\Cl(\gf)$, and so it is enough to show that the action of $\ga$ depends only on its class 
$[\ga]\in\DR_\Pr(\gf)$.
Now put $\gd=\gcd(\ga,\gf)$ and $\ga'=\ga\gd^{-1}$. Then we have $\ga'\in\Id_\Pr(\gf)$. Therefore,
because of (\ref{DR-decomp}), it is enough to show that 
the action of $\ga'\in\Id_\Pr(\gf)$ on $\gd S$ depends only on its class $[\ga']\in\Cl_\Pr(\gf\gd^{-1})$.
In particular, it is enough to show
\begin{equation}
	\label{eq23498}
	\ga'\gd s = F_{\ga'}\gd s,	
\end{equation}
for all $s\in S$, where $F_{\ga'}\in\G$ is an element mapping to $[\ga']$ under the Artin map $\G\to\Cl(\gf)$.
To do this, it is enough to consider the case where $\ga'$ is a prime $\gp\in\Id_\Pr(\gf)$.
Now by our assumption $\gr\mid\gf$, we have $\gp\nmid\gr$ and hence $\gp$ acts bijectively on $S$.
Therefore by  theorem~\ref{local-thm}, we have
	$$
	\gp s = F_\gp s,
	$$
for all $s\in S$. This implies (\ref{eq23498}) and hence (1).

(2) $\Leftrightarrow$ (3): The implication (2) $\Rightarrow$ (3) is clear. So consider the other direction.
Given an ideal $\gd\mid\gf$, let $\gb$ denote $\gcd(\gd,\gr)$. Then by lemma \ref{gcd}, we have equivalences
	$$
	c(\gd S)  \mid \gf\gd^{-1}  \iff c(\gb S)\mid\gf\gd^{-1}
	\iff c(\gb S)\mid\gf\gd^{-1}\gd\gb^{-1} \iff c(\gb S)\mid\gf\gb^{-1}.
	$$
Therefore since $\gb$ is a divisor of $\gr$, the condition $c(\gd S)\mid \gf\gd^{-1}$ holds for all 
$\gd\mid\gf$ if it holds for all $\gd\mid\gr$. 

(3) $\Leftrightarrow$ (4): Clear.
\end{proof}

\begin{thm} 
\label{global-thm2}
Let $\gf$ be a cycle on $K$. Then the action of $\G\times \Id_{\Pr}$ on $S$ factors (necessarily uniquely)
through the map $\G\times \Id_{\Pr} \longmap \DR_{\Pr}(\gf)$ of (\ref{DR-recmap}) if and only if the following
hold:
\begin{enumerate}
	\item the action of $\G$ on $S$ factors through its abelianization,
	\item $\lcm_{\gd\mid \gr} \big(\gd\, c(\gd S)\big)$ divides $\gf$,
	\item $\E$ has an integral $\Lambda_{\scalA,\Pr}$-model.
\end{enumerate} 
If $\Pr$ is Chebotarev dense, then condition (1) can be removed.
\end{thm}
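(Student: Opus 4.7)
The plan is to derive Theorem~\ref{global-thm2} from Lemma~\ref{lem:equiv}, Proposition~\ref{abelian}, and the local Theorem~\ref{local-thm}. Assuming (1), (2), and (3), the factorization through $\DR_\Pr(\gf)$ is immediate from Lemma~\ref{lem:equiv}, since condition (2) of the theorem is precisely condition (4) of the lemma, and the lemma (applied under (1) and (3)) yields its equivalence with condition (1) of the lemma, which is the desired factorization. Conversely, if the action factors through $\DR_\Pr(\gf)$, then $\G$ acts on $S$ via the Artin map $\G\to\Cl(\gf)=\DR_\Pr(\gf)^*$ into an abelian group, so (1) holds; and once (3) is established, (2) follows by Lemma~\ref{lem:equiv} in the direction (1)$\Rightarrow$(4). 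The last sentence of the theorem, dropping (1) when $\Pr$ is Chebotarev dense, is immediate from Proposition~\ref{abelian}. The heart of the proof is therefore the construction of an integral $\Lambda_{\scalA,\Pr}$-model when the action factors through $\DR_\Pr(\gf)$.

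For the construction, I would work locally at each maximal ideal $\gp$ of $\scalA$ and patch. Using the decomposition~(\ref{DR-decomp}), the $\gp$-adically unramified locus $S_{\gp,\unr}=\bigcap_n\gp^n S$ corresponds to the image in $S$ of the piece of $\DR_\Pr(\gf)$ indexed by $\gd=\gp^{\ord_\gp(\gf)}$, namely $[\gd]\Cl_\Pr(\gf\gd^{-1})$. Because the conductor $\gf\gd^{-1}$ has trivial $\gp$-part, the inertia $\Ip$ maps trivially into $\Cl_\Pr(\gf\gd^{-1})$ and therefore acts trivially on $S_{\gp,\unr}$, verifying condition (1) of Theorem~\ref{local-thm}; and both $\gp\in\Id_\Pr$ and $F_\gp\in\Gp/\Ip$ map to the same class $[\gp]\in\Cl_\Pr(\gf\gd^{-1})$, verifying condition (2). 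Theorem~\ref{local-thm} then produces an integral $\Lambda_{\scalA_\gp,\gp}$-model $\ringb_\gp\subseteq\E\otimes_K K_\gp$ at each $\gp\in\Pr$. Taking $\ringb_\gp$ to be the maximal $\scalA_\gp$-order at $\gp\notin\Pr$ and patching, one obtains a finite flat $\scalA$-order $\ringb\subseteq\E$ with $K\otimes_\scalA\ringb=\E$, on which each $\psi_\gq$ satisfies the Frobenius lift condition at $\gq$ inherited from $\ringb_\gq$.

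The main obstacle is to ensure that $\ringb$ is stable under \emph{every} $\psi_\gq$ with $\gq\in\Pr$, not merely the Frobenius lift used to construct each individual $\ringb_\gp$. At maximal local orders this is automatic, since they are preserved by any $\scalA$-algebra endomorphism. The delicate case is $\gp\in\Pr$ with $\gp\mid\gf$ and $\gq\in\Pr$, $\gq\neq\gp$. Here the commutation $\psi_\gp\psi_\gq=\psi_\gq\psi_\gp$ forces $\psi_\gq$ to preserve each $\gp^iS$ and the stable locus $S_{\gp,\unr}$, hence each direct factor $\E_i$ of the decomposition $\E=\E_0\times\cdots\times\E_n$ of~\S\ref{localstructure}. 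Moreover, $\psi_\gq$ commutes with each map $f_i=\psi_\gp|_{\E_{i-1}}$ used to define the $\Lambda$-splitting $j\colon\E_0\to\E$ there, so $\psi_\gq$ preserves $j(R_0)$; and $\psi_\gq$ preserves each maximal order $R_i\subseteq\E_i$ and hence the ideals $\gp^{n-i+1}R_i$. The explicit local model~(\ref{eq:lambda-split}) produced by the proof of Theorem~\ref{local-thm} is therefore $\psi_\gq$-stable for every $\gq\in\Pr$, so the patched global $\ringb$ is an integral $\Lambda_{\scalA,\Pr}$-model of $\E$, completing (3) and hence the theorem.
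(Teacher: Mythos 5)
Your reduction of the theorem to the existence of an integral model (condition (3)), via lemma~\ref{lem:equiv} and proposition~\ref{abelian}, and your verification of the hypotheses of theorem~\ref{local-thm} at each $\gp$ using the decomposition~(\ref{DR-decomp}), both match the paper. The divergence --- and the gap --- is in how you ensure the glued order is stable under every $\psi_\gq$. The paper does not use the explicit local model~(\ref{eq:lambda-split}) in the global argument at all: it takes $\ringb_\gp$ to be the \emph{maximal} finite sub-$\Lambda_{\gpO_\gp,\gp}$-ring of $\E\otimes_K K_\gp$, so that $\psi_\gq(\ringb_\gp)\subseteq\ringb_\gp$ is automatic ($\psi_\gq(\ringb_\gp)$ is again a finite sub-$\Lambda_{\gpO_\gp,\gp}$-ring, hence contained in the maximal one); theorem~\ref{local-thm} is invoked only to know that this maximal order spans $\E\otimes_K K_\gp$. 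You instead try to check $\psi_\gq$-stability of the explicit model by hand, and the key step --- ``the commutation forces $\psi_\gq$ to preserve \dots each direct factor $\E_i$'' --- is false in general.

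Concretely, $\psi_\gq$ does preserve $\gp^iS$ and $S_0=\bigcap_n\gp^nS$, but it need not preserve the strata $S_i$ of~\ref{localstructure}: when $\gq\mid\gf$ the class $[\gq]$ need not be invertible in $\DR_\Pr(\gf)$, so $\gq$ need not act injectively on $S$ and can strictly decrease the ``depth'' $\min\{i:\gp^is\in S_0\}$. (For instance, take $S=\{x,y\}$ with all units of $\DR_\Pr(\gf)$ acting trivially and every non-invertible class acting as the constant map to $y$; then $\gq$ maps $S_1=\{x\}$ into $S_0=\{y\}$.) Consequently $\psi_\gq$ does not induce endomorphisms of the individual factors $\E_i$, and the assertions that it ``commutes with each $f_i$'' and ``preserves each maximal order $R_i$'' are not meaningful as stated. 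The conclusion you want is nevertheless true and repairable: $\gq$ carries $S_i$ into $\bigcup_{j\le i}S_j$, the induced pullback carries each $R_j$ into $R_i$ (integral elements pull back to integral elements), and $\ga_j=\gp^{n-j+1}R_j$ is \emph{more} $\gp$-divisible for smaller $j$, so $\psi_\gq(0\times\ga_1\times\cdots\times\ga_n)\subseteq 0\times\ga_1\times\cdots\times\ga_n$ still holds; likewise the retraction $S\to S_0$ defining $j$ genuinely commutes with $\gq$, so $j(R_0)$ is preserved. But as written the argument has a hole, and the paper's maximality device sidesteps the entire issue.
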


\begin{proof}
If (1)--(3) hold, then by lemma~\ref{lem:equiv},
the action of $\G\times \Id_{\Pr}$ on $S$ factors through $\DR_{\Pr}(\gf)$; and if $\Pr$ is Chebotarev 
dense, then (1) can be removed because, by proposition~\ref{abelian}, it follows from (3).

Let us now consider the converse direction.
Suppose that the $\G\times \Id_{\Pr}$-action on $S$ factors through $\DR_{\Pr}(\gf)$. Then (1)
holds because $\DR_\Pr(\gf)$ is commutative. Further, (3) implies (2) by lemma~\ref{lem:equiv}. Therefore it is
enough to show (3).

For each $\gp\in \Pr$, let $\ringb_\gp$ denote the maximal sub-$\Lambda_{\gpO_\gp,\gp}$-ring of $\E\otimes_K
K_\gp$ which is finite over $\gpO_\gp$. As mentioned in~\ref{rmk-group-alg}, it always exists. We will show that
$\ringb_\gp\otimes_{\gpO_\gp} K_\gp$ agrees with $E_\gp$. To do this, it is enough to show that $\E\otimes_K
K_\gp$ has an integral $\Lambda_{\gpO_\gp,\gp}$-model. So write $\gf=\gp^n\gf'$ with $n=\ord_\gp(\gf)$. Then
$[\gp^k] \in [\gp^n]\Cl(\gf')\subseteq \DR(\gf)$ for all $k\ge n$. This implies, by say~(\ref{eq:DR-mult}), that
the action of $\gp$ on $\bigcap_i\gp^i S=\gp^nS$ is given by the Artin symbol of $[\gp]\in \Cl(\gf')$, which by
our local result, theorem \ref{local-thm}, guarantees existence of an integral $\Lambda_{\gpO_\gp,\gp}$-model.

Now let $R$ denote the integral closure of $\scalA$ in $\E$, and let $\ringb$ denote the set of elements 
$a\in R$ such that the image of $a$ in $\E\otimes_K K_\gp$ lies in  $\ringb_\gp$ for all $\gp\in\Pr$. We will 
show that $\ringb$ is what we seek, an integral $\Lambda_{\scalA,\Pr}$-model for $E$.

For all $\gp\nmid \gf$, we are in the unramified case, and so $\ringb_\gp$ is $R\otimes_\scalA\localOp$, by
proposition \ref{unramified}. It follows that $\ringb$ is of finite index in $R$. Therefore, since $R$ is finite
and flat over $\scalA$ and we have $\E=R\otimes_\scalA K$, the same hold for $\ringb$. Further, $\ringb$ is
closed under all $\psi_\gq$ with $\gq\in\Pr$. Indeed it is enough to show $\psi_\gq(\ringb_\gp)\subseteq
\ringb_\gp$ for all $\gp,\gq\in\Pr$; and this holds because is a $\psi_\gq(\ringb_\gp)$
sub-$\Lambda_{\gpO_\gp,\gp}$-ring of $E_\gp$ which is finite over $\gpO_\gp$, and so it is contained in the
maximal one $\ringb_\gp$. Finally, for each $\gp\in \Pr$, the induced endomorphism $\psi_\gp$ of $\ringb$ is a
Frobenius lift, because $\ringb\otimes_\scalA \localOp$ is a $\Lambda_{\gpO_\gp,\gp}$-ring. Therefore $\ringb$
is an integral $\Lambda_{\scalA,\Pr}$-model for $E$. This establishes (3) and, hence, (1) and (2) as explained
above.
\end{proof}

\begin{corollary}
\label{cor:lambda-Galois-category-equiv}
If $\Pr$ is Chebotarev dense, there is a contravariant equivalence between the category of finite \'etale
$\Lambda_{\scalA,\Pr}$-rings over $K$ which admit an integral model and the category of finite discrete sets
with a continuous action of the profinite monoid $\lim_\gf \DR_{\Pr}(\gf)$. 
\end{corollary}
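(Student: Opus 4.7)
The plan is to combine Grothendieck's Galois theory with Theorem \ref{global-thm-intro}, and then repackage the resulting factorization condition in terms of continuous actions of the profinite monoid $\lim_\gf \DR_{\Pr}(\gf)$.

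First I would recall that Grothendieck duality gives a contravariant equivalence between finite \'etale $K$-algebras $\E$ and finite discrete $\G$-sets $S=\Hom_{K\alg}(\E,\Kb)$. Since $\E$ is a $K$-algebra, one has $\E\otimes_\scalA k(\gp)=0$ for every $\gp\in\Pr$, so the Frobenius-lift requirement is vacuous and a $\Lambda_{\scalA,\Pr}$-structure on $\E$ amounts to nothing more than a monoid homomorphism $\Id_{\Pr}\to\End_{K\alg}(\E)$. Under Grothendieck duality this transports to a monoid map $\Id_{\Pr}\to\Map_\G(S,S)$, equivalently to a continuous action of the product monoid $\G\times\Id_{\Pr}$ on $S$ (with $\Id_{\Pr}$ discrete). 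This already furnishes a contravariant equivalence between finite \'etale $\Lambda_{\scalA,\Pr}$-rings over $K$ and finite discrete $\G\times\Id_{\Pr}$-sets, as recorded in the introduction.

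Next, Theorem \ref{global-thm-intro} identifies, under the Chebotarev density hypothesis, the full subcategory of those $\E$ admitting an integral $\Lambda_{\scalA,\Pr}$-model with the full subcategory of $S$ on which the $\G\times\Id_{\Pr}$-action factors through the surjection (\ref{DR-recmap}) for some cycle $\gf$. This factorization is unique when it exists by surjectivity of (\ref{DR-recmap}); and if it exists for $\gf$ it exists for every multiple $\gf'$ via the canonical projection $\DR_{\Pr}(\gf')\to\DR_{\Pr}(\gf)$. These factorizations therefore assemble canonically into a continuous action of $\lim_\gf\DR_{\Pr}(\gf)$ on $S$. Conversely, any continuous action of $\lim_\gf\DR_{\Pr}(\gf)$ on a finite discrete set $S$ must factor through some finite level: the target monoid $\End(S)$ is finite discrete and the source is compact profinite, so each fibre of the action map is an open subset that, by compactness and finiteness of the partition, is a finite union of basic open sets coming from a single $\DR_{\Pr}(\gf)$. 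Composing with (\ref{DR-recmap}) recovers a $\G\times\Id_{\Pr}$-action of precisely the kind produced above.

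The main point to verify is that these two constructions are mutually inverse and that, under the forgetful functor to $\G\times\Id_{\Pr}$-sets, the composition agrees with the identity. This reduces to the compatibility of the surjections (\ref{DR-recmap}) as $\gf$ varies with the canonical transition maps $\DR_{\Pr}(\gf')\to\DR_{\Pr}(\gf)$, which is immediate from the definitions in Section \ref{sec:DR}. Functoriality on morphisms is automatic, as morphisms on both sides of the claimed equivalence are simply equivariant maps of the underlying finite discrete sets.
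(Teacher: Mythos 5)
Your proposal is correct and follows exactly the route the paper intends: the corollary is stated without proof as an immediate consequence of Theorem \ref{global-thm-intro} together with the Galois-theoretic dictionary between finite \'etale $\Lambda_{\scalA,\Pr}$-rings over $K$ and finite discrete $\G\times\Id_{\Pr}$-sets set up in the introduction, plus the standard compactness argument that a continuous action of a profinite monoid on a finite discrete set factors through a finite quotient. Your write-up simply makes these steps explicit, and all of them check out.
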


\part{Periodic loci and explicit class field theory}
\section{$\Lambda$-schemes}

Below $X$ will denote a flat $\scalA$-scheme. 

\subsection{} \emph{$\Lambda$-schemes.} Let $\gp$ be a maximal ideal of $\scalA$. As in the affine
case, the fiber $X\times_{\Spec\scalA}\Spec k(\gp)$ has a natural $k(\gp)$-scheme endomorphism
$F_{\gp}$ which is the identity map on the underlying topological space and such that for each
affine open subscheme $\Spec \ringb$, the induced the endomorphism of $\ringb$ is the affine Frobenius map
$x\mapsto x^{N(\gp)}$.

Let $\End_{\scalA}(X)$ denote the monoid of $\scalA$-scheme endomorphisms of $X$. An endomorphism
$\psi\in\End_{\scalA}(X)$ is said to be a \emph{Frobenius lift} at $\gp$ if the induced endomorphism on the fiber
$X\times_{\Spec \scalA}\Spec k(\gp)$ agrees with $F_{\gp}$. A \emph{$\Lambda_{\scalA,\Pr}$-structure} on a flat
$\scalA$-scheme $X$ is defined to be a set map $\Pr\to\End_{\scalA}(X)$, denoted $\gp\mapsto \psi_{\gp}$ such
that $\psi_{\gp}$ is a Frobenius lift at $\gp$ for each $\gp\in\Pr$ and such that
$\psi_{\gp}\circ\psi_{\gq}=\psi_{\gq}\circ\psi_{\gp}$ for all $\gp,\gq\in\Pr$. We will call an $\scalA$-scheme
with $\Lambda_{\scalA,\Pr}$-structure a \emph{$\Lambda_{\scalA,\Pr}$-scheme}. (When $X$ is not flat over
$\scalA$, this definition still makes sense; but as in the affine case, it is not well behaved. In general, one
should define it to be an action of the Witt vector monad $W_{\scalA,\Pr}^*$ as in the introduction
to~\cite{Borger:BGWV-II}. We will only consider $\Lambda$-structures on flat schemes in this paper; so the
simplified definition above is good enough here.)

For any ideal $\ga\in\Id_{\Pr}$ with prime factorization $\ga=\gp_1\cdots\gp_n$, 
let $\psi_{\ga}$ denote the composition $\psi_{\gp_1}\circ\cdots\circ \psi_{\gp_n}$.
It is independent of the order of the factors because the operators $\psi_{\gp}$ commute with each other.

A morphism $X\to Y$ of $\Lambda_{\scalA,\Pr}$-schemes is a morphism $f\colon X\to Y$ of $\scalA$-schemes such
that $f\circ\psi_\gp=\psi_\gp\circ f$, for all $\gp\in\Pr$. In this way, $\Lambda_{\scalA,\Pr}$-schemes form a
category.

\subsection{}\emph{Examples.} 
\label{subsec:toric-lambda-schemes}
The multiplicative group $X=\Gm$ over $\scalA$ has a $\Lambda_{\scalA,\Pr}$-structure given by
$\psi_\gp(x)=x^{N(\gp)}$. This extends uniquely to $\Lambda_{\scalA,\Pr}$-structures on $\A^1$ and $\P^1$. More
generally, projective $n$-space $\P^n$ has a $\Lambda_{\scalA,\Pr}$-structure where $\psi_\gp$ raises the
homogeneous coordinates to the $N(\gp)$ power.

Any product of $\Lambda_{\scalA,\Pr}$-schemes is again a $\Lambda_{\scalA,\Pr}$-scheme, where the
$\psi$-operators act componentwise. In a similar way, coproducts of $\Lambda_{\scalA,\Pr}$-schemes are $\Lambda_{\scalA,\Pr}$-schemes.

\section{Periodic $\Lambda$-schemes}

Let $\gf$ be a cycle on $K$. 

\subsection{} \emph{Periodic $\Lambda$-schemes.}
We will say that a $\Lambda_{\scalA,\Pr}$-scheme $X$ is \emph{$\gf$-periodic} if
for all $\gf$-equivalent ideals $\ga,\gb\in\Id_{\Pr}$,
the two maps $\psi_{\ga},\psi_{\gb}\colon X \to X$ are equal---in other words, the monoid map
$\Id_\Pr\to\End_A(X)$ factors through $\DR_\Pr(\gf)$.

\subsection{} \emph{Examples.}
Suppose that $\scalA=\Z$, $\Pr=\mx_\Q$, and
$\gf=(n)\infty$ with $n$ positive. Then $\gf$-periodicity means that $\psi_{(a)}=\psi_{(a+n)}$ for all
integers $a\geq 1$. In other words, the sequence of Frobenius lifts $\psi_{(a)}$ is periodic in $a\geq 1$ with
period dividing $n$, which is the reason for the name. Representation rings of finite groups, with their
usual $\lambda$-ring structure in algebraic K-theory, are periodic. In fact, periodicity was first introduced in 
this context by Davydov~\cite{Davydov:periodic-lambda-rings}.

On the other hand, when $\scalA$ is general but $\gf$ is $(1)$, then $\gf$-periodicity means that $\psi_{\ga}$
depends only on the class of $\ga$ in the class group $\Cl(1)$. If $\gf$ is instead the product of all
real places, then it means that $\psi_{\ga}$ depends only on the class of $\ga$ in the narrow class group
$\Cl(\gf)$. In particular, if either of these class groups is trivial, then every $\psi_\ga$ is the
identity map, and so any (flat) $\scalA$-scheme has at most one $\Lambda_{\scalA,\Pr}$-structure with that type
of periodicity. 

If we are in the intersection of the two cases above, a $(1)$-periodic $\Lambda_{\Z,\Pr}$-ring is
just a $\lambda$-ring in which all the Adams operations are the identity. 
Elliott~\cite{Elliott:binomial-rings} has proved that this is equivalent to being a binomial ring, 
a notion which dates back to Berthelot's expos\'e in SGA6~\cite{SGA6}, p.\ 323.

\begin{proposition}
\label{pro:periodic-structure2}
Let $X$ be a separated flat $\gf$-periodic $\Lambda_{\scalA,\Pr}$-scheme of finite type.
\begin{enumerate}
	\item If $\Pr$ is infinite, then $X$ is affine, reduced, and quasi-finite over $\scalA$, with
		\'etale generic fiber $X_K$. 
	\item If $\Pr$ is Chebotarev dense, then the generic fiber's function algebra
		$\sheafO(X_K)$ is a product of subextensions of the ray class field $K(\gf)$.
\end{enumerate} 
\end{proposition}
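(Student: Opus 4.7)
The plan is to exploit $\gf$-periodicity to find infinitely many $\gp\in\Pr$ at which $\psi_\gp$ is an automorphism of $X$ of finite order, deduce étaleness of the corresponding fibers $X_{k(\gp)}$, and then propagate this to the generic fiber. Part~(2) then follows from theorem~\ref{global-thm-intro} applied to $\sheafO(X_K)$ with integral model $\sheafO(X)$.

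For (1), only finitely many primes of $\Pr$ divide $\gf$, so $\Pr'=\{\gp\in\Pr\colon \gp\nmid\gf\}$ is infinite. For any such $\gp$, the class $[\gp]$ lies in the invertible part $\DR_\Pr(\gf)^*=\Cl_\Pr(\gf)$, a finite group, so $[\gp^n]=[(1)]$ for some $n\geq 1$; $\gf$-periodicity then gives $\psi_\gp^n=\psi_{\gp^n}=\psi_{(1)}=\id_X$. Hence $\psi_\gp$ is an automorphism of $X$, and its reduction $F_\gp$ satisfies $F_\gp^n=\id$ on $X_{k(\gp)}$, i.e.\ every local section $a$ of $\sheafO(X_{k(\gp)})$ satisfies $a^{N(\gp)^n}=a$. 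A commutative $k(\gp)$-algebra of finite type every element of which is a root of the separable polynomial $T^{N(\gp)^n}-T$ must be finite étale, so each $X_{k(\gp)}$ with $\gp\in\Pr'$ is finite étale over $k(\gp)$.

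Flatness of $X$ over the one-dimensional Dedekind ring $\scalA$ makes fiber dimension locally constant, so every fiber is zero-dimensional and $X$ is quasi-finite. By Zariski's main theorem $X$ embeds as a quasi-compact open in a finite $\scalA$-scheme, which being one-dimensional is affine, so $X$ itself is affine. To promote étaleness from special to generic fibers, let $U\subseteq X$ be the étale locus of $X/\scalA$, so that $X_{k(\gp)}\subseteq U$ for every $\gp\in\Pr'$, and suppose for contradiction that some $\eta\in X_K$ lies outside $U_K$. Its closure $\bar\eta$ in $X$ (with reduced induced structure) is an integral closed subscheme contained in $Z=X\setminus U$. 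Since $\eta$ maps to the generic point of $\Spec\scalA$, the structure map $\scalA\to\sheafO(\bar\eta)$ is injective, so $\sheafO(\bar\eta)$ is $\scalA$-torsion free and thus $\scalA$-flat. The morphism $\bar\eta\to\Spec\scalA$ is therefore open, its image contains the generic point, and consequently meets all but finitely many maximal ideals; it must then meet some $X_{k(\gp)}$ with $\gp\in\Pr'$, contradicting $\bar\eta\subseteq Z$. Thus $X_K$ is finite étale, and the inclusion $\sheafO(X)\hookrightarrow\sheafO(X_K)$ provided by flatness forces $X$ itself to be reduced.

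For (2), under the hypothesis that $\Pr$ is Chebotarev dense, $\sheafO(X_K)$ is a finite étale $\Lambda_{\scalA,\Pr}$-$K$-algebra, $\sheafO(X)$ is an integral $\Lambda_{\scalA,\Pr}$-model for it, and $\gf$-periodicity is precisely the assertion that the action of $\G\times\Id_\Pr$ on $\Hom_K(\sheafO(X_K),\Kb)$ factors through $\DR_\Pr(\gf)$. Theorem~\ref{global-thm-intro} then identifies the field factors of $\sheafO(X_K)$ with transitive $\DR_\Pr(\gf)$-orbits; the Galois action on each such orbit factors through $\Cl(\gf)=\DR_\Pr(\gf)^*$, exhibiting the corresponding field as a subextension of the ray class field $K(\gf)$. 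The main technical difficulty in this argument is the bridge from special-fiber étaleness to generic-fiber étaleness, and it is there that the assumption that $\Pr$ is infinite, rather than merely nonempty, is genuinely used.
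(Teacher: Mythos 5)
Part (1) of your argument is correct and follows the same route as the paper: identify that $\psi_\gp$ has finite order for the infinitely many $\gp\nmid\gf$, deduce that those fibers are finite \'etale, spread out to the generic fiber, and get affineness from Zariski's Main Theorem plus the fact that opens in a one-dimensional affine noetherian scheme are affine. Where the paper cites the EGA IV constructibility results (9.7.7, 9.2.6.2, 14.2.4), you argue by hand via openness of flat finite-type morphisms; that works (the one step you leave implicit --- that every connected component of $X$, and not just $\bar\eta$, dominates $\Spec\scalA$ and hence meets a fiber over $\Pr'$, which is needed to conclude that \emph{every} fiber is zero-dimensional from local constancy of fiber dimension --- is exactly the openness argument you give for $\bar\eta$, so this is cosmetic).

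Part (2), however, has two genuine gaps. First, $\sheafO(X)$ is not an integral $\Lambda_{\scalA,\Pr}$-model of $\sheafO(X_K)$: an integral model must be \emph{finite} over $\scalA$, while part (1) only gives quasi-finiteness. For example $X=\Spec\bigl(\rcl{\scalA,\Pr}{\gf}[1/t]\bigr)$ is a perfectly good flat separated $\gf$-periodic $\Lambda$-scheme of finite type whose global sections are not a finite $\scalA$-module. The paper repairs this by inverting a suitable $t$ so that $\sheafO(X)[1/t]$ becomes finite \'etale over $\scalA[1/t]$ and then working with the still-Chebotarev-dense set $\Pr_t$. Second, and more seriously, your claim that ``$\gf$-periodicity is precisely the assertion that the action of $\G\times\Id_\Pr$ on $S$ factors through $\DR_\Pr(\gf)$'' conflates two different statements. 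Periodicity, by definition, only says that the monoid map $\Id_\Pr\to\End(S)$ factors through $\DR_\Pr(\gf)$; it says nothing a priori about the Galois action. The factorization of the full $\G\times\Id_\Pr$-action through the map (\ref{DR-recmap}) additionally forces $\G$ to act through $\Cl(\gf)$ --- which is exactly the conclusion you are trying to reach, not a restatement of the hypothesis. Citing theorem \ref{global-thm-intro} does not close this circle either: it only produces \emph{some} cycle $\gf'$ (possibly divisible by the bad primes) through which the action factors, not the given $\gf$. The missing step is the Chebotarev argument of proposition \ref{abelian} combined with condition (2) of theorem \ref{global-thm2}: for $\gq$ unramified in the (localized) model, the Frobenius at $\gq$ acts as $\psi_\gq$, which by periodicity depends only on $[\gq]\in\Cl(\gf)$, and such Frobenii are dense in the image of $\G$; hence the Galois action on each field factor is a quotient of $\Cl(\gf)$ and the conductor divides $\gf$. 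This is the content of the paper's proof of (2), and it cannot be absorbed into the definition of periodicity.
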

\begin{proof}

(1): Let us first show that $X$ is reduced and quasi-finite over $\scalA$. By periodicity, for every $\gp\in\Pr$
satisfying $\gp\nmid\gf$, the Frobenius lift $\psi_{\gp}$ on $X$ is an automorphism of finite order. Therefore
the Frobenius map $F_\gp$ on the fiber $k(\gp)\otimes X$ is an automorphism of finite order, and so the fiber is
both geometrically reduced and finite over $k(\gp)$. Since $\Pr$ is infinite and since the set of prime ideals
of $\scalA$ with geometrically reduced fibers forms a constructible subset of $\Spec\scalA$, by EGA IV (9.7.7)
\cite{EGA-no.28}, the generic fiber $X\times_{\Spec \scalA}\Spec K$ must be geometrically reduced. Similarly,
since infinitely many fibers are finite, the generic fiber is also finite, by EGA IV (9.2.6.2)~\cite{EGA-no.28}.
We now use the flatness of $X$ over $\scalA$ to pass from the generic fiber to all of $X$. It is clear that
flatness implies $X$ is reduced. For quasi-finiteness, apply EGA IV (14.2.4) \cite{EGA-no.28}.

Therefore $X$ is reduced and is quasi-finite over $\scalA$. Affineness then follows from Zariski's Main Theorem,
as we now explain. Let $\ringd$ denote the integral closure of $\scalA$ in $\sheafO(X)$. Let $X'$ denote $\Spec
\ringd$, called the normalization of $\scalA$ in $X$ in the Stacks Project
\cite[\href{http://stacks.math.columbia.edu/tag/035H}{Tag 035H}]{stacks-project}. Then $\sheafO(X)$ is reduced
and is flat over $\scalA$, and hence so is $\ringd$. It is also finite over $\scalA$, by
\cite[\href{http://stacks.math.columbia.edu/tag/03GR}{Tag 03GR}]{stacks-project}. Since $X$ is quasi-finite,
the canonical map $X\to X'$ is an open immersion by Zariski's Main Theorem
\cite[\href{http://stacks.math.columbia.edu/tag/03GW}{Tag 03GW}]{stacks-project}. Finally, since $X'$ is finite
flat over $\scalA$, which is a Dedekind domain, its Krull dimension is $1$ and hence its open subscheme $X$ must
be affine \cite[\href{http://stacks.math.columbia.edu/tag/09N9}{Tag 09N9}]{stacks-project}.

Thus we can write $X=\Spec \ringb$, where $\ringb$ is an $\gf$-periodic $\Lambda_{\scalA,\Pr}$-ring which is
reduced and is flat, quasi-finite, and of finite type over $\scalA$. It follows that $K\otimes_{\scalA}\ringb$
is \'etale over $K$.

(2): It follows from statement (1) that there is a nonzero element $t\in \scalA$ such that $\ringb[1/t]$ is a
finite product $\prod_i \ringd_i$, where each $\ringd_i$ is the integral closure of $\scalA[1/t]$ in a finite
extension $L_i$ of $K$: Indeed, since $\ringb$ is of finite type over $\scalA$ and since
$\ringb\otimes_{\scalA}K$ is finite over $K$, there is an element $t\in\scalA$ such that $\ringb[1/t]$ is finite
flat over $\scalA[1/t]$. Since $\ringb$ is reduced, so is $\ringb[1/t]$, and hence the discriminant ideal of
$\ringb[1/t]$ over $\scalA[1/t]$ is nonzero. Then by scaling $t$ so that it lies in the discriminant ideal, and
is nonzero, we may assume that $\ringb[1/t]$ is finite \'etale over $\scalA[1/t]$. It then follows that
$\ringb[1/t]$ is the integral closure of $\scalA[1/t]$ in $K\otimes_{\scalA}\ringb$ and is hence of the required
form.

Now let $\Pr_t$ denote the set of primes $\gp$ in $\Pr$ that do not divide $t$. Then we can consider $\Pr_t$ as
a set of primes of $\scalA[1/t]$; and since $\Pr$ is Chebotarev dense, so is $\Pr_t$. Applying proposition
\ref{abelian} to the $\Lambda_{\scalA[1/t],\Pr_t}$-ring $\ringb[1/t]$, we see each field 
$K\otimes_\scalA \ringd_i$ is an abelian extension of $K$. Since the Frobenius elements act on each $\ringd_i$
with period $\gf$, the conductor of $K\otimes_\scalA \ringd_i$ divides $\gf$. \end{proof}

\subsection{} \emph{Ray class algebras.}
\label{subsec:ray-class-lambda-rings}
These are $\Lambda$-ring analogues of the ray class fields of $K$. 

We can view $\DR_{\Pr}(\gf)$ as a pointed $\DR_{\Pr}(\gf)$-set: the distinguished point is the identity element,
and the action is translation. By theorem~\ref{global-thm2}, the corresponding $\Lambda_{\scalA,\Pr}$-ring over $K$ has an integral model. Define $\rcl{\scalA,\Pr}{\gf}$, the \emph{ray class algebra of conductor $\gf$},
to be the maximal $\Lambda_{\scalA,\Pr}$-order in this $K$-algebra. Thus we have
	$$ 
	K\otimes_{\scalA} \rcl{\scalA,\Pr}{\gf} 
		= \prod_{\substack{\gd\in\Id_{\Pr}\\\gd\mid\gf_{\fin}}} K(\gf\gd^{-1}), 
	$$ 
where $K(\gf\gd^{-1})$ is the ray class
field of $K$ with conductor $\gf\gd^{-1}$. Under this identification, the map 
$\beta\colon \rcl{\scalA,\Pr}{\gf}\to\Kb$ coming from the distinguished point of $\DR_{\Pr}(\gf)$
is the projection to the component with $\gd=(1)$.

In our previous paper \cite{Borger-deSmit:Integral-models}, we considered the case where $\scalA=\Z$ and $\Pr$ is
all maximal ideals. There we showed that the ray class algebra of conductor $(n)\infty$ is
$\Z[x]/(x^n-1)$, or more naturally, the group ring on the cyclic group $\mu_n(\Kb)$ of $n$-th roots of unity
in $\Kb$.

Observe that the ray class algebra is not usually a domain, and in particular the map
$\beta\:\rcl{\scalA,\Pr}{\gf}\to K(\gf)$ to the ray class field is not usually injective. 
Also, unlike the ray class field, it depends not only on $\gf$ but also on $\scalA$ and $\Pr$. On the
other hand, $K\otimes_{\scalA} \rcl{\scalA,\Pr}{\gf}$ is independent of $\scalA$.

If $\Pr$ is Chebotarev dense, the ray class algebra $\rcl{\scalA,\Pr}{\gf}$ satisfies the following
maximality property: if $\ringd$ is a reduced finite flat $\gf$-periodic $\Lambda_{\scalA,\Pr}$-ring equipped with a map
$\alpha\colon \ringd\to\Kb$ of $\scalA$-algebras, then there is a unique map $\varphi\colon \ringd\to \rcl{\scalA,\Pr}{\gf}$ of
$\Lambda_{\scalA,\Pr}$-rings making the following diagram commute:
	$$
	\xymatrix{
	& \Kb \\
	\ringd \ar^{\alpha}[ur] \ar@{-->}^{\varphi}[rr] 
		& & \rcl{\scalA,\Pr}{\gf}. \ar_{\beta}[ul]
	}
	$$
This is simply because, under the anti-equivalence with $\DR_{\Pr}(\gf)$-sets, $\rcl{\scalA,\Pr}{\gf}$ 
corresponds to $\DR_{\Pr}(\gf)$, which is the free $\DR_{\Pr}(\gf)$-set on one generator.

\subsection{} \emph{Change of $\gf$.}
Suppose $\gf'=\gf\ga$. Then the maps $\DR_\Pr(\gf')\to \DR_{\Pr}(\gf)$ and $\DR_\Pr(\gf) \longmap \DR_\Pr(\gf')$ 
of~\ref{subsec:DR-change-of-f} induce an inclusion
	\begin{equation}
	u\:\rcl{\scalA,\Pr}{\gf} \longmap \rcl{\scalA,\Pr}{\gf'}
	\end{equation}
and a surjection
	\begin{equation}
	v\:\rcl{\scalA,\Pr}{\gf'} \longmap \rcl{\scalA,\Pr}{\gf}	
	\end{equation} 
of $\Lambda_{\scalA,\Pr}$-rings, and
the compositions $u\circ v$ and $u\circ v$ agree with the two $\psi_\ga$ endomorphisms.

In the case where $K=\Q$, $\Pr$ is all maximal ideals, $\gf=(n)\infty$, and $\gf'=(n')\infty$, these maps can be 
identified with the maps on group rings corresponding to the inclusion $\mu_{n}(\Kb)\subseteq \mu_{n'}(\Kb)$, in 
the case of $u$, and the $n'/n$-th power map $\mu_{n'}(\Kb)\to \mu_{n}(\Kb)$, in the case of $v$.

\section{Periodic loci and abelian extensions}

Let $\gf$ be a cycle on $K$. Let $X$ be a separated (flat) $\Lambda_{\scalA,\Pr}$-scheme. 

\subsection{} \emph{Periodic locus $\per{X}{\gf}$.}
\label{subsec:periodic-loci}
Define the $\gf$-periodic locus $\per{X}{\gf}$ of $X$ to be the scheme-theoretic intersection
	$$
	\per{X}{\gf} = \bigcap_{\substack{\ga,\gb\in\Id_{\Pr}\\\ga\sim_{\gf}\gb}} X(\psi_{\ga}=\psi_{\gb}),
	$$
where $X(\psi_{\ga}=\psi_{\gb})$ denotes the equalizer of the two maps $\psi_{\ga},\psi_{\gb}\colon X\to X$.  
Since $X$ is separated, $\per{X}{\gf}$ is a closed subscheme. 
The functor $\per{X}{\gf}$ represents is
	$$
	\per{X}{\gf}(C) = \{x\in X(C): \psi_{\ga}(x)=\psi_{\gb}(x) \text{ for all } \ga,\gb\in\Id_{\Pr} 
	\text{ with } \ga\sim_{\gf}\gb\}.
	$$
We emphasize that $\per{X}{\gf}$ depends on $\Pr$ as well as $\gf$, although the notation does not reflect this.

Observe that $X(\gf)$ is functorial in $X$. It also behaves well under change of $\gf$:
if $\gf'$ is another cycle and $\gf\mid\gf'$, then we have
	$$
	\per{X}{\gf} \subseteq \per{X}{\gf'}.
	$$

Finally, let $\perfl{X}{\gf}$ denote the maximal $\scalA$-flat subscheme of $\per{X}{\gf}$. 
It is the closed subscheme of $\per{X}{\gf}$ defined by the ideal sheaf of $\scalA$-torsion elements.
Although the actual periodic locus $\per{X}{\gf}$ is more fundamental than $\perfl{X}{\gf}$,
for the purposes of this paper it is enough to consider $\perfl{X}{\gf}$, and doing so will allow us to
avoid the subtleties of $\Lambda$-rings with torsion.

\begin{proposition}
\label{pro:preimage-of-periodic-locus}
For any ideal $\ga\in\Id_{\Pr}$, we have the subscheme inclusion
	\begin{equation} \label{eq:382}
		\per{X}{\ga\gf}\subseteq \psi_{\ga}^{-1}(\per{X}{\gf}).
	\end{equation}
\end{proposition}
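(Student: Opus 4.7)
The plan is to verify the inclusion on the level of the functor of points, using the explicit description given in \ref{subsec:periodic-loci}. Let $C$ be any $\scalA$-algebra and let $x\in\per{X}{\ga\gf}(C)$. I want to show that $\psi_\ga(x)\in\per{X}{\gf}(C)$, i.e.\ that for all $\gb,\gc\in\Id_\Pr$ with $\gb\sim_\gf\gc$, one has $\psi_\gb(\psi_\ga(x))=\psi_\gc(\psi_\ga(x))$. Since the operators $\psi_\gp$ commute, this rewrites as $\psi_{\ga\gb}(x)=\psi_{\ga\gc}(x)$.

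The crucial input is the equivalence $\gb\sim_\gf\gc\Leftrightarrow \ga\gb\sim_{\ga\gf}\ga\gc$ recorded in \ref{subsec:DR-change-of-f}; so $\ga\gb$ and $\ga\gc$ are $\ga\gf$-equivalent, and the periodicity assumption on $x$ gives $\psi_{\ga\gb}(x)=\psi_{\ga\gc}(x)$ directly. This establishes the desired inclusion on $C$-points for every $C$, hence the scheme-theoretic inclusion (\ref{eq:382}).

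Alternatively, and more in line with the definition of $\per{X}{\gf}$ as a scheme-theoretic intersection of equalizers, one can argue as follows. For each pair $\gb\sim_\gf\gc$, the morphism $\psi_\ga$ factors the equalizer inclusion
\[
\psi_\ga^{-1}\bigl(X(\psi_\gb=\psi_\gc)\bigr) = X(\psi_\gb\circ\psi_\ga=\psi_\gc\circ\psi_\ga) = X(\psi_{\ga\gb}=\psi_{\ga\gc}),
\]
and the last subscheme contains $\per{X}{\ga\gf}$ precisely because $\ga\gb\sim_{\ga\gf}\ga\gc$. Intersecting over all such pairs $(\gb,\gc)$ yields $\per{X}{\ga\gf}\subseteq\psi_\ga^{-1}(\per{X}{\gf})$.

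There is no real obstacle here: the content of the statement is entirely the monoid-theoretic fact that multiplication by $\ga$ carries $\sim_\gf$-classes into $\sim_{\ga\gf}$-classes, which is already recorded, and the rest is a formal manipulation with equalizers and preimages in the category of separated $\scalA$-schemes.
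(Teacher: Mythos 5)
Your first argument is exactly the paper's proof: take a point $x$ of $\per{X}{\ga\gf}$, use the commutativity of the $\psi$-operators together with the implication $\gb\sim_\gf\gc\Rightarrow\ga\gb\sim_{\ga\gf}\ga\gc$ from \ref{subsec:DR-change-of-f}, and conclude $\psi_\ga(x)\in\per{X}{\gf}$. The proposal is correct, and the equalizer reformulation is just a rephrasing of the same point, so nothing further is needed.
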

\begin{proof}
Let $x$ be a point of $\per{X}{\ga\gf}$ with coordinates in some ring $R$.
Let $\gb$ and $\gc$ be $\gf$-equivalent ideals in $\Id_{\Pr}$.
Then we have $\ga\gb\sim_{\ga\gf}\ga\gc$ and hence
	$$
	\psi_{\gb}\big(\psi_{\ga}(x)\big)=\psi_{\ga\gb}(x)=\psi_{\ga\gc}(x)=\psi_{\gc}\big(\psi_{\ga}(x)\big).
	$$
It follows that $\psi_{\ga}(x)\in \per{X}{\gf}$, and this implies~(\ref{eq:382}).
\end{proof}

\subsection{} \emph{Torsion locus $X[\gf]$.}
Write $\gf=\gf_{\fin}\gf_{\infty}$. Then we define the \emph{$\gf$-torsion} locus by
	$$
	X[\gf] = \psi_{\gf_{\fin}}^{-1}(\per{X}{\gf_{\infty}}).
	$$
It follows from proposition~\ref{pro:preimage-of-periodic-locus} that we have an inclusion of subschemes
	\begin{equation}
	\label{eq:periodic-in-torsion}
		\per{X}{\gf} \subseteq X[\gf].
	\end{equation}
For example, if $\scalA=\Z$, $\Pr=\mx_\Q$, and $\gf=(n)\infty$, then this inclusion
is an equality because both sides are $\mu_n$. If however $\gf=(n)$, then the $\gf$-torsion locus
is again $\mu_n$, but the $\gf$-periodic locus $\Gm(\gf)$ is $\mu_m$, where $m={\gcd(2,n)}$. So the 
containment~(\ref{eq:periodic-in-torsion}) can be far from an equality.

\vspace{3mm}
Summing up the results above with the previous section, we have the following:

\begin{thm}
\label{thm:periodic-locus3} 
If $X$ is of finite type over $\scalA$ and $\Pr$ is Chebotarev dense, then we have the following:
\begin{enumerate}
	\item The flat $\gf$-periodic locus $\perfl{X}{\gf}$ is a closed $\gf$-periodic 
	sub-$\Lambda_{\scalA,\Pr}$-scheme of $X$, and it is the maximal flat closed subscheme with these properties. 
	\item We have $\perfl{X}{\gf}=\Spec \ringb$, where $\ringb$ is a finitely 
	generated $\scalA$-algebra, and
	$K\otimes_\scalA \ringb$ is a finite product of abelian extensions of $K$ of conductor dividing $\gf$.	
	\item If $X$ is proper, then $\ringb$ is a finite $\scalA$-algebra.
\end{enumerate}
\end{thm}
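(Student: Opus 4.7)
The plan is to treat the three parts in order, with parts (2) and (3) following almost formally from (1) by invoking Proposition~\ref{pro:periodic-structure2} and standard properties of proper morphisms.

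For part (1), I would first observe that $\per{X}{\gf}$ is closed in the separated scheme $X$, being defined as an intersection of equalizers of pairs of $\scalA$-morphisms to a separated target. Periodicity is built into the definition, so the remaining task is to verify that each $\psi_\gq$ with $\gq\in\Pr$ restricts to an endomorphism of $\per{X}{\gf}$. Because $\gf \mid \gq\gf$, we have the inclusion $\per{X}{\gf} \subseteq \per{X}{\gq\gf}$, and Proposition~\ref{pro:preimage-of-periodic-locus} then yields
\[
\per{X}{\gf} \subseteq \per{X}{\gq\gf} \subseteq \psi_\gq^{-1}(\per{X}{\gf}),
\]
which gives the required stability. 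To pass to the flat part, I would use that the ideal of $\scalA$-torsion in $\sheafO_{\per{X}{\gf}}$ is preserved by every $\scalA$-algebra endomorphism, so each $\psi_\gq$ restricts to $\perfl{X}{\gf}$; periodicity persists since $\perfl{X}{\gf} \subseteq \per{X}{\gf}$. Maximality is then formal: any flat closed $\gf$-periodic sub-$\Lambda_{\scalA,\Pr}$-scheme $Z \subseteq X$ satisfies $\psi_\ga|_Z = \psi_\gb|_Z$ for all $\ga \sim_\gf \gb$, hence sits inside $\per{X}{\gf}$, and flatness forces containment in $\perfl{X}{\gf}$.

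For part (2), the subscheme $\perfl{X}{\gf}$, being closed in $X$, is separated and of finite type over $\scalA$, and by (1) it is flat and $\gf$-periodic. Since $\Pr$ is Chebotarev dense, it is in particular infinite, so both assertions of Proposition~\ref{pro:periodic-structure2} apply: the scheme is affine, reduced, and quasi-finite over $\scalA$ with \'etale generic fiber, and writing $\perfl{X}{\gf}=\Spec \ringb$ gives $\ringb$ as a finitely generated $\scalA$-algebra; moreover $K\otimes_\scalA \ringb$ is a finite product of subextensions of the ray class field $K(\gf)$, i.e.\ abelian extensions of $K$ of conductor dividing $\gf$.

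For part (3), if $X$ is proper over $\scalA$, then the closed subscheme $\perfl{X}{\gf}$ is proper over $\scalA$; combined with the affineness established in (2), this forces $\ringb$ to be finite over $\scalA$, since a proper affine morphism of finite type is finite. There is no individually hard step here, as the substantive work---the EGA semi-continuity arguments and the appeal to Zariski's Main Theorem---has already been carried out in Proposition~\ref{pro:periodic-structure2}; the only point in the present assembly that requires care is the $\psi_\gq$-stability of $\perfl{X}{\gf}$, which rests on Proposition~\ref{pro:preimage-of-periodic-locus} together with the functoriality of the flat-part construction.
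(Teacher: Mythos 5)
Your proof is correct and follows the same overall architecture as the paper's---parts (2) and (3) are handled via Proposition~\ref{pro:periodic-structure2} and properness exactly as intended. Two small variations and one minor omission are worth noting.

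For the $\psi_\gq$-stability in part (1), you derive $\per{X}{\gf}\subseteq\psi_\gq^{-1}(\per{X}{\gf})$ from the chain $\per{X}{\gf}\subseteq\per{X}{\gq\gf}\subseteq\psi_\gq^{-1}(\per{X}{\gf})$, invoking the change-of-$\gf$ containment and Proposition~\ref{pro:preimage-of-periodic-locus}; the paper instead applies the functoriality of $X\mapsto\per{X}{\gf}$ to the $\Lambda$-endomorphism $\psi_\gq\colon X\to X$. Both routes are valid and roughly equally short; yours makes the mechanism slightly more explicit. For part (3), you conclude finiteness from \emph{proper} $+$ \emph{affine}, whereas the paper cites \emph{proper} $+$ \emph{flat} $+$ \emph{generically finite}; your version is arguably the crisper of the two.

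The one gap is that you verify stability of $\perfl{X}{\gf}$ under the $\psi_\gq$ and its $\gf$-periodicity, but the theorem asserts that $\perfl{X}{\gf}$ is a sub-$\Lambda_{\scalA,\Pr}$-\emph{scheme}, which also requires that the restricted $\psi_\gq$ are Frobenius lifts at $\gq$. This follows because the Frobenius endomorphism of the special fiber at $\gq$ is compatible with closed immersions, so the restriction of a Frobenius lift to a stable flat closed subscheme is again a Frobenius lift; the paper records this step explicitly and it should not be left implicit.
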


\begin{proof}
(1) By functoriality, the subscheme $\per{X}{\gf}$ is stable under the
operators $\psi_{\gp}$, for all $\gp\in\Pr$. Again by functoriality, $\perfl{X}{\gf}$ is also stable under
them. Since the $\psi_{\gp}$ are Frobenius lifts on $X$, so are the endomorphisms they induce on the closed
subscheme $\perfl{X}{\gf}$. Since $\perfl{X}{\gf}$ is flat, this defines a $\Lambda_{\scalA,\Pr}$-structure on
$\perfl{X}{\gf}$. It is obviously $\gf$-periodic. Maximality is also clear.

(2) This follows from proposition \ref{pro:periodic-structure2}.

(3) When $X$ is proper, so is $\perfl{X}{\gf}$, since it is a closed subscheme of $X$. It therefore must be 
finite over $\scalA$ because it is flat and generically finite.
\end{proof}

\subsection{} \emph{Conjecture.}
\label{conj:points-exist}
Let $\gr$ denote the product of all the real places.
If $X$ is proper and nonempty, we conjecture that $\per{X}{\gr}$ is nonempty.

When $K=\Q$, this was proved in~\cite{Borger:LRFOE}. It is possible, however, to give an easier argument that 
avoids the deep theorems in \'etale cohomology and $p$-adic Hodge theory used there. One would expect this 
argument to go through for general $K$.

\subsection{} \emph{Computing the periodic locus.}
Is there an algorithm to find equations describing $X(\gf)$, given equations for $X$ and formulas for the
Frobenius lifts $\psi_\gp$? Without such an algorithm, our approach to generating abelian extensions would not be
so explicit. But it also might be an indication that, for theoretical purposes,
there really is more freedom in this approach. 

\section{Interlude on periodic Witt vectors}

In this section, we define periodic Witt vectors and show how they recover the ray
class algebras of the previous section. It will not be used elsewhere in the paper.

\subsection{} \emph{$\Pr$-typical Witt vector rings $W_{\scalA,\Pr}(\ringr)$.} Let us review the generalized
Witt vector rings as defined in the first section of
\cite{Borger:BGWV-I}. Let $\ringr$ be a flat $\scalA$-algebra. Then
the monoid $\Id_{\Pr}$ acts on the product $\scalA$-algebra $\ringr^{\Id_{\Pr}}$ (the so-called ghost ring)
by translation in the exponent. Explicitly,
if $x_{\ga}$ denotes the $\ga$-th component of 
a vector $x\in \ringr^{\Id_{\Pr}}$, then $\psi_{\gb}\colon \ringr^{\Id_{\Pr}}\to \ringr^{\Id_{\Pr}}$
is defined by the formula
	$$
	\big(\psi_{\gb}(x)\big)_{\ga} = x_{\ga\gb}.
	$$

Now consider the set of sub-$\scalA$-algebras $\ringd\subseteq \ringr^{\Id_{\Pr}}$ such that $\ringd$ is
taken to itself by the action of $\Id_{\Pr}$ and such that for each prime $\gp\in\Pr$, the induced endomorphism
$\psi_{\gp}\colon \ringd\to \ringd$ is a Frobenius lift at $\gp$. An elementary argument shows that this
collection of subrings has a maximal element $W_{\scalA,\Pr}(\ringr)$. It is called the ring of $\Pr$-typical
Witt vectors with entries in $\ringr$. It recovers the usual $p$-typical Witt vector functor (restricted to
torsion-free rings) when $\scalA$ is $\Z$ and $\Pr$ consists of the single maximal ideal $p\Z$; it recovers the
big Witt vector functor when instead $\Pr$ consists of all maximal ideals of $\Z$.

This construction is functorial in $\ringr$, and one can show that the functor $W_{\scalA,\Pr}$ is representable
by a flat $\scalA$-algebra $\Lambda_{\scalA,\Pr}$. (Incidentally, this shows that $W_{\scalA,\Pr}$ extends to a
functor on all $\scalA$-algebras, namely the one represented by $\Lambda_{\scalA,\Pr}$. Thus we can extend the
theory of $\Lambda$-structures and Witt vectors to $\scalA$-algebras with torsion, but we will not need this
generality here.) The endomorphisms $\psi_{\ga}$ of the functor $W_{\scalA,\Pr}$ induce endomorphisms of
$\Lambda_{\scalA,\Pr}$. They are in fact Frobenius lifts, and hence $\Lambda_{\scalA,\Pr}$ is a
$\Lambda_{\scalA,\Pr}$-ring.

\subsection{} \emph{Universal property of Witt vector rings.}
\label{subsec:witt-universal-property} 
The Witt vector functor is the right adjoint of the forgetful functor from $\Lambda_{\scalA,\Pr}$-rings to
$\scalA$-algebras. Let us spell out the universal property for future reference.
Let $\ringr$ be a flat $\scalA$-algebra, let $\ringd$ be a flat $\Lambda_{\scalA,\Pr}$-ring, and let 
$\varphi\colon\ringd\to \ringr$ be an $\scalA$-algebra map. Then $\varphi$ lifts to a unique 
$\Lambda_{\scalA,\Pr}$-ring map $\tilde{\varphi}$ to the Witt vector ring:
	$$
	\xymatrix{
	\ringd\ar_{\varphi}[dr]\ar@{-->}_-{\exists!}^-{\tilde{\varphi}}[rr] 
		& & W_{\scalA,\Pr}(\ringr) \ar^{x\mapsto x_{(1)}}[dl] \\
	& \ringr.
	}
	$$
Indeed, it lifts to a unique $\Id_\Pr$-equivariant morphism $\ringd\to \ringr^{\Id_\Pr}$, given by $a\mapsto x$,
where $x_\ga=\varphi(\psi_\ga(a))$. It remains to check that the image $S$ of this map lies in
$W_{\scalA,\Pr}(\ringr)$. But $S$ is torsion free, as a subalgebra of $\ringr^{\Id_\Pr}$; it has an action of
$\Id_\Pr$, as the image of an equivariant map of rings with an $\Id_\Pr$-action; and it satisfies the Frobenius
lift condition because it is an $\Id_\Pr$-equivariant quotient of $\ringd$, which satisfies the Frobenius lift
condition. Therefore $S$ is contained in $W_{\scalA,\Pr}(\ringr)$ by the maximality property of
$W_{\scalA,\Pr}(\ringr)$.

\subsection{} \emph{$\gf$-periodic Witt vector rings $\peru{W}{\gf}(\ringr)$.} 
Let $\gf$ be a cycle on $K$. We define the set
of $\gf$-periodic Witt vectors with entries in an $\scalA$-algebra $\ringr$ as follows
	\begin{equation}
		\peru{W}{\gf}_{\scalA,\Pr}(\ringr) := 
			\{x\in W_{\scalA,\Pr}(\ringr): \psi_{\ga}(x)=\psi_{\gb}(x) \text{ whenever } \ga\sim_{\gf}\gb\}.
	\end{equation}
In other words, if we view the functor $W_{\scalA,\Pr}$ as a scheme, then $\peru{W}{\gf}_{\scalA,\Pr}(\ringr)$ is the set of 
$\ringr$-valued points on its $\gf$-periodic locus.

When $\ringr$ is flat, the periodic Witt vectors can be described simply in terms of their ghost components:
	\begin{equation}
	\label{def:periodic-witt}
		\peru{W}{\gf}_{\scalA,\Pr}(\ringr) = \{x\in W_{\scalA,\Pr}(\ringr): x_{\ga}=x_{\gb} \text{ whenever } \ga\sim_{\gf}\gb\}.
	\end{equation}
Indeed, this follows from the implication $\ga\sim_{\gf}\gb \Rightarrow \ga\gc\sim_{\gf}\gb\gc$.
In other words, we have
	\begin{equation}
	\label{eq:periodic-witt-intersection}
		\peru{W}{\gf}_{\scalA,\Pr}(\ringr) = W_{\scalA,\Pr}(\ringr) \cap \ringr^{\DR_{\Pr}(\gf)},
	\end{equation}
as subrings of the ghost ring $\ringr^{\Id_{\Pr}}$.

\subsection{} \emph{Example.}
Suppose $\scalA=\Z$, $\Pr=\mx_\Q$, and $\gf=(n)\infty$ where $n\geq 1$. Then an
$\gf$-periodic Witt vector with entries in a torsion-free ring $\ringr$ is just a big Witt vector whose ghost
components are periodic with period dividing $n$. This is the reason for the name. For example, if $\zeta_n$ is
an $n$-th root of unity, then the Teichm\"uller element 
$$
[\zeta_n] := \langle \zeta_n,\zeta_n^2,\zeta_n^3,\dots \rangle
$$ 
is $n\infty$-periodic. (This is even true when
$\ringr$ is not torsion free, by functoriality and because the universal ring with an $n$-th root of unity is
$\Z[x]/(x^n-1)$, which is torsion free.)

\begin{proposition}
\label{pro:w-per-is-sub-lambda-ring}
$\peru{W}{\gf}_{\scalA,\Pr}(\ringr)$
is an $\gf$-periodic sub-$\Lambda_{\scalA,\Pr}$-ring of $W_{\scalA,\Pr}(\ringr)$, for
any flat $\scalA$-algebra $\ringr$.
\end{proposition}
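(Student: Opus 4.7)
The plan is to verify four things in order: that $\peru{W}{\gf}_{\scalA,\Pr}(\ringr)$ is an $\scalA$-subalgebra of $W_{\scalA,\Pr}(\ringr)$; that it is stable under every $\psi_\gc$ with $\gc\in\Id_\Pr$; that the restriction of each $\psi_\gp$ is still a Frobenius lift at $\gp$; and that the resulting $\Lambda_{\scalA,\Pr}$-structure is $\gf$-periodic. All of these rest on the flat-ring description (\ref{def:periodic-witt}) of the periodic Witt vectors as the vectors whose ghost components are constant on $\sim_\gf$-equivalence classes.

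First I would note that, because $\ringr$ is flat over $\scalA$, the conditions $x_\ga = x_\gb$ cut out an $\scalA$-subalgebra of the ghost ring $\ringr^{\Id_\Pr}$, since the coordinatewise multiplication preserves equalities component by component; intersecting with $W_{\scalA,\Pr}(\ringr)$ gives a sub-$\scalA$-algebra. Stability under $\psi_\gc$ is then immediate from the implication $\ga\sim_\gf\gb\Rightarrow \gc\ga\sim_\gf\gc\gb$: indeed $(\psi_\gc x)_\ga = x_{\gc\ga} = x_{\gc\gb} = (\psi_\gc x)_\gb$. In particular $\gf$-periodicity follows as well, since for $\ga\sim_\gf\gb$ and arbitrary $\gc\in\Id_\Pr$ one has $\gc\ga\sim_\gf\gc\gb$ and hence $(\psi_\ga x)_\gc = x_{\gc\ga} = x_{\gc\gb} = (\psi_\gb x)_\gc$ for every $x\in\peru{W}{\gf}_{\scalA,\Pr}(\ringr)$.

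The main obstacle is the Frobenius lift condition on the subring, i.e.\ verifying that $\psi_\gp(x) - x^{N(\gp)}$ lies in $\gp\,\peru{W}{\gf}_{\scalA,\Pr}(\ringr)$ (not merely in $\gp\,W_{\scalA,\Pr}(\ringr)\cap \peru{W}{\gf}_{\scalA,\Pr}(\ringr)$). My plan is to reduce this to flatness of the quotient. Concretely, I would show that the quotient $\scalA$-module
\[
  W_{\scalA,\Pr}(\ringr)\big/\peru{W}{\gf}_{\scalA,\Pr}(\ringr)
\]
is torsion free: if $sx\in\peru{W}{\gf}_{\scalA,\Pr}(\ringr)$ for a nonzero $s\in\scalA$ and $x\in W_{\scalA,\Pr}(\ringr)$, then $s(x_\ga-x_\gb)=0$ in the flat (hence $\scalA$-torsion-free) ring $\ringr$ for all $\ga\sim_\gf\gb$, so $x_\ga=x_\gb$ and $x\in\peru{W}{\gf}_{\scalA,\Pr}(\ringr)$. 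Over a Dedekind domain, torsion free is flat, so tensoring the inclusion $\peru{W}{\gf}_{\scalA,\Pr}(\ringr)\hookrightarrow W_{\scalA,\Pr}(\ringr)$ with $\scalA/\gp$ stays injective. Since $\psi_\gp(x)-x^{N(\gp)}$ lies in $\peru{W}{\gf}_{\scalA,\Pr}(\ringr)$ by the first two steps and maps to zero in $W_{\scalA,\Pr}(\ringr)/\gp$, it already maps to zero in $\peru{W}{\gf}_{\scalA,\Pr}(\ringr)/\gp\,\peru{W}{\gf}_{\scalA,\Pr}(\ringr)$, giving the desired congruence.

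Assembling the four steps shows that $\peru{W}{\gf}_{\scalA,\Pr}(\ringr)$ is a sub-$\Lambda_{\scalA,\Pr}$-ring of $W_{\scalA,\Pr}(\ringr)$ and is $\gf$-periodic. No step requires more than the flatness of $\ringr$, the ghost-component description (\ref{def:periodic-witt}), and the elementary fact that $\sim_\gf$ is stable under multiplication by elements of $\Id_\Pr$.
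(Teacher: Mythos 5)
Your proposal is correct and follows essentially the same approach as the paper: the same reduction of the Frobenius-lift condition to the equality $\gp\,\peru{W}{\gf}_{\scalA,\Pr}(\ringr)=\gp W_{\scalA,\Pr}(\ringr)\cap\peru{W}{\gf}_{\scalA,\Pr}(\ringr)$, with the other three points handled identically via the ghost description and multiplicativity of $\sim_\gf$. Your way of proving that equality (torsion-freeness of the quotient, hence flatness over the Dedekind domain $\scalA$, hence injectivity after $\otimes\,\scalA/\gp$) is just a repackaging of the paper's diagram chase with $\gp\otimes(-)$, so the two arguments are the same in substance.
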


\begin{proof}
It is clearly a sub-$\scalA$-algebra of $W_{\scalA,\Pr}(\ringr)$. 
It is also preserved by all $\psi_{\gp}$ operators ($\gp\in\Pr$) because of the implication
$\ga\sim_{\gf}\gb \Rightarrow \gp\ga\sim_{\gf} \gp\gb$. The family of operators $\psi_\ga$ is
also $\gf$-periodic on $\peru{W}{\gf}_{\scalA,\Pr}(\ringr)$ by definition.

So all that remains is to check the Frobenius lift property.
For $\gp\in\Pr$ and any $x\in \peru{W}{\gf}_{\scalA,\Pr}(\ringr)$, we have
	$$
	\psi_{\gp}(x)-x^{N(\gp)}\in \gp W_{\scalA,\Pr}(\ringr) \cap \peru{W}{\gf}_{\scalA,\Pr}(\ringr).
	$$
Thus it is enough to show that the containment
	\begin{equation}
		\label{eq:per-Frob-lift}
		\gp\peru{W}{\gf}_{\scalA,\Pr}(\ringr) \subseteq \gp W_{\scalA,\Pr}(\ringr) \cap \peru{W}{\gf}_{\scalA,\Pr}(\ringr)
	\end{equation}
is an equality. This follows from the diagram
	$$
	\xymatrix{
	0 \ar[r]
		& \peru{W}{\gf}(\ringr) \ar[r]
		& W(\ringr) \ar^-{\gamma}[r]
		& \prod_{\ga\sim_{\gf}\gb} \ringr \\
	0 \ar[r] 
		& \gp\otimes_{\scalA}\peru{W}{\gf}(\ringr) \ar[r]\ar@{>->}[u]
		&  \gp\otimes_{\scalA}W(\ringr) \ar^-{\gp\otimes\gamma}[r]\ar@{>->}[u]
		&  \gp\otimes_{\scalA}\prod_{\ga\sim_{\gf}\gb} \ringr\ar@{>->}[u] \\
	}
	$$
of exact sequences, where $\gamma(x)=(\dots,x_{\ga}-x_{\gb},\dots)$.
The vertical arrows are the evident multiplication maps $a\otimes b\mapsto ab$;
they are injective because all modules on the top row are flat.
This plus the exactness of the bottom row implies that (\ref{eq:per-Frob-lift}) is an equality.
\end{proof}

\subsection{} \emph{Remark: Plethysic algebra.}
The formal concepts above can be expressed in the language of plethystic algebra \cite{Borger-Wieland:PA}. Let
$\Lambda_{\scalA,\Pr}$ be the $A$-algebra representing the functor $W_{\scalA,\Pr}$, and let
$\peru{\Lambda}{\gf}_{\scalA,\Pr}$ be the one representing the functor $\peru{W}{\gf}_{\scalA,\Pr}$. Then the
inclusion of functors $\peru{W}{\gf}_{\scalA,\Pr} \subseteq W_{\scalA,\Pr}$ induces a surjection
$\Lambda_{\scalA,\Pr}\to \peru{\Lambda}{\gf}_{\scalA,\Pr}$. It is not hard to show that this has the structure
of a morphism of $\scalA$-plethories and an action of $\peru{\Lambda}{\gf}_{\scalA,\Pr}$ is the same as an
$\gf$-periodic $\Lambda_{\scalA,\Pr}$-structure. It follows that the forgetful functor from $\gf$-periodic
$\Lambda_{\scalA,\Pr}$-rings to all $\Lambda_{\scalA,\Pr}$-rings has both a left and a right adjoint. The right
adjoint outputs the $\gf$-periodic elements of the given $\Lambda_{\scalA,\Pr}$-ring.
This provides one approach to
$\Lambda$-structures and Witt vectors that works smoothly in the presence of torsion.

\subsection{} \emph{Universal property of periodic Witt vector rings.}
\label{subsec:periodic-witt-univ-prop}
Let $\ringr$ be a flat $\scalA$-algebra,
let $\ringd$ be a flat $\gf$-periodic $\Lambda_{\scalA,\Pr}$-ring, and let
$\varphi\colon \ringd\to \ringr$ be an $\scalA$-algebra map. Then $\varphi$
lifts to a unique $\Lambda_{\scalA,\Pr}$-ring
map to the $\gf$-periodic Witt vector ring:
	$$
	\xymatrix{
	\ringd\ar_{\varphi}[dr]\ar@{-->}_-{\exists!}^-{\tilde{\varphi}}[rr] 
		& & \peru{W}{\gf}_{\scalA,\Pr}(\ringr) \ar^{x\mapsto x_{(1)}}[dl] \\
	& \ringr.
	}
	$$
Indeed, by the universal property of Witt vectors (\ref{subsec:witt-universal-property}),
it lifts to a unique $\Lambda_{\scalA,\Pr}$-map
$\ringd\to W_{\scalA,\Pr}(\ringr)$. But since $\ringd$ is $\gf$-periodic, the image 
is contained in $\peru{W}{\gf}_{\scalA,\Pr}(\ringr)$.

\begin{proposition}
	Let $\Kb$ be a separable closure of $K$, and let $\intO$ denote the integral closure of $\scalA$ in $\Kb$. 
	Let
	$\rcl{\scalA,\Pr}{\gf}$ denote the ray class $\Lambda_{\scalA,\Pr}$-ring of conductor $\gf$, as defined in
	\ref{subsec:ray-class-lambda-rings}. Then we have an isomorphism
		\begin{equation}
			\tilde{\beta}\:\rcl{\scalA,\Pr}{\gf} \longrightisomap \peru{W}{\gf}_{\scalA,\Pr}(\intO)
		\end{equation}
	of $\Lambda_{\scalA,\Pr}$-rings, where the map $\tilde{\beta}$ is the 
	lift, in the sense of \ref{subsec:periodic-witt-univ-prop}, of the projection 
	$\beta\:\rcl{\scalA,\Pr}{\gf}\to\intO$ defined in \ref{subsec:ray-class-lambda-rings}.
\end{proposition}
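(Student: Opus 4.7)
The strategy is to construct an inverse $\varphi$ to $\tilde\beta$ via the universal property of the ray class algebra in \ref{subsec:ray-class-lambda-rings}, and then to deduce that $\tilde\beta$ and $\varphi$ are mutually inverse by two applications of uniqueness. The map $\tilde\beta$ itself is the lift of $\beta\:\rcl{\scalA,\Pr}{\gf}\to\intO$ along the universal property of periodic Witt vectors in \ref{subsec:periodic-witt-univ-prop}, so explicitly $\tilde\beta(a)_\ga=\beta(\psi_\ga(a))$.

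The main work is to show that $\peru{W}{\gf}_{\scalA,\Pr}(\intO)$ is a reduced, finite flat, $\gf$-periodic $\Lambda_{\scalA,\Pr}$-ring, since the universal property of the ray class algebra is stated only for such rings. Reducedness, flatness, and $\gf$-periodicity are inherited from the ambient ring $\intO^{\DR_{\Pr}(\gf)}$ and the definition of $\peru{W}{\gf}$. For finiteness I would show that every element $x=(x_\eta)_{\eta\in\DR_{\Pr}(\gf)}$ of $\peru{W}{\gf}_{\scalA,\Pr}(\intO)$, regarded as a function $\DR_{\Pr}(\gf)\to\Kb$, is $\G$-equivariant, where $\G$ acts on $\DR_{\Pr}(\gf)$ via its Artin projection $\G\to\Cl(\gf)\hookrightarrow\DR_{\Pr}(\gf)$. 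Fix $g\in\G$ and $\eta\in\DR_{\Pr}(\gf)$; since $x_\eta$ generates a finite extension $L/K$, Chebotarev density of $\Pr$, interpreted in the Galois closure of $L$ and $K(\gf)$, furnishes infinitely many primes $\gp\in\Pr$ coprime to $\gf$, together with primes $\bar\gp$ of $\intO$ above $\gp$, such that $g$ restricts to $F_{\bar\gp}$ on that closure. The Frobenius-lift condition on the Witt vector gives $x_{\gp\eta}\equiv x_\eta^{N(\gp)}\pmod{\gp\intO}$, while the defining property of $F_{\bar\gp}$ gives $g(x_\eta)\equiv x_\eta^{N(\gp)}\pmod{\bar\gp}$, and $\gp\eta=g\eta$ in $\DR_{\Pr}(\gf)$ because the Artin symbol of $\gp$ in $\Cl(\gf)$ equals the image of $g$. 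Subtracting, the difference $g(x_\eta)-x_{g\eta}$ lies in infinitely many distinct primes of $\intO$, hence vanishes. This places $\peru{W}{\gf}_{\scalA,\Pr}(\intO)$ inside $\Map_{\G}(\DR_{\Pr}(\gf),\Kb)=\prod_{\gd\mid\gf_{\fin},\,\gd\in\Id_{\Pr}}K(\gf\gd^{-1})$, and since its elements are integral over $\scalA$, it lies in the integral closure of $\scalA$ in this finite $K$-algebra, which is a finite $\scalA$-module; noetherianness of $\scalA$ then forces $\peru{W}{\gf}_{\scalA,\Pr}(\intO)$ to be finite over $\scalA$.

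Granted this, the universal property of $\rcl{\scalA,\Pr}{\gf}$ applied with $\ringd=\peru{W}{\gf}_{\scalA,\Pr}(\intO)$ and $\alpha=(x\mapsto x_{(1)})$ produces a unique $\Lambda_{\scalA,\Pr}$-map $\varphi$ with $\beta\circ\varphi=\alpha$. Then $\beta\circ(\varphi\circ\tilde\beta)=\alpha\circ\tilde\beta=\beta$, because $\tilde\beta(a)_{(1)}=\beta(\psi_{(1)}(a))=\beta(a)$; so uniqueness in the universal property of $\rcl{\scalA,\Pr}{\gf}$ forces $\varphi\circ\tilde\beta=\id$. Symmetrically, $\alpha\circ(\tilde\beta\circ\varphi)=\beta\circ\varphi=\alpha$, so uniqueness in the universal property of $\peru{W}{\gf}_{\scalA,\Pr}(\intO)$ forces $\tilde\beta\circ\varphi=\id$. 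The main obstacle is the Chebotarev step: the paper's notion of Chebotarev density handles abelian Galois groups directly, so either one needs to know a priori that $x_\eta$ lies in an abelian extension of $K$, or one must have enough density of $\Pr$ in the non-abelian Galois closure to run the Frobenius comparison above cleanly.
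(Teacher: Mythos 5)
Your overall architecture is the same as the paper's: both $\rcl{\scalA,\Pr}{\gf}$ and $\peru{W}{\gf}_{\scalA,\Pr}(\intO)$ satisfy the same universal property except that the ray class algebra's is restricted to reduced finite flat algebras, so everything reduces to showing $\peru{W}{\gf}_{\scalA,\Pr}(\intO)$ is reduced and finite flat over $\scalA$; your two-sided uniqueness argument at the end is exactly what the paper means by ``characterized by the same universal property,'' and reducedness and flatness do follow from the embedding into $\intO^{\DR_{\Pr}(\gf)}$. The gap is the one you flag yourself, in the finiteness step, and it is genuine. You need, for each $g\in\G$, primes $\gp\in\Pr$ prime to $\gf$ whose Frobenius in the Galois closure of $L\cdot K(\gf)$ is $g$. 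But $L$ is a priori an arbitrary finite extension of $K$ --- that its components lie in abelian extensions is essentially what you are trying to prove --- and the paper's hypothesis of Chebotarev density only controls images in \emph{ray class groups}. A Chebotarev dense $\Pr$ need not contain primes realizing a prescribed Frobenius in a non-abelian extension, so the chain $g(x_\eta)\equiv x_\eta^{N(\gp)}\equiv x_{\gp\eta}=x_{g\eta}$ cannot be run as stated, and you offer no way to close this.

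The paper closes exactly this gap by not attacking abelianness head-on with Chebotarev. It first writes $\peru{W}{\gf}_{\scalA,\Pr}(\intO)=\colim_L\peru{W}{\gf}_{\scalA,\Pr}(\tO{L})$, using that a periodic Witt vector has only finitely many distinct ghost components because $\DR_{\Pr}(\gf)$ is finite, hence lies in $W_{\scalA,\Pr}(\tO{L})$ for some finite $L$. Each $\peru{W}{\gf}_{\scalA,\Pr}(\tO{L})$ is a reduced, flat, finitely generated, $\gf$-periodic $\Lambda_{\scalA,\Pr}$-ring by proposition~\ref{pro:w-per-is-sub-lambda-ring}, so theorem~\ref{global-thm2} applies: being its own integral $\Lambda$-model, its $\G\times\Id_{\Pr}$-action factors through $\DR_{\Pr}(\gf)$ and it embeds in $\prod_{\gd}\tO{K(\gf\gd^{-1})}$, giving a rank bound uniform in $L$. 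The abelianness there comes from proposition~\ref{abelian}, where it is a \emph{consequence} of the integral $\Lambda$-structure --- Frobenius elements at primes of $\Pr$ unramified in the model act as the commuting operators $\psi_\gp$, so the image of $\G$ is abelian --- rather than an input to a Frobenius-prescription argument. Replacing your direct comparison with this reduction to finite level and the citation of theorem~\ref{global-thm2} completes the proof.
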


\begin{proof}
The $\Lambda_{\scalA,\Pr}$-rings $\rcl{\scalA,\Pr}{\gf}$ and $\peru{W}{\gf}_{\scalA,\Pr}(\intO)$ are
characterized by the same universal property, except that the one for $\rcl{\scalA,\Pr}{\gf}$ is restricted to
algebras $\ringd$ that are reduced and finite flat over $\scalA$. So it is enough to show that
$\peru{W}{\gf}_{\scalA,\Pr}(\intO)$ is itself reduced and finite flat over $\scalA$. Since it is a subalgebra of
$\prod_{\DR_{\Pr}(\gf)}\intO$ by definition, it is reduced and flat. So it is enough to show that it is finite
over $\scalA$.

First, observe that we have
	\begin{equation}
	\label{eq:24}
		\colim_L \peru{W}{\gf}_{\scalA,\Pr}(\tO{L})
			= \peru{W}{\gf}_{\scalA,\Pr}(\intO),
	\end{equation}
where $L$ runs over the finite extensions of $K$ contained in $\Kb$. Indeed, for any
$x\in\peru{W}{\gf}_{\scalA,\Pr}(\intO)$, let $L$ denote the extension $K(\dots,x_\ga,\dots)$, where the $x_\ga$ are
the (ghost) components of $x$. It follows that $x\in W_{\scalA,\Pr}(L)\cap W_{\scalA,\Pr}(\intO)$. Because $W_{\scalA,\Pr}$ is representable, we also have
	$$
	W_{\scalA,\Pr}(L)\cap W_{\scalA,\Pr}(\intO) = W_{\scalA,\Pr}(L\cap\intO),
	$$
and hence $x\in W_{\scalA,\Pr}(L\cap\intO)$. 
Because $x$ is also $\gf$-periodic,
we have 
$$
x\in\peru{W}{\gf}_{\scalA,\Pr}(\tO{L}),
$$ 
as desired. It remains to show that $L$
is a finite extension of $K$. This holds because $x$ is $\gf$-periodic and $\DR_{\Pr}(\gf)$ is finite, and so
$x$ has only finitely many distinct ghost components. This proves~(\ref{eq:24}).

Therefore it is enough to prove that $\peru{W}{\gf}_{\scalA,\Pr}(\tO{L})$ has bounded rank as $L$ runs over all
finite extensions of $K$. By proposition \ref{pro:w-per-is-sub-lambda-ring},
$\peru{W}{\gf}_{\scalA,\Pr}(\tO{L})$ is a flat reduced $\gf$-periodic $\Lambda_{\scalA,\Pr}$-ring. Being a
subring of $\prod_{\DR_{\Pr}(\gf)}\tO{L}$, it is also finitely generated as an $\scalA$-module. Therefore by
theorem \ref{global-thm2}, it is contained in the product ring $\prod_{\gd} \tO{K(\gf\gd^{-1})}$, where $\gd$
runs over $\Id_{\Pr}$ with $\gd\mid\gf_{\fin}$. Therefore its rank as $L$ varies is bounded. 
\end{proof}

\subsection{} \emph{Example.}
\label{subsec:cyclotomic-ray-class-algebra}
If $\scalA=\Z$, $P=\mx_\Q$, and $\gf=(n)\infty$ with $n\geq 1$, then for any primitive
$n$-th root of unity $\zeta_n$, there is an isomorphism
	$$
	\Z[x]/(x^n-1) \longrightisomap \peru{W}{n\infty}_{\Z,\Pr}(\maxO_{\bar{\Q}})
	$$
given by $x\mapsto [\zeta_n]$. Given the above, this is the second theorem of our first 
paper~\cite{Borger-deSmit:Integral-models}.

On the other hand, if $\gf=(1)$ but $K$ is arbitrary and $P=\mx_K$, then we have
	$$
	\peru{W}{1}_{\maxO_K,\Pr}(\maxO_{\bar{K}}) = \maxO_H
	$$
where $H$ is the Hilbert class field of $K$. More generally, if $\gf$ is supported at infinity, then
$\peru{W}{\gf}_{\maxO_K,\Pr}(\maxO_{\bar{K}})$ is the ring of integers in the corresponding narrow
Hilbert class field.

\begin{corollary}
\label{cor:periodic-W-class-fields}
	The periodic Witt vector ring is generically a product of ray class fields:
		\begin{equation}
			K\otimes_{\scalA} \peru{W}{\gf}_{\scalA,\Pr}(\intO) = 
			\prod_{\substack{\gd\in\Id_{\Pr}\\\gd\mid\gf_{\fin}}}K(\gf\gd^{-1}).
		\end{equation}
	The ray class field $K(\gf)$ is the image of the projection
		\begin{equation}
			K\otimes_{\scalA} \peru{W}{\gf}_{\scalA,\Pr}(\intO) \longmap \Kb, \quad x\mapsto x_{(1)}.			
		\end{equation}
\end{corollary}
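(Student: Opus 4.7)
The proof plan is to extract this corollary essentially directly from the isomorphism $\tilde{\beta}\:\rcl{\scalA,\Pr}{\gf} \longrightisomap \peru{W}{\gf}_{\scalA,\Pr}(\intO)$ established in the preceding proposition, together with the structural description of the ray class algebra from \ref{subsec:ray-class-lambda-rings}. There is almost no new work to do; the challenge is just bookkeeping of the distinguished point.

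First I would tensor the isomorphism $\tilde{\beta}$ with $K$ over $\scalA$. Since $\peru{W}{\gf}_{\scalA,\Pr}(\intO)$ is flat over $\scalA$, this yields a $K$-algebra isomorphism
\[
K\otimes_{\scalA}\rcl{\scalA,\Pr}{\gf}\longrightisomap K\otimes_\scalA\peru{W}{\gf}_{\scalA,\Pr}(\intO).
\]
Combining with the identification
\[
K\otimes_{\scalA}\rcl{\scalA,\Pr}{\gf}=\prod_{\gd\in\Id_\Pr,\,\gd\mid\gf_\fin}K(\gf\gd^{-1})
\]
recorded in \ref{subsec:ray-class-lambda-rings} gives the first claim.

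For the second claim I would trace the projection $x\mapsto x_{(1)}$ through the isomorphism. By the universal property defining $\tilde{\beta}$ in \ref{subsec:periodic-witt-univ-prop}, the composite
\[
\rcl{\scalA,\Pr}{\gf}\longlabelmap{\tilde{\beta}}\peru{W}{\gf}_{\scalA,\Pr}(\intO)\longmap \intO,\qquad x\mapsto x_{(1)},
\]
coincides with the structure map $\beta\:\rcl{\scalA,\Pr}{\gf}\to\intO$ coming from the distinguished (identity) point of $\DR_\Pr(\gf)$. As observed in \ref{subsec:ray-class-lambda-rings}, after tensoring with $K$ this $\beta$ is precisely the projection to the factor indexed by $\gd=(1)$ in the product decomposition above, namely $K(\gf\cdot(1)^{-1})=K(\gf)$. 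Therefore the image of $x\mapsto x_{(1)}$ in $\Kb$ is exactly the ray class field $K(\gf)$.

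The only point that deserves slight care is the compatibility of the universal property of $\tilde{\beta}$ with the identification of $\beta$ as projection onto the $\gd=(1)$ component; but this is built into the definitions in \ref{subsec:ray-class-lambda-rings}, where $\rcl{\scalA,\Pr}{\gf}$ is constructed from $\DR_\Pr(\gf)$ pointed at its identity, and the decomposition (\ref{DR-decomp}) places this identity in the summand indexed by $\gd=(1)$. No genuine obstacle arises.
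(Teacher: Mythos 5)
Your proposal is correct and is exactly the argument the paper intends: the corollary is stated without proof precisely because it follows by tensoring the isomorphism $\tilde{\beta}$ with $K$, invoking the decomposition $K\otimes_{\scalA}\rcl{\scalA,\Pr}{\gf}=\prod_{\gd}K(\gf\gd^{-1})$ from \ref{subsec:ray-class-lambda-rings}, and noting that the universal property identifies $x\mapsto x_{(1)}$ with $\beta$, the projection onto the $\gd=(1)$ factor $K(\gf)$. No discrepancy with the paper's approach.
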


\subsection{} \emph{Remark: Periodic Witt vectors and explicit class field theory.}
\label{subsec:periodic-W-and-explicit-CFT}
It follows from corollary~\ref{cor:periodic-W-class-fields} that any ray class field $K(\gf)$ is generated by
the first coordinate $x_{(1)}$ of the $\gf$-periodic $\intO$-points $x$ on the affine scheme
$W_{\scalA,\Pr}=\Spec(\Lambda_{\scalA,\Pr})$. We emphasize however that $W_{\scalA,\Pr}$ is not of
finite type, and so this does not give an explicit method of producing a polynomial whose roots generate 
$K(\gf)$. In fact, the periodic locus $\peru{W}{\gf}_{\scalA,\Pr}$ is itself not even of finite
type. For instance, if $K=\Q$, $P=\mx_\Q$, and $\gf=\infty$, then $\Lambda^{(\gf)}_{\scalA,\Pr}$ is
isomorphic to the binomial ring, the subring of $\Q[x]$ generated by the binomial coefficients $\binom{x}{n}$,
and this is not finitely generated as a ring. Therefore to find a point of $W^{(\gf)}_{\scalA,\Pr}$, one has to
solve infinitely many simultaneous polynomial equations with coefficients in $\scalA$.

To be sure, $K\otimes_\scalA \Lambda^{(\gf)}_{\scalA,\Pr}$ is finitely generated as a $K$-algebra,
because on $K$-algebras it represents the periodic ghost functor $\ringr\mapsto \ringr^{\DR_\Pr(\gf)}$ and
$\DR_\Pr(\gf)$ is finite. Therefore a periodic Witt vector is determined by finitely many components, namely its
ghost components. However to give a criterion for a periodic ghost vector to be a Witt vector, we need
infinitely many congruences between polynomials in the ghost components. If we add variables to express these 
congruences as equations, we will need infinitely many new variables.

\section{$K=\Q$: the toric line and the Chebyshev line} 

In this section, we consider the case where $\scalA$ is $\Z$ and $\Pr$ is the set of all maximal ideals of $\Z$.
For any integer $n\geq 1$, let us write $\psi_n=\psi_{(n)}$ and $\Lambda=\Lambda_{\Z,\Pr}$.

\subsection{} \emph{The toric line and cyclotomic extensions.}
\label{subsec:toric-line}
Define the toric $\Lambda$-structure on $\Gm=\Spec\Z[x^{\pm 1}]$ to be the one
given by $\psi_{p}(x)=x^p$, for all primes numbers $p$. (We use this name because it is
a particular case of the natural $\Lambda$-structure on any toric variety.)
Observe that it extends uniquely to the affine and projective lines.

For each cycle of the form $(n)\infty$, with $n\geq 1$, the periodic locus
$\per{\Gm}{n\infty}$ is simply $\mu_n=\Spec\Z[x]/(x^n-1)$.
In other words, the containment (\ref{eq:periodic-in-torsion}) is an equality.
Thus we have 
	$$
	\Q\otimes_{\Z}\per{\Gm}{n\infty}=\coprod_{d|n} \Spec \Q(\zeta_d).
	$$
So in this case, the $n\infty$-periodic locus of the $\Lambda$-scheme $\Gm$
does in fact generate the ray class field of conductor $n\infty$.

\subsection{} \emph{The Chebyshev line and real-cyclotomic extensions.}
\label{subsec:chebyshev}
The toric $\Lambda$-ring $\Z[x^{\pm 1}]$ above has an
automorphism $\sigma$ defined by $\sigma(x)=x^{-1}$. The fixed subring is easily seen to be
a sub-$\Lambda$-ring and freely generated as a ring by $y=x+x^{-1}$.
The elements $\psi_{p}(y)\in\Z[y]$ are given by Chebyshev polynomials
$$
\psi_n(y)=2+\prod_{i=0}^{n-1} (y-\zeta_n^i-\zeta_n^{-i}),
$$
where $\zeta_n$ is a primitive
$n$-th root of unity. For example, we have
	$$
	\psi_{2}(y) = y^2-2, \quad\quad \psi_{3}(y)=y^3-3y, \quad\quad \psi_{5}(y)=y^5-5y^3+5y,
	\quad\quad\dots.
	$$
This gives the affine line $Y=\Spec \Z[y]$ a $\Lambda$-scheme structure. We call it the Chebyshev
$\Lambda$-structure. It also extends uniquely to the projective line. (Incidentally, $\Z[y]$ is
isomorphic as a $\Lambda$-ring to the Grothendieck group of the Lie algebra $\mathfrak{sl}_2$. By 
Clauwens's theorem~\cite{Clauwens:Line}, this and the toric $\Lambda$-structure are the only two 
$\Lambda$-structures on the affine line, up to isomorphism.)

Now consider a cycle $\gf$ with trivial infinite part. Write $\gf=(n)$, 
where $n\geq 1$, and write $Y(n)=Y(\gf)$ for the periodic locus.
We have
	$$
	Y[1] = Y(1) = \Spec \Z[y]/(y-2),
	$$
and hence the $n$-torsion locus is
	$$
	Y[n] = \psi_n^{-1} (\Spec \Z[y]/(y-2))
	$$
or more simply, $Y[n]=\psi_n^{-1}(2)$. Observe that $Y[n]$ is not reduced when $n\geq 3$: for
instance if $n$ is odd, we have
	$$
	\psi_n(y)-2 = \prod_{i=0}^{n-1} (y-\zeta_n^i-\zeta_n^{-i}) 
		= (y-2)\prod_{i=1}^{\frac{n-1}{2}} (y-\zeta_n^i-\zeta_n^{-i})^2.
	$$
However the periodic locus is reduced. In fact, we have
	$$
	\Q\otimes_\Z \sheafO(Y(n)) = \prod_{d|n} \Q(\zeta_d+\zeta_d^{-1}),
	$$
by the following more precise integral result:

\begin{proposition}\label{pro:chebyshev-periodic-locus2}
	\begin{enumerate}
		\item The periodic locus $\per{Y}{n}$ is flat and reduced, and the inclusion 
			$$
			i:\per{Y}{n}\to Y[n]_{\red}
			$$ 
			is an isomorphism.
		\item The map $\sheafO(\per{Y}{n})\to \Z[x]/(x^n-1)$ is injective. If $n$ is odd,
			its image is the subring of invariants under the involution $\sigma:x\mapsto x^{-1}$.
			If $n$ is even, its image is the span of $\{1,x+x^{-1},\dots,x^{n/2-1}+x^{1-n/2},2x^{n/2}\}$.
	\end{enumerate}
\end{proposition}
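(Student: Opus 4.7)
The plan is to identify the defining ideal $I_n \subset \Z[y]$ of $\per{Y}{n}$ with $(\prod_{d\mid n} P_d(y))$, where $P_d$ denotes the minimal polynomial of $\zeta_d + \zeta_d^{-1}$ over $\Q$. Given this, part (1) follows at once: $\Z[y]/\prod_d P_d$ is $\Z$-flat and reduced (its generator is monic with pairwise coprime irreducible factors over $\Q$), and it coincides with $\sheafO(Y[n]_\red)$. The latter identification comes from factoring $\psi_n(y) - 2$ over $\overline{\Q}$: its roots are $\zeta + \zeta^{-1}$ for $n$-th roots of unity $\zeta$, and pairing $\zeta \leftrightarrow \zeta^{-1}$ yields multiplicities $m_1 = 1$, $m_2 = 1$ (if $n$ is even), and $m_d = 2$ for $d \geq 3$. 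Hence $\sqrt{(\psi_n - 2)} = \prod_{d\mid n} P_d$, and $Y[n]_\red$ has $\Z$-rank $(n+1)/2$ if $n$ is odd and $n/2+1$ if $n$ is even.

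The inclusion $I_n \subseteq (\prod_d P_d)$ is a routine application of Gauss's lemma: any generator $\psi_a(y) - \psi_b(y)$ of $I_n$, with $a \sim_{(n)} b$, vanishes at $y = \zeta_d + \zeta_d^{-1}$ for every $d \mid n$, because the condition $a \equiv \pm b \pmod n$ together with $d \mid n$ gives $\zeta_d^a + \zeta_d^{-a} = \zeta_d^b + \zeta_d^{-b}$. So $\prod_d P_d$ divides $\psi_a - \psi_b$ in $\Q[y]$, and hence in $\Z[y]$ since $\prod_d P_d$ is monic.

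The reverse inclusion $(\prod_d P_d) \subseteq I_n$ is the heart of the proof. For $n \leq 2$ it is immediate, since $\prod_d P_d = \psi_n(y) - 2 \in I_n$ by the torsion containment (\ref{eq:periodic-in-torsion}). For $n \geq 3$, the plan is to exhibit a single equivalent pair $(a, b)$ with $\psi_a(y) - \psi_b(y) = -\prod_d P_d$: take $(a, b) = ((n-1)/2, (n+1)/2)$ for $n$ odd and $(a, b) = (n/2 - 1, n/2 + 1)$ for $n$ even. In each case $a + b = n$ forces $b \equiv -a \pmod n$; a short computation shows $\gcd(a, n) = \gcd(b, n)$, and so $a \sim_{(n)} b$. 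The polynomial $\psi_a - \psi_b$ has degree $b$ with leading coefficient $-1$ (because $\psi_b$ is monic of degree $b > a$), and it is divisible by the monic polynomial $\prod_d P_d$, which has the same degree $b$ (namely $(n+1)/2$ or $n/2+1$). Matching leading coefficients forces $\psi_a - \psi_b = -\prod_d P_d$, so $\prod_d P_d \in I_n$.

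The main obstacle is spotting this precise pair $(a,b)$ and verifying the degree match. With $I_n = (\prod_d P_d)$ established, part (2) is a direct calculation. The natural map $\sheafO(\per{Y}{n}) = \Z[y]/\prod_d P_d \to \Z[x]/(x^n-1)$ sending $y \mapsto x + x^{-1}$ is injective, since both rings are $\Z$-flat and the induced map over $\Q$ is the inclusion of real subfields $\prod_{d\mid n} \Q(\zeta_d + \zeta_d^{-1}) \hookrightarrow \prod_{d\mid n} \Q(\zeta_d)$. Its image is the $\Z$-span of $\{\psi_a(x + x^{-1})\}_{a \geq 0} = \{x^a + x^{-a}\}_{a \geq 0}$, since the Chebyshev polynomials form a triangular $\Z$-basis of $\Z[y]$. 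Reducing via $x^{-a} = x^{n-a}$ in $\Z[x]/(x^n-1)$, and noting that when $n$ is even the element $\psi_{n/2}(y)$ collapses to $x^{n/2} + x^{-n/2} = 2x^{n/2}$, one recovers the stated spanning sets: the full $\sigma$-invariants for $n$ odd, and the index-$2$ subgroup $\{1, x + x^{-1}, \ldots, x^{n/2-1} + x^{1-n/2}, 2x^{n/2}\}$ for $n$ even.
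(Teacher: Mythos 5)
Your proof is correct and reaches the same intermediate goal as the paper---showing that the defining ideal of $\per{Y}{n}$ equals the principal ideal generated by the radical $Q(y)=\prod_{d\mid n}P_d(y)$ of $\psi_n(y)-2$---but by a genuinely different route at the key step. The paper proves $(Q)\subseteq I_n$ by base-changing along the faithfully flat map $\Z[y]\to\Z[x^{\pm1}]$, where $P_{a,b}(x+x^{-1})=x^{-a}(x^{a+b}-1)(x^{a-b}-1)$ factors transparently, and then computing the ideal generated by the two elements $P_{n+1,1}$ and $P_{n+2,2}$. You instead stay in $\Z[y]$ and exhibit a single equivalent pair, $(a,b)=((n-1)/2,(n+1)/2)$ or $(n/2-1,n/2+1)$, for which $\psi_a-\psi_b$ already has the same degree as $Q$ and leading coefficient $-1$, forcing $\psi_a-\psi_b=-Q$ on the nose. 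This is a clean observation that avoids the base change entirely; what it costs is that the argument only applies for $n\geq 3$ (your pair degenerates otherwise), whereas the paper's two generators work uniformly. Your dispatch of $n\leq 2$ via the torsion containment is slightly circular for $n=1$: that containment reads $\per{Y}{1}\subseteq\psi_1^{-1}(\per{Y}{1})=\per{Y}{1}$ and gives nothing, and the identity $I_1=(y-2)$ (on which the $n=2$ case then leans) still needs a small direct computation, e.g.\ $P_{2,1}=(y-2)(y+1)$ and $P_{3,2}=(y-2)(y^2+y-1)$ generate $(y-2)$ since $y+1$ and $y^2+y-1$ are comaximal in $\Z[y]$. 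One further notational nit in part (2): the image of $\Z[y]$ is the span of $1$ together with $\{x^a+x^{-a}\}_{a\geq 1}$, not of $\{\psi_a(x+x^{-1})\}_{a\geq 0}$, since $y^0=1$ maps to $1$ rather than to $\psi_0=2$; your final spanning sets are nevertheless the correct ones.
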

\begin{proof}
(1): Consider the differences of Chebyshev polynomials 
	$$
	P_{a,b}(y) := \psi_a(y)-\psi_b(y) \in \Z[y].
	$$ 
Then the periodic locus $\per{Y}{n}$ is $\Spec\Z[y]/I$, where 
	$$
	I= \big(P_{a,b}(y) \,\mid\, a\equiv \pm b\bmod n,\ a,b\geq 1\big).
	$$
Let $Q(y)$ denote the product of the monic irreducible factors of $(\psi_n(y)-2)$, each taken only with 
multiplicity $1$. If $n$ is odd, we have
	$$
	Q(y) = \prod_{i=0}^{\frac{n-1}{2}}(y-\zeta_n^i-\zeta_n^{-i}).
	$$
and if $n$ is even, we have
	$$
	Q(y) = \prod_{i=0}^{\frac{n}{2}}(y-\zeta_n^i-\zeta_n^{-i}).
	$$
Then we have
	$$
	I\subseteq (Q(y))
	$$
since each $P_{a,b}(y)$ vanishes at $y=\zeta_n^{i}+\zeta_n^{-i}$, for each $i$. 
Therefore to prove (1), it is enough to show this containment is an equality.

Since $\Z[x^{\pm 1}]$ is a faithfully flat extension of $\Z[y]$, it is enough to show this after base change to 
$\Z[x^{\pm 1}]$. In other words, it is enough to show
	$$
	Q(x+x^{-1}) \in \big(P_{a,b}(x+x^{-1}) \,\mid\, a\equiv\pm b\mod n\big).
	$$
We have
	$$
	P_{a,b}(x+x^{-1}) = x^a+x^{-a}-x^b-x^{-b} = x^{-a}(x^{a+b}-1)(x^{a-b}-1).
	$$
Therefore we have
\begin{align*}
	\big(P_{n+1,1}(x+x^{-1})\big) &=\big((x^n-1)(x^{n+2}-1)\big), \\
	\big(P_{n+2,2}(x+x^{-1})\big) &=\big((x^n-1)(x^{n+4}-1)\big)
\end{align*}
and hence
	$$
	\big(P_{n+1,1}(x+x^{-1}),P_{n+2,2}(x+x^{-1})\big) 
	= \big(x^n-1\big)\big((x^{2}-1),(x^{n}-1)\big).
	$$
If $n$ is odd, then we have
	$$
	\big(x^n-1\big)\big((x^2-1),(x^n-1)\big) = \big((x^n-1)(x-1)\big) = \big(Q(x+x^{-1})\big).
	$$
Similarly, if $n$ is even 
	$$
	\big(x^n-1\big)\big((x^2-1),(x^n-1)\big) = \big((x^n-1)(x^2-1)\big) = \big(Q(x+x^{-1})\big).
	$$
Thus in either case, we have 
\begin{align*}
	Q(x+x^{-1}) &\in \big(P_{n+1,1}(x+x^{-1}),P_{n+2,2}(x+x^{-1})\big) \\
	&\subseteq \big(P_{a,b}(y) \,\mid\, a\equiv \pm b\bmod n,\ a,b\geq 1\big).
\end{align*}

(2): Suppose a polynomial $f(y)\in\Z[y]$ maps to zero in $\Z[x]/(x^n-1)$.
Then we have $f(\zeta_n^i+\zeta_n^{-i})=0$ for all $i$.
Therefore we have $(y-\zeta_n^i-\zeta_n^{-i})\mid f(y)$. Thus if $n$ is odd, we have
	$$
	\prod_{i=0}^{\frac{n-1}{2}}(y-\zeta_n^i-\zeta_n^{-1})\mid f(y)
	$$
and if $n$ is even,
	$$
	\prod_{i=0}^{\frac{n}{2}}(y-\zeta_n^i-\zeta_n^{-1})\mid f(y).
	$$
In either case, we have $Q(y)\mid f(y)$. Therefore the map $\Z[y]/(Q(y))\to \Z[x]/(x^n-1)$ is injective.

Let us now consider surjectivity. When $n$ is odd, the set 
	$$
	\{1,x+x^{-1},\dots,x^{\frac{n-1}{2}}+x^{-\frac{n-1}{2}}\}
	$$
is a $\Z$-basis
for the subring $\big(\Z[x]/(x^n-1)\big)^\sigma$ of $\sigma$-invariants, and this is contained in the
image of $\Z[y]$. When $n$ is even, $\{1,x+x^{-1},\dots,x^{n/2-1}+x^{1-n/2},x^{n/2}\}$ is a $\Z$-basis. All
these elements but $x^{n/2}$ lie in the image of $\Z[y]$. However $2x^{n/2}$ does lie in the image. 
\end{proof}

\begin{corollary}
\label{cor:chebyshev-periodic-ray-class-algebra}
	The map 
		$$
		\sheafO(\per{Y}{n}) \longmap \rcl{\Z,\Pr}{n}
		$$ 
	to the ray class algebra given by the point $\zeta_n+\zeta_n^{-1}\in\per{Y}{n}(\maxO_{\bar{\Q}})$ is 
	injective. 
	If $n$ is odd, it is surjective; if $n$ is even, its cokernel is a group of order $2$ generated by
	the class of 
	$[-1]=\langle \dots,(-1)^{d},\dots \rangle\in \prod_{d\mid n}\Q(\zeta_{n/d}+\zeta_{n/d}^{-1})$. 
\end{corollary}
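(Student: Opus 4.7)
The plan is to factor the map in question through the change-of-conductor inclusion
\[
\sheafO(\per{Y}{n}) \longmap \rcl{\Z,\Pr}{n} \stackrel{u}{\longmap} \rcl{\Z,\Pr}{n\infty} \cong \Z[x]/(x^n-1),
\]
where $u$ is from the \emph{Change of $\gf$} subsection and the last identification is the Teichm\"uller isomorphism of Example~\ref{subsec:cyclotomic-ray-class-algebra} sending $x$ to $[\zeta_n]$. First I would verify that the total composition agrees with the injective map of Proposition~\ref{pro:chebyshev-periodic-locus2}(2). Using the universal property~\ref{subsec:periodic-witt-univ-prop} applied to the evaluation $\sheafO(\per{Y}{n})\to\maxO_{\bar{\Q}}$, $y\mapsto\zeta_n+\zeta_n^{-1}$, the composition is forced to send $y$ to the Witt vector whose $a$th ghost component is $\psi_a(y)$ evaluated at $\zeta_n+\zeta_n^{-1}$, namely $\zeta_n^a+\zeta_n^{-a}$. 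This is the ghost vector of $[\zeta_n]+[\zeta_n^{-1}]=x+x^{-1}$, so the composition equals the map of Proposition~\ref{pro:chebyshev-periodic-locus2}(2) and injectivity of the corollary's map is immediate.

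Next I would identify the image of $u$ inside $\Z[x]/(x^n-1)$ with the $\sigma$-invariant subring, where $\sigma\:x\mapsto x^{-1}$. The cleanest route is the periodic Witt vector description from the interlude section, which realises $\rcl{\Z,\Pr}{n}=\peru{W}{n}_{\Z,\Pr}(\maxO_{\bar{\Q}})$ sitting inside $\rcl{\Z,\Pr}{n\infty}=\peru{W}{n\infty}_{\Z,\Pr}(\maxO_{\bar{\Q}})$ as those Witt vectors whose ghost components $x_a$ satisfy $x_a=x_b$ whenever $(a)\sim_n(b)$. For $K=\Q$ and $\gf=(n)$ the condition $(a)\sim_n(b)$ reduces by Proposition~\ref{pro:DR-original-def} to $a\equiv\pm b\pmod n$. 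Since the $a$th ghost component of $f(x)\in\Z[x]/(x^n-1)$ is $f(\zeta_n^a)$, the involution $\sigma$ acts on ghost vectors by the index swap $a\leftrightarrow -a\bmod n$; therefore $\sigma$-invariance is exactly the extra symmetry demanded by $n$-periodicity, and $u$ identifies $\rcl{\Z,\Pr}{n}$ with $(\Z[x]/(x^n-1))^\sigma$.

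Combining these two reductions with Proposition~\ref{pro:chebyshev-periodic-locus2}(2) yields the surjectivity for odd $n$ and pins down the cokernel for even $n$: in the odd case the image of $\sheafO(\per{Y}{n})$ is already the full $\sigma$-invariant subring, while in the even case it is the codimension-one $\Z$-sublattice obtained by replacing $x^{n/2}$ with $2x^{n/2}$, so the cokernel is cyclic of order $2$ generated by the class of $x^{n/2}$. It remains to match this class with the element in the statement. Under the generic CRT decomposition $\Q\otimes_\Z \rcl{\Z,\Pr}{n}=\prod_{d\mid n}\Q(\zeta_{n/d}+\zeta_{n/d}^{-1})$, the element $x$ maps to $\zeta_{n/d}$ in the $d$th factor, so $x^{n/2}$ maps to $\zeta_{n/d}^{n/2}$, which equals $1$ or $-1$ according as $(n/d)\mid (n/2)$ or not, i.e.\ according as $d$ is even or odd. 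Hence $x^{n/2}=\langle\ldots,(-1)^d,\ldots\rangle$ as required.

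I do not anticipate a serious obstacle; the main load-bearing step is the ghost-vector comparison in the second paragraph, which rests only on the explicit description of $\sim_n$-equivalence for principal ideals of $\Z$ together with the Teichm\"uller ghost formula.
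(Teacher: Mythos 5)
Your argument is correct, and its overall skeleton matches the paper's: both reduce the corollary to proposition~\ref{pro:chebyshev-periodic-locus2}(2) by identifying $\rcl{\Z,\Pr}{n}$ with the $\sigma$-invariant subring of $\Z[x]/(x^n-1)\cong\rcl{\Z,\Pr}{n\infty}$, and both then read off the generator $x^{n/2}\mapsto[-1]$ of the order-$2$ cokernel for even $n$. Where you genuinely diverge is in how that identification is established. The paper takes $\sigma$-invariants of the isomorphism $\Z[x]/(x^n-1)\longrightisomap\rcl{\Z,\Pr}{n\infty}$ and invokes proposition~\ref{pro:maximality-facts}(2) (invariants of a $\Lambda$-normal ring under a group of $\Lambda$-automorphisms remain $\Lambda$-normal) to conclude that $\big(\Z[x]/(x^n-1)\big)^\sigma$ is the maximal $\Lambda$-order in $\prod_{d\mid n}\Q(\zeta_d+\zeta_d^{-1})$, i.e.\ is $\rcl{\Z,\Pr}{n}$. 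You instead work inside the periodic Witt vectors of the interlude: $\rcl{\Z,\Pr}{n}=\peru{W}{n}_{\Z,\Pr}(\maxO_{\bar{\Q}})$ sits inside $\peru{W}{n\infty}_{\Z,\Pr}(\maxO_{\bar{\Q}})=\Z[x]/(x^n-1)$ as the vectors whose ghost components are constant on $\sim_{(n)}$-classes, and since $(a)\sim_{(n)}(b)$ means $a\equiv\pm b\bmod n$ while $\sigma$ permutes ghost indices by $a\mapsto -a$, this subring is exactly the $\sigma$-invariants. Your route trades the abstract maximality lemma for an explicit ghost-component computation; it has the merit of simultaneously handling the integral normality and the generic-fibre identification in one step, and of making the compatibility with the evaluation map $y\mapsto\zeta_n+\zeta_n^{-1}$ (hence the comparison with the map of proposition~\ref{pro:chebyshev-periodic-locus2}(2)) completely transparent via the universal property~\ref{subsec:periodic-witt-univ-prop}. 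The paper's route is shorter and does not depend on the interlude, which the authors flag as logically independent of the rest of the paper. Both arguments rest on the same external input, namely the theorem of the earlier paper identifying $\Z[x]/(x^n-1)$ with the ray class algebra of conductor $n\infty$.
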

\begin{proof}
By the map $\Z[x]/(x^n-1)\to \maxO_{\bar{\Q}}$ given by $x\mapsto \zeta_n$ results in a diagram
	$$
	\xymatrix{
		\Z[x]/(x^n-1)\ar[r] 
			& \rcl{\Z,\Pr}{n\infty} \\
		\big(\Z[x]/(x^n-1)\big)^{\sigma}\ar[r]\ar@{>->}[u] 
			& \rcl{\Z,\Pr}{n}. \ar@{>->}[u]
	}
	$$
By the second theorem in our first paper~\cite{Borger-deSmit:Integral-models}, the top map is an isomorphism.
Since taking maximal $\Lambda$-orders commutes with taking group invariants,
by part (2) of proposition~\ref{pro:maximality-facts}, the bottom arrow is also an
isomorphism. Observe that when $n$ is even, the image of $x^{n/2}$ is $[-1]$. Now invoke
proposition~\ref{pro:chebyshev-periodic-locus2}. 
\end{proof}

\section{$K$ imaginary quadratic: CM elliptic curves and the Latt\`es scheme}
Let $K$ be an imaginary quadratic field. For convenience, let us fix an embedding $K\subset\C$.
In this section, we show how explicit class field theory over $K$, due to
Kronecker and his followers, can be set naturally in the framework of this paper. This builds on
Gurney's thesis~\cite{Gurney:thesis}. The arguments are similar in spirit to those in the real-cyclotomic
context in the previous section.

Let us write 
$$
\Lambda=\Lambda_{\maxO_K,\mx_K} \quad\text{and}\quad \Lambda_\gp=\Lambda_{\maxO_{K_\gp},\gp},
$$ 
where $\gp\in\Pr$ is any given prime.

\subsection{} \emph{The moduli space of CM elliptic curves.}
Let $R$ be an $\maxO_K$-algebra. Then a \emph{CM elliptic curve} over $R$ is an elliptic curve $E$ over $R$
together with a ring map $\maxO_K\to\End_R(E)$ such that the induced action of $\maxO_K$ on the tangent space
$T_0(E)$ agrees with the $\maxO_K$-algebra structure map $\maxO_K\to R=\End_R(T_0(E))$. Let $\MCM(R)$ denote
the category whose objects are the CM elliptic curves over $R$ and whose morphisms are the
$\maxO_K$-equivariant isomorphisms of elliptic curves. Since all morphisms are isomorphisms, $\MCM(R)$ is a
groupoid by definition. As the $\maxO_K$-algebra $R$ varies, the usual base-change maps make $\MCM$ a fibered
category over the category of affine $\maxO_K$-schemes. Further, this fibered category satisfies effective
descent for the fppf topology because all ingredients in its definition can be expressed in terms which are
fppf-local. In other words, $\MCM$ is a stack. (See~\cite{Laumon-MB:Champs-algebriques} the theory of stacks.)
Let $\sE\to\MCM$ denote the universal object. Then $\sE(R)$ is
the groupoid of pairs $(E,x)$, where $E$ is a CM elliptic curve over $R$, and $x$ is a point of $E(R)$.

Both $\sE$ and $\MCM$ have certain endomorphisms $\psi_\ga$, for any integral ideal $\ga\subseteq \maxO_K$,
defined as follows. 
On $\MCM$, the map $\psi_\ga$ is defined by
	$$
	\psi_\ga: E\mapsto \ga^{-1}\otimes E,
	$$
where the elliptic curve $\ga^{-1}\otimes E$ is the Serre tensor product~\cite{Serre:complex-mult},
defined as a functor by 
	$$
	\big(\ga^{-1}\otimes E\big)(C):=\ga^{-1}\otimes_{\maxO_K}\big(E(C)\big),
	$$
for any algebra $C$ over the base $R$ of $E$. (This is again an elliptic curve: since $\ga^{-1}$ is a direct
summand of $\maxO_K^2$, the functor $\ga^{-1}\otimes E$ is a direct summand of the abelian variety $E^2$.) In particular, if $R=\C$ and the period lattice of $E$ is $L$, so
that $E(\C)=\C/L$, then $\ga^{-1}\otimes E$ is 
the elliptic curve with period lattice $\ga^{-1}\otimes_{\maxO_K}L$, up to canonical isomorphism.

Using the isogeny $E\to \ga^{-1}\otimes E$ defined by $x\mapsto 1\otimes x$, we can then define the 
endomorphism $\psi_\ga:\sE\to\sE$ by
	$$
	\psi_\ga: (E,x) \mapsto (\ga^{-1}\otimes E,1\otimes x).
	$$
Observe that it lies over the endomorphism $\psi_\ga$ of $\MCM$.

The $\psi_\ga$ operators should be thought of as providing a $\Lambda$-structure on $\MCM$ and $\sE$, but
$\MCM$ and $\sE$ are stacks and we will not define what $\Lambda$-structures on stacks are here. Let us just
say that the $\psi_\ga$ operators, whether on $\MCM$ or $\sE$, commute up to a coherent family of canonical
isomorphisms coming from the usual associators $(U\otimes V)\otimes W\to U\otimes (V\otimes W)$ on the category
of $\maxO_K$-modules. Similarly, if $\ga$ is a prime $\gp$, then the reduction of $\psi_\gp$ modulo $\gp$
agrees with the $N(\gp)$-power Frobenius map $F_\gp$, in the sense that for any elliptic curve $E$ over
an $\F_p$-algebra, there is unique isomorphism $\psi_\gp(E)\to F_\gp(E)$ compatible with the canonical maps
from $E$.

\subsection{} \emph{Latt\`es scheme.}
\label{subsec:lattes}
We follow Gurney's thesis~\cite{Gurney:thesis}. Let $\sL$ denote the coarse space underlying $\sE$. It is 
the functor defined, for any $\maxO_K$-algebra $C$, by 
	$$
	\sL(C) = \{\text{local-isomorphism classes of triples $(C',E,x)$}\},
	$$
where $C'$ is an fppf cover of $C$, and $E$ is a CM elliptic curve over $C'$, and $x\in E(C')$ and
where two pairs $(C'_1,E_1,x_1)$ and $(C'_2,E_2,x_2)$ are in the same local-isomorphism class
if $C'_1$ and $C'_2$ have a common 
fppf cover $C''$ such that when pulled back to $C''$, there is an isomorphism between $E_1$ and $E_2$ 
identifying $x_1$ and $x_2$. Observe that if $C=\C$ (or any algebraically closed field), we have
	$$
	\sL(\C) = \{\text{isomorphism classes of pairs $(E,x)$}\},
	$$
where $E$ is a CM elliptic curve over $\C$ and $x\in E(\C)$.
Thus $\sL(\C)$ is the union of $h_K$ copies of $\P^1(\C)$, where $h_K$ is the class number of $K$.

The functor $\sL$ is an $\maxO_K$-scheme (see 4.3.10 of~\cite{Gurney:thesis}), which we
call the \emph{Latt\`es scheme}, and unless we say otherwise we will view it as an $\maxO_K$-scheme.
Note however that the structure map $\sL\to\Spec\maxO_K$
factors naturally through the coarse space underlying $\MCM$, which Gurney shows
(2.6.10 of~\cite{Gurney:thesis}) is $\Spec \maxO_H$, where $H$ is the Hilbert class field
of $K$ in $\C$:
	$$
	\xymatrix{
	\sL \ar@{-->}[rr]\ar[dr] & & \Spec \maxO_H \ar[dl] \\
	& \Spec \maxO_K
	}
	$$
So one can also view $\sL$ as an $\maxO_H$-scheme, and sometimes we will. 

For example, given any CM elliptic curve $E$ over $R$, we have an identification
\begin{equation}
	\label{eq:lattes-quotient}
	\sL \times_{\Spec(\maxO_H)}\Spec(R) = E/\maxO_K^*.
\end{equation}
(This can in fact serve as an alternative definition of $\sL$.) 
The universal map $\sE\to\sL$ is then nothing more than
the classical Weber function expressed in our language. 
Gurney (4.3.16 of~\cite{Gurney:thesis}) proves there exists an isomorphism $\sL\to \bP^1_{\maxO_H}$
of $\maxO_H$-schemes.
(The fact that $\sL$ is a $\bP^1$-bundle
over $\maxO_H$ is expected, but the fact that it is the trivial bundle requires more work.) It
follows that as an $\maxO_K$-scheme, $\sL$ has geometrically disconnected fibers if $h_K>1$.
We also emphasize that there appears to be no canonical isomorphism $\sL\to \bP^1_{\maxO_H}$.
There is a canonical $\maxO_H$-point $\infty\in\sL(\maxO_H)$, which corresponds to pairs of
the form $(E,0)$. The isomorphism $\sL\to\bP^1_{\maxO_H}$ would then typically be chosen to send $\infty$ to
$\infty$, but we cannot make any further restrictions, it seems. So the isomorphism $\sL\to\bP^1_{\maxO_H}$ is
canonically defined only up to the action of the stabilizer group of $\infty$, the semi-direct product
$\maxO_H\rtimes\maxO_H^*$.

The endomorphisms $\psi_\ga$ of $\sE$ induce endomorphisms $\psi_\ga:\sL\to\sL$ (morphisms of 
$\maxO_K$-schemes). Then
$\psi_\gp$ for $\gp$ prime reduces to the $N(\gp)$-power Frobenius map $F_\gp$ modulo $\gp$, and hence the
$\psi_\ga$ define a $\Lambda $-structure on $\sL$. (See 4.3.11 of~\cite{Gurney:thesis}.) Further the map
$\sL\to\Spec\maxO_H$ above is a morphism of $\Lambda$-schemes, where $\maxO_H$ is given its unique
$\Lambda$-ring structure.

When expressed in terms of $\bP^1_{\maxO_H}$, the endomorphisms $\psi_\ga$ are often call the
Latt\`es functions, as for example in Milnor's book~\cite{Milnor:dynamics-book} (page 72),
which explains the name.

\begin{proposition}
	\label{pro:lattes-torsor}
	\begin{enumerate}
		\item $\sL[\gf](\C)$ is the set of isomorphism classes of pairs $(E,x)$, where $E$ is a complex
			CM elliptic curve and $x\in E[\gf](\C)$.
		\item The action of $\Id_{\Pr}$ on $\sL[\gf](\C)$, where $\ga$ acts as $\psi_\ga$,
			factors through an action (necessarily unique)
			of $\DR_{\Pr}(\gf)$ on $\sL[\gf](\C)$. The resulting $\DR_{\Pr}(\gf)$-set is a torsor
			generated by any class of the form
			$(E_0,x_0)$, where $E_0$ is a CM elliptic curve and $x_0$ is a generator of $E_0[\gf](\C)$ as an 
			$\maxO_K$-module.
	\end{enumerate}
\end{proposition}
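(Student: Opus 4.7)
The plan is to pass everywhere to the classical analytic description of CM elliptic curves. Over $\C$, every CM elliptic curve has the form $\C/\mathfrak{L}$ for some fractional ideal $\mathfrak{L}$ of $\maxO_K$, and one checks directly that $\sL(\C)$ identifies with the set of pairs $(\mathfrak{L}, z)$, where $\mathfrak{L}$ is a fractional ideal and $z\in\C/\mathfrak{L}$, modulo the relation $(\mathfrak{L}, z) \sim (c\mathfrak{L}, cz)$ for $c\in K^*$. Under this identification $\psi_\ga$ becomes simply $(\mathfrak{L}, z) \mapsto (\ga^{-1}\mathfrak{L}, z)$, the isogeny $E \to \ga^{-1}\otimes E$ becomes the canonical projection $\C/\mathfrak{L}\to\C/\ga^{-1}\mathfrak{L}$, and $E[\gf]$ becomes $\gf^{-1}\mathfrak{L}/\mathfrak{L}$.

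For (1), since $K$ is imaginary quadratic $\gf_\infty = (1)$, so $\sL[\gf] = \psi_\gf^{-1}(\sL(1))$. I first compute $\sL(1)(\C)$: the condition $\psi_{\maxO_K}(\mathfrak{L}, z)\sim\psi_{(c_0)}(\mathfrak{L}, z)$ for every $c_0\in K^*$ unwinds (after scaling by $c_0$) to requiring a unit $u\in\maxO_K^*$ with $(u - c_0)z \in \mathfrak{L}$; letting $|c_0|$ grow forces $z = 0$, so $\sL(1)(\C)$ is exactly the identity section $\{(\mathfrak{L}, 0)\}$. Pulling back through $\psi_\gf$, the condition $(\gf^{-1}\mathfrak{L}, z) \in \sL(1)(\C)$ becomes $z \in \gf^{-1}\mathfrak{L}/\mathfrak{L} = E[\gf]$, giving (1). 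For the factoring part of (2), suppose $\ga \sim_\gf \gb$ with $\gd = \gcd(\ga, \gf) = \gcd(\gb,\gf)$ and $(\ga\gd^{-1})(\gb\gd^{-1})^{-1} = (c)$ with $c \equiv 1 \pmod{\gf\gd^{-1}}$ (no archimedean condition since $K$ has no real places). Then for $(\mathfrak{L}, z) \in \sL[\gf](\C)$ with $z \in \gf^{-1}\mathfrak{L}$, the shift $(c-1)z$ lies in $\gf\gd^{-1}\cdot\gf^{-1}\mathfrak{L} = \gd^{-1}\mathfrak{L} \subseteq \gb^{-1}\mathfrak{L}$, so multiplication by $c$ realizes the required identification between $\psi_\ga(\mathfrak{L}, z)$ and $\psi_\gb(\mathfrak{L}, z)$ in $\sL(\C)$. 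This establishes factorization of the action through $\DR_\Pr(\gf)$.

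For the torsor claim, I use the decomposition $\DR_\Pr(\gf) = \bigsqcup_{\gd\mid\gf}[\gd]\Cl(\gf\gd^{-1})$ from (\ref{DR-decomp}) and a parallel stratification of $\sL[\gf](\C)$ by the divisor $\gd\mid\gf$ such that $\mathrm{Ann}_{\maxO_K}(x) = \gf\gd^{-1}$. For fixed $E$, the $x$ in this stratum are the generators of the unique submodule $\gd\cdot E[\gf] \cong \maxO_K/\gf\gd^{-1}$; modulo $\Aut(E) = \maxO_K^*$ their count is $|(\maxO_K/\gf\gd^{-1})^*|$ divided by the order of the image of $\maxO_K^*$ in $(\maxO_K/\gf\gd^{-1})^*$, and summing over the $h_K$ isomorphism classes of $E$ yields exactly $|\Cl(\gf\gd^{-1})|$ by the analytic class number formula, matching the $\gd$-stratum of $\DR_\Pr(\gf)$. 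For transitivity from the basepoint $(E_0, x_0) = (\mathfrak{L}_0, y_0)$, I use Chebotarev density of $\Pr = \mx_K$ to realize any prescribed class in $\Cl(\gf\gd^{-1})$ by an ideal $\ga'\in\Id_\Pr$ coprime to $\gf\gd^{-1}$; a direct lattice computation shows $\psi_{\gd\ga'}(E_0, x_0) = (\gd^{-1}\ga'^{-1}\mathfrak{L}_0, y_0)$, and the image of $y_0$ is annihilated by exactly $\gf\gd^{-1}$ (using the coprimality of $\ga'$ with $\gf\gd^{-1}$ together with $x_0$ being a generator of $E_0[\gf]$), so it lands in the $\gd$-stratum. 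The main obstacle is verifying that varying $[\ga']$ across $\Cl(\gf\gd^{-1})$ realizes every $\maxO_K^*$-orbit of generators in that stratum; once combined with the cardinality match, surjectivity of the orbit map automatically upgrades to bijectivity, establishing freeness and hence the torsor property.
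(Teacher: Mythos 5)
Your lattice reduction, your computation of $\sL(1)(\C)$ and hence part (1), and the overall shape of the factorization through $\DR_{\Pr}(\gf)$ are fine (the paper runs the same lattice computations). The genuine gap is in the torsor claim. Your strategy is: (i) match cardinalities stratum by stratum via $\DR_{\Pr}(\gf)=\coprod_{\gd\mid\gf}[\gd]\Cl(\gf\gd^{-1})$, and (ii) prove the orbit map $[\ga]\mapsto\psi_\ga(E_0,x_0)$ is surjective, so that (i) upgrades it to a bijection. But you explicitly leave (ii) unproved---you call it ``the main obstacle''---and it is the whole content of the statement: showing that $\psi_{\gd\ga'}(E_0,x_0)$ lands in the $\gd$-stratum is easy, whereas showing that varying $[\ga']$ over $\Cl(\gf\gd^{-1})$ hits \emph{every} $\maxO_K^*$-orbit of generators of $\gd\cdot E[\gf]$ on \emph{every} curve $E$ in a given $\Cl(1)$-class is exactly where the ray class group (rather than the class group) must be seen to act. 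Without that step the cardinality count, which is otherwise correct, proves nothing. The paper's route avoids the count entirely: for transitivity it takes an arbitrary $(E,x)$, chooses $\ga$ coprime to $\gf$ with $\ga^{-1}\otimes E_0\cong E$, notes that the image of $x_0$ then generates $E[\gf](\C)$, and scales by an element $b\in\maxO_K$ so that $\psi_{b\ga}(E_0,x_0)=(E,x)$; and for freeness it proves the exact equivalence $\ga\sim_\gf\gb\Leftrightarrow\psi_\ga(E_0,x_0)=\psi_\gb(E_0,x_0)$ in both directions, the converse (which you never address) coming from a commuting square on $\gf^{-1}\gc/\gc$ that forces $t-1\in\gf\gb^{-1}$, whence $\ga\sim_\gf\gb$ by Proposition~\ref{pro:DR-original-def}. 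You need either to complete your surjectivity step along these lines or to prove injectivity of the orbit map directly; as written the torsor property is not established.

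A smaller but real error occurs in the factorization step, where you assert $c-1\in\gf\gd^{-1}$. The multiplicative congruence defining $\Cl(\gf\gd^{-1})$ only controls $\ord_\gp(c-1)$ at primes $\gp$ dividing $\gf\gd^{-1}$, and $c$ need not be integral elsewhere: take $\gf=\gp=(\pi)$ principal, $\ga=\gp$, $\gb=\gp^2$, so $\gd=\gp$, $\gf\gd^{-1}=(1)$, and $c$ is a unit times $\pi^{-1}$, giving $\ord_\gp(c-1)=-1$, so $c-1\notin\maxO_K$. The correct containment, which is precisely the content of Proposition~\ref{pro:DR-original-def}, is $c-1\in\gf\gb^{-1}$, and that already yields $(c-1)z\in\gf\gb^{-1}\cdot\gf^{-1}\mathfrak{L}=\gb^{-1}\mathfrak{L}$, which is all you need; so this step is repaired by invoking that proposition instead of the raw ray-class definition.
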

\begin{proof}
	(1): Any point of $y\in\sL(\C)$ is the isomorphism class of a pair $(E,x)$,
	where $E$ is a complex CM elliptic curve and $x\in E(\C)$. 
	Under $\psi_\gf$, the pair $(E,x)$ maps to $(\gf^{-1}\otimes E, 1\otimes x)$.
	So if $y$ lies in the $\gf$-torsion locus $\sL[\gf](\C)$, the object 
	$(\gf^{-1}\otimes E, 1\otimes x)$ must be isomorphic to one of the form $(E',0)$. 
	Therefore the point $1\otimes x\in (\gf^{-1}\otimes E)(\C)$ is $0$, 
	and hence $x$ is an $\gf$-torsion point of $E$. 

	(2): Let us first show that the orbit of $(E_0,x_0)$ under $\Id_{\Pr}$ is all of 
	$\sL[\gf](\C)$. Let $(E,x)$ be an element of
	$\sL[\gf](\C)$. Then there is an integral ideal $\ga\subseteq\maxO_K$ such that there exists an isomorphism 
	of elliptic curves 
	$f\:\ga^{-1}\otimes E_0 \to E$. We can choose $\ga$ such that it is coprime to $\gf$.
	It follows that the image $1\otimes x_0 \in \ga^{-1}\otimes E_0(\C)$ of $x_0$ is a generator
	of $(\ga^{-1}\otimes E_0)[\gf](\C)$, and hence
	that $f(1\otimes x_0)$ is a generator of $E[\gf](\C)$. In other words, there exists an element 
	$b\in \maxO_K$ such that $bf(1\otimes x_0) = x$. Then we have
		\begin{align*}
		\psi_{b\ga}(E_0,x_0) 
			&= (b^{-1}\ga^{-1}\otimes E_0, 1\otimes x_0) \cong (\ga^{-1}\otimes E_0, b\otimes x_0)\\
			&\cong (E, f(b\otimes x_0)) = (E,b f(1\otimes x_0)) = (E,x).
		\end{align*}
	Therefore $(E_0,x_0)$ generates $\sL[\gf](\C)$ as a $\DR_{\Pr}(\gf)$-set.

	It remains to show 
	$$
	\ga\sim_\gf\gb \Longleftrightarrow  \psi_\ga((E_0,x_0))=\psi_\gb((E_0,x_0)),
	$$
	for any two ideals $\ga,\gb\in\Id_{\Pr}$. 
	
	First consider the direction $\Rightarrow$. The equality $\psi_\ga((E_0,x_0))=\psi_\gb((E_0,x_0))$
	is equivalent to the existence of an isomorphism 
	$(\ga^{-1}\otimes E_0, 1\otimes x_0)\cong(\gb^{-1}\otimes E_0, 1\otimes x_0)$.
	Since $\ga\sim_\gf\gb$, by~\ref{pro:DR-original-def}, there exists
	an element $t\in K$ such that $\ga=\gb t$ and $t-1\in \gf\gb^{-1}$.
	So it is enough to show that the isomorphism
		\begin{equation}
			\label{eq:583}
		\ga^{-1}\otimes E_0 \longlabelmap{t\otimes 1} \gb^{-1}\otimes E_0
		\end{equation}
	given by multiplication by $t\otimes 1$, sends $1\otimes x_0$ on the
	left to $1\otimes x_0$ on the right. In other words, it is enough to show that the two elements 
	$1\otimes x_0, t\otimes x_0\in \gb^{-1}\otimes E_0(\C)$	agree. 	
	
	We give an argument using period lattices.
	Write $E_0(\C)=\C/\gc$, where $\gc$ is a fractional ideal of $K$, and let $y\in\C$ be a coset representative
	of $x_0\in\C/\gc$. Then the morphism (\ref{eq:583}) above is identified with
		$$
		\C/\ga^{-1}\gc \longlabelmap{t} \C/\gb^{-1}\gc.
		$$
	So what we want to show is equivalent to the congruence
		$$
		ty \equiv y \mod \gb^{-1}\gc.
		$$
	But since $x_0$ is an $\gf$-torsion element of $\C/\gc$, we have $y\in\gf^{-1}\gc$, and hence we have
		$$
		(t-1)y \in \gf\gb^{-1}\cdot \gf^{-1}\gc = \gb^{-1}\gc,
		$$
	as desired.

	Now consider the direction $\Leftarrow$. So assume $\psi_\ga(E_0,x_0)\cong \psi_\gb(E_0,x_0)$.
	This means there is an isomorphism 
			$$
			\C/\ga^{-1}\gc \longlabelmap{t} \C/\gb^{-1}\gc,
			$$
	given by multiplication by some element $t\in K^*$ with $(t)=\ga\gb^{-1}$, such that 
	$tx_0=x_0$.
	Therefore the diagram
	$$
	\xymatrix{
		\gf^{-1}\gc/\gc \ar^{\id}[r]\ar[d] & \gf^{-1}\gc/\gc \ar[d] \\
		\C/\ga^{-1}\gc \ar^{t}[r] & \C/\gb^{-1}\gc
	}
	$$
	commutes, since the two ways around the diagram agree on $x_0$, which is a generator of $\gf^{-1}\gc/\gc$.
	Therefore the difference map $t-1\:\gf^{-1}\gc/\gc\to \C/\gb^{-1}\gc$ is zero. This implies
	$(t-1)\gf^{-1}\gc\subseteq \gb^{-1}\gc$ and hence $t-1\in \gf\gb^{-1}$. It follows
	from~\ref{pro:DR-original-def} that $\ga\sim_\gf \gb$.
\end{proof}

\begin{proposition}
	\label{pro:lattes-periodic-torsion}
	Let $\gf$ be an integral ideal in $\maxO_K$. Then we have equalities
		$$
		\perfl{\sL}{\gf} = \perred{\sL}{\gf} =\sL[\gf]_{\red}
		$$
	of closed subschemes of $\sL$.	
\end{proposition}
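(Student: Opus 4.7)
The plan is to establish the cycle of inclusions
\[
\perfl{\sL}{\gf} \subseteq \perred{\sL}{\gf} \subseteq \sL[\gf]_{\red} \subseteq \perfl{\sL}{\gf}
\]
inside $\sL$, which forces all three schemes to coincide. The first inclusion follows from theorem~\ref{thm:periodic-locus3}(1) applied to $\sL\cong \bP^1_{\maxO_H}$ (which is of finite type over $\maxO_K$): the flat part $\perfl{\sL}{\gf}$ is already reduced, hence contained in $\perred{\sL}{\gf}$. The second inclusion is obtained by taking reductions in the containment $\per{\sL}{\gf}\subseteq \sL[\gf]$ supplied by~(\ref{eq:periodic-in-torsion}). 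The work is in the third inclusion.

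To prove $\sL[\gf]_{\red}\subseteq \perfl{\sL}{\gf}$, I would verify two properties of $\sL[\gf]_{\red}$: (a) it is $\gf$-periodic, hence sits inside $\per{\sL}{\gf}$; and (b) it is $\maxO_K$-flat, hence sits inside the maximal flat closed subscheme $\perfl{\sL}{\gf}$ of $\per{\sL}{\gf}$. Granting (b), the periodicity (a) can be tested on the generic fiber alone: for $\ga\sim_{\gf}\gb$, the equalizer of $\psi_\ga$ and $\psi_\gb$ inside $\sL[\gf]_{\red}$ is a closed subscheme that, by flatness together with the separatedness of $\sL$, equals all of $\sL[\gf]_{\red}$ as soon as it contains the generic fiber. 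Agreement over $\Kb$ is precisely the period-lattice calculation in the proof of proposition~\ref{pro:lattes-torsor}(2): given $\ga = t\gb$ with $t-1\in\gf\gb^{-1}$, the identity $(t-1)y\in \gf\gb^{-1}\cdot\gf^{-1}\gc = \gb^{-1}\gc$ holds for any period-lattice lift $y\in\gf^{-1}\gc$ of an arbitrary $\gf$-torsion point, not just of a generator of $E[\gf]$ as an $\maxO_K$-module.

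The main obstacle is establishing the flatness~(b). First I would show that $\sL[\gf]$ is finite over $\maxO_K$: since $\psi_\gf\colon\sL\to\sL$ is a finite morphism and $\sL[\gf]=\psi_\gf^{-1}(\per{\sL}{1})$, it suffices to show $\per{\sL}{1}$ is quasi-finite, which holds because on each geometric fiber at least one pair of distinct representatives of the ideal class group yields an equalizer that is zero-dimensional on $\bP^1$. Hence $\sL[\gf]_{\red}$ is a reduced finite scheme over the Dedekind ring $\maxO_K$, and flatness amounts to ruling out irreducible components supported in a single closed fiber. For this I would show that every geometric point of $\sL[\gf]_{\red}$ over a closed prime $\gp$ of $\maxO_K$ lifts to a characteristic-zero $\gf$-torsion point, thereby placing it in the closure of the generic fiber. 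For $\gp\nmid\gf$ this is automatic, since $\psi_\gf$ is \'etale in residue characteristic $\gp$. For $\gp\mid\gf$, one invokes the lifting theory of CM elliptic curves (Deuring and Serre--Tate) together with the connected--\'etale sequence for finite flat group schemes: in the ordinary case $E[\gf]_{\red}$ is identified with an \'etale subgroup whose geometric points lift uniquely to characteristic zero, while in the supersingular case $E[\gf]_{\red}$ degenerates to the origin, which lifts trivially. This lifting descends from $\sE$ to the quotient $\sL = \sE/\maxO_K^*$, and the lifting step at the primes dividing $\gf$ is the genuinely delicate point of the argument.
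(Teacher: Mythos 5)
Your cycle of inclusions is logically sound and, for two of its three legs, coincides with what the paper does: the reducedness of $\perfl{\sL}{\gf}$ comes from proposition~\ref{pro:periodic-structure2}, and the decisive input for the third leg is exactly proposition~\ref{pro:lattes-torsor} --- every complex $\gf$-torsion point is $\gf$-periodic --- combined with a spreading-out from the generic fibre using flatness. The paper organizes this slightly differently (it reduces to showing that the closed immersion $\per{\sL}{\gf}_{\red}\hookrightarrow\sL[\gf]_{\red}$ is surjective on $\C$-points), but that is the same argument as your step (a).

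The genuine divergence is in your step (b), the flatness of $\sL[\gf]_{\red}$ over $\maxO_K$. The paper gets this for free: it cites Gurney (4.3.11 of~\cite{Gurney:thesis}) for the fact that $\sL[\gf]$ is finite flat of degree $N(\gf)$ over $\maxO_H$, whence $\sheafO(\sL[\gf])$ is torsion-free over $\maxO_K$ and so is its reduction. You instead propose to reprove a weaker form of this by lifting closed-fibre points to characteristic zero via Deuring/Serre--Tate. That route can be made to work, but as sketched it has two soft spots you should be aware of. First, it presupposes that every geometric point of $\sL[\gf]$ in a closed fibre is the image of a point of $E[\gf]$ for some CM curve $E$ in that fibre; this requires analyzing $\per{\sL}{1}$ over $\bar{k(\gp)}$ (where the period-lattice argument of~\ref{pro:lattes-torsor} is unavailable) and quotienting by the unit action --- it is true, but not automatic. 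Second, once a point is realized in $E[\gf](\bar{k(\gp)})$, the connected--\'etale and ordinary/supersingular case analysis is unnecessary: $E[\gf]$ is finite flat over any Deuring lift of $E$, so every maximal ideal of $\sheafO(E[\gf])$ lies over a characteristic-zero minimal prime and the point specializes from the generic fibre regardless of the reduction type. In short: your argument buys independence from Gurney's finite-flatness theorem at the cost of redoing a nontrivial piece of it; the paper's proof is shorter precisely because it outsources that step.
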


\begin{proof}
	We have	the following diagram of containments of closed subschemes of $\sL$:
		$$
		\xymatrix{
		\sL(\gf)_{\red} \ar@{>->}^{\alpha}[r]\ar@{>->}^{\beta}[d] &
		\sL(\gf)_{\fl} \ar@{>->}[r]\ar@{>->}[d] & \sL(\gf) \ar@{>->}[d]\\
		\sL[\gf]_{\red} \ar@{>->}[r] & 
		\sL[\gf]_{\fl} \ar@{>->}[r] &
		\sL[\gf]
		}
		$$
	(As an aside, we note that $\sL[\gf]$ is finite flat of degree $N(\gf)$ over $\maxO_H$,
	by~4.3.11 of~\cite{Gurney:thesis},	and hence $\sL[\gf]_{\fl}=\sL[\gf]$.)
	Observe that $\alpha$ is an isomorphism, by~(\ref{pro:periodic-structure2}), and so	
	it is enough to show that $\beta$ is an isomorphism.
	It is enough to show this after base change to $\C$, since $\beta$ is a closed immersion and
	$\sL[\gf]_{\red}$ is flat over $\maxO_K$.
	Further, since the schemes in question are of finite type, it is enough to show that $\beta$ induces a
	surjection on complex points, or in other words that the inclusion 
		$$
		\per{\sL}{\gf}(\C) \hookrightarrow \sL[\gf](\C) 
		$$
	is surjective.
	But this follows from~\ref{pro:lattes-torsor}, which says that $\sL[\gf](\C)$ is $\gf$-periodic.
\end{proof}

\begin{corollary}\label{cor:weber-factorization}
	The map $\sE[\gf]\to\sL$ factors through the closed subscheme $\sL(\gf)_\fl$ of $\sL$.
\end{corollary}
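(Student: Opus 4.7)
The plan is to factor the map $\sE[\gf]\to\sL$ in two stages: first through the periodic locus $\per{\sL}{\gf}$, and then through its flat part $\perfl{\sL}{\gf}$. Recall that since $K$ is imaginary quadratic there are no real places, so $\gf=\gf_{\fin}$ throughout.

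For the first stage, I would use that $\sE\to\sL$ is $\Lambda$-equivariant together with a direct tensor computation. Given $\ga\sim_{\gf}\gb$ in $\Id_{\Pr}$, Proposition~\ref{pro:DR-original-def} produces an element $t\in K^*$ with $\ga=t\gb$ and $t-1\in\gf\gb^{-1}$; the positivity condition at real places is vacuous here. Multiplication by $t$ defines an isomorphism of CM elliptic curves $\ga^{-1}\otimes E \longrightisomap \gb^{-1}\otimes E$, sending the canonical section $1\otimes x$ on the left to $t\otimes x$ on the right. For $x\in E[\gf]$, writing $t-1=\sum_i b_i f_i$ with $b_i\in\gb^{-1}$ and $f_i\in\gf$, we obtain
$$
(t-1)\otimes x = \sum_i b_i \otimes f_i x = 0 \quad\text{in}\quad \gb^{-1}\otimes E,
$$
since each $f_i x=0$. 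Hence the two pairs $(\ga^{-1}\otimes E,1\otimes x)$ and $(\gb^{-1}\otimes E,1\otimes x)$ define the same point of $\sL$; equivalently, $\psi_\ga$ and $\psi_\gb$ induce the same map $\sE[\gf]\to\sE\to\sL$. By $\Lambda$-equivariance, the composite $f\:\sE[\gf]\to\sL$ equalizes $\psi_\ga$ and $\psi_\gb$ on $\sL$ for every such pair, and so $f$ factors through $\per{\sL}{\gf}$.

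For the second stage, $\sE[\gf]\to\MCM$ is finite flat of degree $N(\gf)^2$ (the $\gf$-torsion of an abelian scheme), and $\MCM$ is flat over $\maxO_K$ (its coarse space $\Spec\maxO_H$ is even étale over $\maxO_K$); hence $\sE[\gf]$ is flat over $\maxO_K$. Since $\perfl{\sL}{\gf}\subseteq\per{\sL}{\gf}$ is defined by the ideal sheaf of $\maxO_K$-torsion sections in $\mathcal{O}_{\per{\sL}{\gf}}$, any morphism from an $\maxO_K$-flat source into $\per{\sL}{\gf}$ annihilates this ideal and therefore factors through $\perfl{\sL}{\gf}$.

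I expect the main obstacle to be the first stage: translating the pointwise tensor calculation into a genuine factorization of morphisms (of stacks, strictly speaking, on the source). The computation itself is transparent, but one must check that the natural isomorphism ``multiplication by $t$'' is functorial in $(E,x)$ so as to yield an equality of morphisms rather than a mere pointwise coincidence; this is essentially a functor-of-points exercise once the stacky nature of $\sE[\gf]$ is handled with care.
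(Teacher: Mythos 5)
Your proof is correct, but it takes a genuinely different route from the paper's. The paper's proof is three lines: by functoriality the map lands in the torsion locus $\sL[\gf]$, then in $\sL[\gf]_{\red}$ because $\sE[\gf]$ is reduced, and then one invokes proposition~\ref{pro:lattes-periodic-torsion} (which says $\sL[\gf]_{\red}=\perfl{\sL}{\gf}$, and whose proof reduces to the complex-point torsor computation of proposition~\ref{pro:lattes-torsor}). You instead prove the periodicity of the composite $\sE[\gf]\to\sL$ directly: your tensor calculation $(t-1)\otimes x=\sum_i b_i\otimes f_ix=0$ is precisely the functor-of-points, arbitrary-base upgrade of the period-lattice computation $(t-1)y\in\gf\gb^{-1}\cdot\gf^{-1}\gc=\gb^{-1}\gc$ in the $\Rightarrow$ direction of proposition~\ref{pro:lattes-torsor}(2); and you then use flatness of $\sE[\gf]$ over $\maxO_K$ rather than reducedness to drop into $\perfl{\sL}{\gf}$. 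What each buys: the paper's argument is essentially free given \ref{pro:lattes-periodic-torsion}, whereas yours is self-contained, avoids base change to $\C$ entirely, and in fact reproves (the relevant half of) that proposition along the way. Two small points: the degree of $\sE[\gf]$ over $\MCM$ is $N(\gf)$, not $N(\gf)^2$ --- the $\gf$-torsion of a CM curve is free of rank one over $\maxO_K/\gf$ (this does not affect your argument, which only needs flatness); and, as you note yourself, one must phrase the conclusion as an equality of the two composites $\sE[\gf]\rightrightarrows\sL$ as morphisms of stacks, which your object-by-object computation does establish since the isomorphism ``multiplication by $t$'' is visibly natural in $(E,x)$.
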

\begin{proof}
	By functoriality, it factors through $\sL[\gf]$. Because $\sE[\gf]$ is reduced, it factors
	further through $\sL[\gf]_\red$. Therefore, by~\ref{pro:lattes-periodic-torsion}, it factors
	through $\sL(\gf)_\fl$.
\end{proof}

\begin{proposition}\label{pro:ell-finite-level}
Let $R$ be a flat $\maxO_K$-algebra over which there is a CM elliptic curve $E$. Fix an ideal 
$\gf\subseteq\maxO_K$, and write $G=\maxO_K^*$. Let
$\sL_R(\gf)_\fl$ denote the base change of $\sL(\gf)_\fl$ from $\maxO_H$ to $R$.
Then we have the following:
\begin{enumerate}
	\item The map $\sheafO(\sL_R(\gf)_\fl)\to \sheafO(E[\gf])^G$ induced by 
		corollary~\ref{cor:weber-factorization} is 	injective. 
	\item Its cokernel is a finitely generated $R/nR$-module, where $n$ is the order of $\maxO_K^*$.
	\item For any sufficiently divisible integer $m$, there exists a (unique) 
		morphism making the following diagram commute:
		$$
		\xymatrix{
		\sheafO(\sL_R(m\gf)_\fl)\ar@{>->}[r] \ar[d]
			 & \sheafO(E[m\gf])^G \ar[d]\ar@{-->}[dl] \\
		\sheafO(\sL_R(\gf)_\fl)\ar@{>->}[r]
			 & \sheafO(E[\gf])^G .
		}
		$$
\end{enumerate} 
\end{proposition}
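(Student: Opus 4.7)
The strategy is to identify $\sheafO(\sL_R(\gf)_\fl)$ with the image of $\sheafO(\sL_R)=\sheafO(E)^G$ inside $\sheafO(E[\gf])^G$. By~(\ref{eq:lattes-quotient}) the base change $\sL_R$ is literally the quotient $E/G$, and by Corollary~\ref{cor:weber-factorization} the morphism $E[\gf]\to\sL_R$ factors through $\sL_R(\gf)_\fl$. The map in the proposition therefore factors as
\[
\sheafO(\sL_R(\gf)_\fl)\twoheadrightarrow \sheafO(E)^G/I^G\hookrightarrow \sheafO(E[\gf])^G,
\]
where $I\subseteq\sheafO(E)$ is the ideal of $E[\gf]$. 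Part (1) will follow from showing that the left arrow is an isomorphism, and part (2) from bounding the cokernel of the right inclusion.

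For part (1), the scheme $\sL_R(\gf)_\fl$ is $\maxO_K$-flat: $\sL(\gf)_\fl$ is $\maxO_H$-flat, $\maxO_H\to R$ is flat, and $R$ is $\maxO_K$-flat by hypothesis. So $\sheafO(\sL_R(\gf)_\fl)$ embeds in its generic fiber over $K$. On that generic fiber, Proposition~\ref{pro:lattes-periodic-torsion} combined with~(\ref{eq:lattes-quotient}) identifies $\sL_R(\gf)_\fl\otimes_{\maxO_K} K$ with the already reduced $K$-scheme $(E[\gf]/G)\otimes_{\maxO_K} K$; in particular $E[\gf]\to\sL_R(\gf)_\fl$ is literally the quotient by $G$ on the generic fiber, hence scheme-theoretically surjective. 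By the flatness just noted, this forces the scheme-theoretic image to fill out all of $\sL_R(\gf)_\fl$, i.e., the left surjection above is an isomorphism. For part (2), the standard trace argument applies: any $\bar y\in\sheafO(E[\gf])^G$ lifts to some $\tilde y\in\sheafO(E)$, and then $\sum_{g\in G}g\tilde y\in\sheafO(E)^G$ reduces mod $I$ to $n\bar y$, so $n$ annihilates the cokernel. Finite generation over $R/nR$ follows since $\sheafO(E[\gf])$ is a finite $R$-module.

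Part (3) is the main obstacle. Writing $M_m:=\sheafO(E[m\gf])^G/\sheafO(\sL_R(m\gf)_\fl)$, each $M_m$ is a finitely generated $R/nR$-module killed by $n$, and restriction furnishes transition maps $M_m\to M_{m'}$ for $m'\mid m$; the target is to find $m$ so that $M_m\to M_1$ is zero. The plan is to interpret the cokernels via Galois cohomology: the long exact sequence associated to the $G$-equivariant short exact sequence $0\to I_m\to\sheafO(E)\to\sheafO(E[m\gf])\to 0$ embeds $M_m$ as a subgroup of $H^1(G,I_m)$, and the transition $M_m\to M_1$ is then compatible with the restriction $H^1(G,I_m)\to H^1(G,I_1)$ induced by the inclusion $I_m\subseteq I_1$. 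The hard part is to exhibit an explicit $m$ for which the obstruction classes become coboundaries; I expect this to reduce to a Nakayama-type argument on the finitely generated $R/nR$-module $M_1$: the decreasing chain of images of $M_m$ in $M_1$ must stabilize, and the stable image consists of classes that lift compatibly to every finite level, which by the universal property of $\gf$-periodic Witt vectors (\ref{subsec:periodic-witt-univ-prop}) ought to already be represented in $\sheafO(\sL_R(\gf)_\fl)$, forcing the stable image to be zero.
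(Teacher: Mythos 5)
Your parts (1) and (2) are essentially workable, but note first that $\sheafO(E)$ literally equals $R$ since $E$ is proper; the object you need is the affine fiber product $\sL_R(\gf)_\fl\times_{\sL_R}E$ together with the ideal $I_\gf(R)$ cutting out $E[\gf]$ inside it, and both your factorization and your trace argument silently use the identification $\sheafO(\sL_R(\gf)_\fl)\cong\sheafO(\sL_R(\gf)_\fl\times_{\sL_R}E)^G$, which is not automatic for a non-flat closed subscheme of the quotient $E/G$ and requires the torsion-freeness plus faithful-flatness argument the paper supplies. Granting that, your route to injectivity (torsion-freeness of $\sheafO(\sL_R(\gf)_\fl)$ over $\maxO_K$ plus dominance of $E[\gf]\to\sL_R(\gf)_\fl$ on generic fibers) is a genuine and somewhat more elementary alternative to the paper's proof, which instead shows $I_\gf(R)^G=0$ by reducing flat-locally to $R=\C$ and computing $(U^{\otimes i})^H$ for the cotangent character at the ramified points of $E\to\sL_\C$; your trace argument for (2) is the elementary counterpart of the paper's observation that $H^1(G,-)$ is killed by $|G|$.

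Part (3) is where the gap is, and it is genuine. Your plan rests on two claims, neither of which holds as stated. First, the decreasing chain of images of $M_m$ in $M_1$ need not stabilize: $M_1$ is finitely generated over $R/nR$, which is not artinian in general (and $R$ need not even be noetherian), so finite generation gives ACC on quotients, not DCC on submodules --- compare the chain $p^k\Z\subseteq\Z$. Second, even if the chain stabilized, identifying the stable image with classes that lift compatibly to every level requires a Mittag--Leffler argument you do not have, and the appeal to the universal property of periodic Witt vectors is a non sequitur: that property governs maps out of periodic $\Lambda$-rings into Witt vectors of a target, and says nothing about which $G$-invariant functions on $E[\gf]$ descend to the Latt\`es scheme. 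The actual proof needs a geometric input that is absent from your proposal: after reducing (via the torsion-freeness of $I_\gf$ and a lemma showing that a map of $G$-modules vanishing mod $|G|$ kills higher cohomology) to proving that $I_{p^j\gf}\to I_\gf/pI_\gf$ is zero for each prime $p\mid n$ and suitable $j$, one uses that the subschemes $E_p[p^j\gf]$ of the reduction $E_p=E\bmod p$ contain arbitrary nilpotent thickenings of $E_p[\gf]$ once $j$ is large --- true whether $E_p$ is ordinary or supersingular, because of the connected part of $E_p[p^j]$ --- so that $E_p[p^j\gf]$ eventually contains all of $\sL_R(\gf)_\fl\times_{\sL_R}E_p$ and the transition map on ideals dies mod $p$. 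Without some such input, ``sufficiently divisible $m$'' has no source.
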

\begin{proof}
Write $\sL_R=E/G\cong\bP^1_R$. Then $\sL_R(\gf)_\fl$ is a closed subscheme of $\sL_R$, and 
by~\ref{cor:weber-factorization}, we have the following containment of closed subschemes of $E$:
	$$
	E[\gf] \subseteq \sL_R(\gf)_\fl\times_{\sL_R} E.
	$$
Let $I_\gf(R)$ denote the deal sheaf on $\sL_R(\gf)_\fl\times_{\sL_R} E$ corresponding to $E[\gf]$;
so we have the short exact sequence
\begin{equation}
	\label{seq:hiho}
	0 \longmap I_\gf(R) \longmap \sheafO(\sL_R(\gf)_\fl\times_{\sL_R}E) \longmap \sheafO(E[\gf]) \longmap 0.
\end{equation}
Its long exact sequence of group cohomology begins
\begin{equation}
	\label{diag:g-mod-les}
	0 \longmap I_\gf(R)^G \longmap \sheafO(\sL_R(\gf)_\fl\times_{\sL_R}E)^G \longmap \sheafO(E[\gf])^G
	\longmap H^1(G,I_\gf(R)).	
\end{equation}

We would like to simplify this sequence using the fact that the map
\begin{equation}
	\label{map:gme}
	\sheafO(\sL_R(\gf)_\fl)\to \sheafO(\sL_R(\gf)_\fl\times_{\sL_R}E)^G
\end{equation}
is an isomorphism, which we will now show. 
We have the following diagram of $G$-equivariant quasi-coherent sheaves on $\sL_R$ (dropping the usual
direct-image notation for simplicity):
	$$
	\xymatrix{
	0 \ar[r]
		& \sheafO_{\sL_R} \ar[r]\ar@{->>}[d]
		& \sheafO_E \ar[r] \ar@{->>}[d]
		& M \ar[r]\ar@{->>}[d]
		& 0 \\
	0 \ar[r]
		& \sheafO_{\sL_R(\gf)_\fl} \ar[r]
		& \sheafO_{\sL_R(\gf)_\fl\times_{\sL_R}E} \ar[r]
		& \bar{M} \ar[r]
		& 0,
	}
	$$
where the rows are exact,
$M$ and $\bar{M}$ being defined to be the cokernels as shown. We know $\sheafO_{\sL_R}=\sheafO_E^G$, and so
we have $(\Q\otimes_\Z M)^G=0$ and hence $(\Q\otimes_\Z\bar{M})^G=0$. Therefore the map (\ref{map:gme})
becomes an isomorphism after tensoring with $\Q$. To show it is an isomorphism, it is therefore enough to show
it is surjective. Also observe that both sides are torsion free.
So let $b$ be a $G$-invariant element of $\sheafO(\sL_R(\gf)_\fl\times_{\sL_R}E)$.
Then we have $b=a/n$, for some $a\in\sheafO(\sL_R(\gf)_\fl)$ and some integer $n\geq 1$.
Therefore $a$ becomes a multiple of $n$ in $\sheafO(\sL_R(\gf)_\fl\times_{\sL_R}E)$. But because
$\sL_R(\gf)_\fl\times_{\sL_R}E$ is faithfully flat over $\sL_R(\gf)_\fl$, it must be a multiple of $n$ already
in $\sheafO(\sL_R(\gf)_\fl)$. Therefore $b$ is in $\sheafO(\sL_R(\gf)_\fl)$, and
so the map (\ref{map:gme}) is surjective, and hence an isomorphism.

Thus we can rewrite the long exact sequence (\ref{diag:g-mod-les}) as
\begin{equation}
	\label{diag:g-mod-les2}
	0 \longmap I_\gf(R)^G \longmap \sheafO(\sL_R(\gf)_\fl) \longmap \sheafO(E[\gf])^G
	\longmap H^1(G,I_\gf(R)).	
\end{equation}
Using this sequence, we will prove parts (1)--(3).

(1): We will show $I_\gf(R)^G=0$. First, observe that $I_\gf(R)^G$ is a flat-local construction in $R$:
if $R'/R$ is flat, we have $I_\gf(R')=R'\otimes_R I_\gf(R)$ and hence 
	$$
	I_\gf(R')^G = (R'\otimes_R I_\gf(R))^G = R'\otimes_R I_\gf(R)^G
	$$
since taking invariants under the action of a finite group commutes with flat base change.

Second, since $I_\gf(R)$ is a submodule of $\sheafO(\sL_R(\gf)_\fl)$, which by construction
is flat over $O_K$, its invariant subgroup
$I_\gf(R)^G$ is also flat over $\maxO_K$ and hence maps injectively to 
	$$
	\C\otimes_{\maxO_K}I_\gf(R)^G=(\C \otimes_{\maxO_K} R)\otimes_R I_\gf(R)^G=I_\gf(\C\otimes_{\maxO_K}R)^G.
	$$ 
Therefore to show $I_\gf(R)^G=0$, it is enough 
to do it in the case where
$R$ is a $\C$-algebra; and because any $\C$-algebra is flat over $\C$, we can apply the flat-local
property again and conclude that it is enough
to assume $R=\C$, which we will now do. 

Summing up, we have the diagram
$$
	\xymatrix{
	E[\gf] \ar@{>->}[r]
	& \sL_\C(\gf)_\fl\times_{\sL_\C}E \ar@{>->}[r]\ar[d]
	& E \ar[d] \\
	& \sL_\C(\gf)_\fl \ar@{>->}[r]
	& \sL_\C
	}
$$
where $E\to \sL_\C$ is a $G$-Galois cover of complex curves, the horizontal maps are closed immersions,
and $E[\gf]$ and $\sL_\C(\gf)_\fl$ are reduced. Write
$I_\gf(\C)=\bigoplus_Z I_Z$, where $Z$ runs over the $G$-orbits of $E[\gf](\C)$ and the ideal 
$I_Z$ of $\sheafO(\sL_\C(\gf)_\fl\times_{\sL_\C}E)$
is the part of $I_\gf(\C)$ supported at $Z$. It is then enough to show $I_Z^G=0$ for each $Z$.
So consider the filtration of $I_Z$ by its powers:
	$$
	I_Z \supseteq I_Z^2 \supseteq \cdots \supseteq I_Z^e=\{0\},
	$$
where $e$ is the ramification index of $E$ over $\sL$ at $Z$, or equivalently the order of the stabilizer
subgroup $H\subseteq G$ of a point $x\in Z$. 
Then it is enough to show $(I_Z^n/I_Z^{n+1})^G=0$ for $n=1,\dots,e-1$.

This holds vacuously if $e=1$. So assume $e>1$.
For $n\leq e-1$, the $G$-representation $I_Z^n/I_Z^{n+1}$ is the induced representation
$\mathrm{Ind}_{H}^G(U^{\otimes n})$ where and $U=I_x/I_x^2$ is the
cotangent space of $E$ at $x$. Therefore we have
	$$
	(I_Z^n/I_Z^{n+1})^G = (U^{\otimes n})^{H}.
	$$
Observe that as a representation of $H$, the cotangent space $U$ is isomorphic to the restriction to $H$ of the
representation of $G=\maxO_K^*$ on $\C$ given by usual multiplication. Therefore $U^{\otimes n}$ is the
one-dimensional representation on which a generator $\zeta\in H$ acts as multiplication by
$\zeta^n$. But since $n<e$, and since
$e$ is the order of $H$, we have $\zeta^n\neq 1$. Therefore $\zeta$ acts nontrivially on $U^{\otimes n}$ and
hence we have
	$$
	(I_Z^n/I_Z^{n+1})^G = (U^{\otimes n})^H = 0,
	$$
as desired.

(2): By general properties of group cohomology, $H^1(G,I_\gf(R))$ is an $R$-module of exponent dividing $n$,
and hence an $R/nR$-module.
The cokernel in question is a sub-$R$-module of $H^1(G,I_\gf(R))$ and is therefore also an $R/nR$-module.
It remains to show it is finitely generated. Since it is a quotient of $\sheafO(E[\gf])^G$,
it is enough to show that $\sheafO(E[\gf])^G$ is finitely generated, and hence enough to show this
locally on $R$. But locally $E$ descends to some
finitely generated $\maxO_K$-algebra over which $R$ is flat. Since the formation
of $\sheafO(E[\gf])^G$ commutes with flat base change, it is enough to show finite generation in the case
where $R$ is a finitely generated $\maxO_K$-algebra. Here it holds because $E[\gf]$ is 
finite flat and $R$ is noetherian.

(3): Write $I_\gf=I_\gf(R)$. Then by part (1), for any $m\geq 1$,we have a morphism of exact sequences
$$ 
\xymatrix{
0 \ar[r]
	& \sheafO(\sL_R(m\gf)_\fl) \ar[r]  \ar[d]
	& \sheafO(E[m\gf])^G \ar[r] \ar[d]
	& H^1(G,I_{m\gf}) \ar^\alpha[d] \\
0 \ar[r]
	& \sheafO(\sL_R(\gf)_\fl) \ar[r] 
	& \sheafO(E[\gf])^G \ar[r]
	& H^1(G,I_{\gf}). 
}
$$
To prove (3), it is enough to show the map $\alpha$ above is zero for sufficiently divisible $m$. Since $I_\gf$
is torsion free, it is enough by lemma~\ref{lem:H-vanishing} below to show that the map $I_{m\gf}\to I_\gf$ is
zero modulo $n$ for sufficiently divisible $m$. Further, by part (2), it is
enough to show this instead modulo each prime dividing $n$---that is, it is enough to
show that for each prime $p\mid n$, there exists 
an integer $m$ such that the map $I_{m\gf}\to I_\gf$ is zero modulo $p$.

So fix a prime $p\mid n$. Let $E_p$ denote the reduction of $E$ modulo $p$, which is an elliptic curve
over $R/pR$. Then for any $j\geq 0$, we have a morphism of short exact sequences
	$$
	\xymatrix{
	0 \ar[r] 
		& I_{p^j\gf}/pI_{p^j\gf} \ar[r] \ar[d]
		& \sheafO(\sL_R(p^j\gf)_\fl\times_{\sL_R} E_p) \ar[r]\ar[d]
		& \sheafO(E_p[p^f\gf]) \ar[r]\ar[d]
		& 0 \\
	0 \ar[r] 
		& I_{\gf}/pI_{\gf} \ar[r]
		& \sheafO(\sL_R(\gf)_\fl\times_{\sL_R} E_p) \ar[r]
		& \sheafO(E_p[\gf]) \ar[r]
		& 0.
	}
	$$
(Left-exactness is because $E[\gf]$ and $E[p^j\gf]$ are flat.)
Now whether the fibers of $E_p$ are supersingular or ordinary, the closed subschemes
$E_p[p^j\gf]$ contain arbitrary nilpotent thickenings of $E_p[\gf]$, for large enough $j$. Since
$\sL_R(\gf)_{\fl}\times_{\sL}E_p$ is a nilpotent thickening of $E_p[\gf]$, we can take $j$ such that
$E_p[p^j\gf]$ contains $\sL_R(\gf)_{\fl}\times_{\sL}E_p$. Thus the map
	$$
	I_{p^j\gf}/pI_{p^j\gf} \longmap I_{\gf}/pI_{\gf}
	$$
is zero, and hence so is the map $I_{p^j\gf}\to I_\gf/pI_\gf$, as desired.
\end{proof}

\begin{lemma}\label{lem:H-vanishing}
	Let $G$ be a finite group, and let $n$ denote its order. Let $M\to N$ be a morphism 
	of $G$-modules which vanishes modulo $n$, and assume $N$ is $n$-torsion free.
	Then the induced map $H^i(G,M)\to H^i(G,N)$ is zero, for all $i\geq 1$.
\end{lemma}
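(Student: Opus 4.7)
The plan is to factor the map $M\to N$ as multiplication by $n$ composed with another $G$-equivariant map, and then invoke the standard fact that $H^i(G,-)$ is annihilated by $|G|$ in positive degrees.

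First, I would use the hypothesis that $N$ is $n$-torsion free to observe that the multiplication-by-$n$ map $n\colon N\to N$ is injective and hence induces an isomorphism $N\longrightisomap nN$ of $G$-modules. Since by hypothesis $f\colon M\to N$ vanishes modulo $n$, i.e.\ its image is contained in $nN$, we can compose $f$ with the inverse of this isomorphism to obtain a unique $G$-module map $g\colon M\to N$ satisfying $f=n\cdot g$. The $G$-equivariance of $g$ follows automatically from that of $f$ and of multiplication by $n$.

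Now applying the functor $H^i(G,-)$, the induced map $f_{*}\colon H^i(G,M)\to H^i(G,N)$ equals $n\cdot g_{*}$. The standard fact that $|G|$ annihilates $H^i(G,-)$ for all $i\geq 1$ (proved, for instance, via the composition $\mathrm{cor}\circ\mathrm{res}$ with respect to the trivial subgroup) then gives $n\cdot g_{*}=0$, so $f_{*}=0$, as required.

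There is really no obstacle here; the content of the lemma is essentially a bookkeeping exercise combining two elementary facts. The only step requiring any care is verifying that the factorization $f=n\cdot g$ takes place in the category of $G$-modules, which is immediate from the $n$-torsion freeness of $N$.
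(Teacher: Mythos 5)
Your proof is correct and uses the same argument as the paper: both factor the given map through multiplication by $n$ on $N$ (using $n$-torsion freeness to construct the factorizing map $g$) and then invoke that $H^i(G,N)$ is annihilated by $|G|$ in positive degrees.
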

\begin{proof}
Since our given map $M\to N$ vanishes modulo $n$ and $N$ is $n$-torsion free,
it factors as 
	$$
	\xymatrix{
	M \ar@{-->}[d] \ar[dr] \\
	N \ar^{n}[r] & N.
	}
	$$
Hence the induced map on cohomology factors similarly
	$$
	\xymatrix{
	H^i(G,M) \ar@{-->}[d] \ar[dr] \\
	H^i(G,N) \ar^{n}[r] & H^i(G,N)
	}
	$$
However $H^i(G,N)$ is a $n$-torsion group for $i\geq 1$,
because $G$ has exponent dividing $n$. Therefore the bottom map
is zero and hence so is the diagonal map.
\end{proof}

\begin{corollary}\label{cor:lattes-elliptic-comparison}
	With the notation of proposition~\ref{pro:ell-finite-level}, the map
		$$
		\Big(\sheafO(\sL_R(\gf)_{\fl})\Big)_\gf \longmap \Big(\sheafO(E[\gf])^G\Big)_\gf
		$$
	of pro-rings is an isomorphism.
\end{corollary}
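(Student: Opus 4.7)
The plan is to apply the standard criterion that a morphism of filtered pro-objects $\varphi=(\varphi_\gf)\colon (A_\gf)\to(B_\gf)$ is an isomorphism in the pro-category provided that, for every index $\gf$, there is an index $\gf'$ with $\gf\mid\gf'$ together with a backward map $\psi\colon B_{\gf'}\to A_\gf$ making the two triangles
\[
\psi\circ\varphi_{\gf'} \;=\; \bigl(A_{\gf'}\to A_\gf\bigr),\qquad \varphi_\gf\circ\psi \;=\; \bigl(B_{\gf'}\to B_\gf\bigr)
\]
commute. In the case at hand, $A_\gf=\sheafO(\sL_R(\gf)_\fl)$ and $B_\gf=\sheafO(E[\gf])^G$, with transitions inherited from the inclusions $\sL_R(\gf)_\fl\hookrightarrow\sL_R(\gf')_\fl$ and $E[\gf]\hookrightarrow E[\gf']$ for $\gf\mid\gf'$.

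With this criterion in hand, the proof is immediate from proposition~\ref{pro:ell-finite-level}. Setting $\gf'=m\gf$ for an $m$ sufficiently divisible in the sense of part~(3), the dashed arrow produced there plays the role of $\psi$, and the two triangles appearing in the commutative diagram of that proposition are literally the two compatibilities required above.

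The only point that deserves a word of comment is that these level-wise partial inverses assemble into a morphism in $\mathrm{Pro}(\mathrm{Rings})$ in the opposite direction, rather than a mere family of unrelated arrows. This follows from the uniqueness clause in part~(3), which itself rests on the injectivity established in part~(1): since each $\varphi_\gf$ is a monomorphism, any $\psi$ fitting the lower triangle is determined by its post-composition with $\varphi_\gf$, so different admissible choices of $m$ produce $\psi$'s that agree after passing to a common multiple. This is exactly what is needed to organise them into a pro-morphism inverse to $\varphi$.

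Consequently, the real difficulty is not here but in proposition~\ref{pro:ell-finite-level}, particularly the supersingular/ordinary case analysis at primes $p\mid n=\lvert\maxO_K^*\rvert$, where one uses that $E_p[p^j\gf]$ contains arbitrary nilpotent thickenings of $E_p[\gf]$ for large $j$. Once that proposition is granted, the present corollary is a purely formal repackaging in the pro-category.
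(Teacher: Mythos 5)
Your proof is correct and matches the paper's intent: the corollary is stated without proof precisely because it is the standard pro-category criterion applied to parts (1) and (3) of proposition~\ref{pro:ell-finite-level}, exactly as you argue. Your extra remark about assembling the backward maps via the injectivity of part (1) is a valid (if strictly speaking automatic) verification, and nothing is missing.
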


\begin{proposition}\label{pro:local-cm-maximality}
	Let $\gp$ be a prime of $\maxO_K$,	let $R$ be a finite unramified extension of $\maxO_{K_\gp}$, and
	let $E$ be a CM elliptic curve over $R$. Then for any integer $r\geq 0$,
	$\sheafO(E[\gp^r])$ is $\Lambda_{\gp}$-normal over $\maxO_{K_\gp}$.
\end{proposition}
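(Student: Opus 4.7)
The plan is to apply Theorem~\ref{local-thm} to the generic fiber $K_\gp\otimes_{\maxO_{K_\gp}}B$, where $B:=\sheafO(E[\gp^r])$, to produce a maximal integral $\Lambda_\gp$-model $B^{\max}$, and then identify $B$ with $B^{\max}$. To set up the $\Lambda_\gp$-structure concretely, choose a uniformizer $\pi$ of $\gp\maxO_{K_\gp}$ with the property that the CM endomorphism $[\pi]\:E\to E$ reduces modulo $\gp$ to the $N(\gp)$-power Frobenius (such a choice exists, up to units in $\maxO_K^*$, by the standard CM reduction of Deuring--Shimura; see~\cite{Gurney:thesis}). Then $\psi_\gp:=[\pi]^*$ restricts to an endomorphism of $B$ which is a Frobenius lift, endowing $B$ with its $\Lambda_\gp$-structure. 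On the generic fiber, $S:=E[\gp^r](\bar K_\gp)\cong\maxO_K/\gp^r$ as $\maxO_K$-modules, and $\psi_\gp$ acts as multiplication by $\pi$. The filtration of Theorem~\ref{local-thm} is $S_0=\{0\}$ and $S_i=E[\gp^i](\bar K_\gp)\setminus E[\gp^{i-1}](\bar K_\gp)$ for $1\le i\le r$; both hypotheses of that theorem are vacuous on $S_0=\{0\}$, so an integral $\Lambda_\gp$-model exists, and by Proposition~\ref{pro:maximality-facts} the maximal such $B^{\max}$ exists with $B\subseteq B^{\max}$.

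The main step is to show $B=B^{\max}$, which I would split into cases according to whether $E[\gp^r]$ is \'etale or connected over $R$ (the only two possibilities, depending on the reduction of $E$ at $\gp$). In the \'etale case, $B$ is the full integral closure of $\maxO_{K_\gp}$ in the product of unramified extensions $K_\gp\otimes B$, so $\Lambda_\gp$-maximality is immediate from Proposition~\ref{unramified}. In the connected case, $\hat{E}$ is a formal $\maxO_{K_\gp}$-module of $\maxO_{K_\gp}$-height one of Lubin--Tate type, and $B\cong R[[X]]/([\pi^r](X))$ is a local ring. Here I would first show $B^{\max}$ is also local by ruling out all nontrivial idempotents: writing $e_i$ for the indicator of $S_i$ in the generic fiber $\prod_{i=0}^r K_\gp^{(i)}$, one computes $\psi_\gp(e_r)=0$ and $e_r^{N(\gp)}=e_r$, so the Frobenius-lift congruence $\psi_\gp(e_r)\equiv e_r^{N(\gp)}\pmod{\gp B^{\max}}$ forces $e_r/\pi^k\in B^{\max}$ for all $k$, contradicting finite generation of $B^{\max}$ over $\maxO_{K_\gp}$; downward induction using $\psi_\gp(e_i)=e_{i+1}$ for $1\le i<r$ and $\psi_\gp(e_0)=e_0+e_1$ then excludes every nontrivial idempotent. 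Once $B^{\max}$ is known to be local, a comparison of $\maxO_{K_\gp}$-discriminants inside the maximal ordinary order $R\times\prod_{i=1}^r R_i$ (with $R_i$ the valuation ring of the level-$i$ Lubin--Tate extension of $K_\gp$) matches those of $B$ and $B^{\max}$ and forces $B=B^{\max}$.

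The principal obstacle is the discriminant comparison in the connected case: ruling out all non-idempotent enlargements of $B$ requires either tracking the explicit Lubin--Tate relations $[\pi^r](X)=0$ through the Frobenius-lift condition or invoking a scheme-theoretic maximality property of $E[\gp^r]$ as a finite flat group scheme. A cleaner alternative route is to combine Proposition~\ref{pro:ell-finite-level} and Corollary~\ref{cor:lattes-elliptic-comparison} to reduce to $\Lambda_\gp$-maximality of $\sheafO(\sL(\gp^r)_\fl)$ on the Latt\`es scheme, then transfer back to $B$ by descent along the $\maxO_K^*$-action, using Proposition~\ref{pro:maximality-facts}(2).
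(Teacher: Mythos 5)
Your strategy---producing a maximal $\Lambda_\gp$-order $B^{\max}\supseteq B:=\sheafO(E[\gp^r])$ and then trying to show $B=B^{\max}$---is reasonable, and the idempotent argument for showing $B^{\max}$ is local is a nice observation. But the proof is not complete, and the fallback you propose is circular.

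First, a small structural simplification: the case split into \'etale and connected is unnecessary. Because $\maxO_K$ acts on $T_0(E)$ via the structure map $\maxO_K\to\maxO_{K_\gp}\to R$, the ideal $\gp$ acts topologically nilpotently on the formal group, so $E[\gp^r]$ is \emph{always} connected in this setting; the \'etale case never arises. Also, your idempotent argument as written only rules out the primitive idempotents $e_0,\dots,e_r$; to conclude that $B^{\max}$ is local one must rule out every nontrivial $e_I=\sum_{i\in I}e_i$, which needs the extra (short) observation that the Frobenius-lift congruence $\psi_\gp(e_I)-e_I\in\gp B^{\max}$ forces $\psi_\gp(e_I)=e_I$ (since a nonzero idempotent cannot lie in $\bigcap_k\gp^kB^{\max}=0$), and then that $0$ and $1$ are the only fixed idempotents of the shift $\psi_\gp$.

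The genuine gap is the final step. Knowing that both $B$ and $B^{\max}$ are local, finite of the same rank over $\maxO_{K_\gp}$, and $B\subseteq B^{\max}$ does not by itself give $B=B^{\max}$, and you do not carry out the discriminant computation you gesture at. More seriously, the ``cleaner alternative route'' you offer is circular: Proposition~\ref{pro:ell-finite-level} and Corollary~\ref{cor:lattes-elliptic-comparison} say nothing about $\Lambda$-maximality, while the statement you would need---that $\sheafO(E[\gf])^G$ is the maximal $\Lambda$-order, Proposition~\ref{pro:cm-maximality}---is itself proved via Lemma~\ref{lem:global-cm-maximality}, which in turn is proved from the proposition at hand. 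Moreover Proposition~\ref{pro:maximality-facts}(2) descends normality from $B$ to $B^G$, which is the wrong direction for your ``transfer back'' step. The paper's actual proof avoids the locality-plus-discriminant strategy entirely: it proceeds by induction on $r$, exhibiting $\ringd_r=\sheafO(E[\gp^r])$ as the fiber product $\ringb_r\times_{\bar\ringb_r}\ringd_{r-1}$ coming from the factorization $h^{\circ r}=h^{\circ(r-1)}\cdot g(h^{\circ(r-1)})$ of the iterated Lubin--Tate polynomial, and then shows that any $\Lambda_\gp$-order $C$ containing $B$ must respect the gluing via a direct valuation estimate on the Frobenius-lift congruence (analogous to the $\Gm$ argument of (3.1) in \cite{Borger-deSmit:Integral-models}). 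If you want to repair your approach, you will need to supply an argument of comparable strength in place of the discriminant comparison; the inductive fiber-product method is one way to do so.
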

\begin{proof}
	We follow (3.1) of~\cite{Borger-deSmit:Integral-models}, which is the analogous result for $\Gm$.

	For $r=0$, it is clear. So we may assume that $r\geq 1$ and, by induction, that
	$\sheafO(E[\gp^{r-1}])$ is $\Lambda_{\gp}$-normal over $\maxO_{K_\gp}$.
	
	The $\gp^r$-torsion $E[\gp^r]$ can be obtained by gluing the $\gp^{r-1}$-torsion $E[\gp^{r-1}]$ and the
	locus of exact order $\gp^r$, as follows. Let $\ringd_r$ denote $\sheafO(E[\gp^r])$.
	Let $\pi$ be a generator of $\gp\maxO_{K_\gp}$, and consider the Lubin--Tate polynomial
	$h(z)\in R[z]$ for $E$ with respect to the uniformizer $\pi$ and
	some fixed local coordinate. So we have $h(z)=zg(z)$, where $g(z)$ is a monic 
	Eisenstein polynomial with $g(0)=\pi$.
	Then $\ringd_r$ is identified with $R[z]/(h^{\circ r}(z))$. Because 
	$h^{\circ r}(z)=h^{\circ r-1}(z)\cdot g(h^{\circ r-1}(z))$, we have the following diagram
	of quotients of $R[z]$:
		$$
		\xymatrix{
		R[z]/(h^{\circ r}(z)) \ar[d]\ar[r] & R[z]/(h^{\circ r-1}(z)) \ar[d] \\
		R[z]/(g(h^{\circ r-1}(z))) \ar[r] & R[z]/(h^{\circ r-1}(z),g(h^{\circ r-1}(z))).
		}
		$$
	Observe that $g(h^{\circ r-1}(z))$ is a monic Eisenstein polynomial. For degree reasons,
	it does not divide $h^{\circ r-1}(z)$, and so the least common multiple of these two polynomials
	is their product. Therefore we have
		$$
		(h^{\circ r}(z)) = (h^{\circ r-1}(z))\cap (g(h^{\circ r-1}(z))),
		$$
	and hence the diagram above is a pull-back diagram. Further, if we put
	$\ringb_r=R[z]/(g(h^{\circ r-1}(z)))$, then $\ringb_r$ is the ring of integers in
	a totally ramified extension of $K_\gp$.	
	Let $\pi_r\in \ringb_r$ denote the corresponding uniformizer, namely the coset of $z$.
	Thus we have a pull-back diagram
		$$
		\xymatrix{
		\ringd_r \ar@{->>}[d]\ar@{->>}[r] & \ringd_{r-1} \ar@{->>}[d] \\
		\ringb_r \ar@{->>}[r] & \bar{\ringb}_r,
		}
		$$
	where $\bar{\ringb}_r=\ringb_r/(h^{\circ r-1}(\pi_r))$. Further observe that 
	the element $h^{\circ r-1}(\pi_r)\in \ringb_r$ is
	a root of $g(z)$, which is an Eisenstein polynomial of degree $q-1$,
	where we write $q=N(\gp)$. Therefore we have 
	$v_\gp(h^{\circ r-1}(\pi_r))=1/(q-1)$, where $v_\gp$ is the valuation normalized such that $v_\gp(\pi)=1$.
	
	Now suppose $C$ is a sub-$\Lambda_\gp$-ring of $K_\gp\otimes_{\maxO_{K_\gp}} \ringd_r$ which is finite over 
	$\maxO_{K_\gp}$. The maximality statement we wish to prove is that $C$ is contained in 
	$\ringd_r=\ringb_r\times_{\bar{\ringb}_r} \ringd_{r-1}$.
	By induction, $\ringd_{r-1}$ is $\Lambda_\gp$-normal over $\maxO_{K_\gp}$, and hence 
	the image of $C$ in $K_\gp\otimes_{\maxO_{K_\gp}}\ringd_{r-1}$ is contained in $\ringd_{r-1}$.  Similarly, 
	since $\ringb_r$ is a maximal order in the usual sense, the image of $C$ in 
	$K_\gp\otimes_{\maxO_{K_\gp}}\ringb_r$ is contained in $\ringb_r$. Putting the two together, we have the 
	containment $C\subseteq \ringb_r\times \ringd_{r-1}$.
	
	To show $C \subseteq \ringb_r\times_{\bar{\ringb}_r} \ringd_{r-1}$, let us suppose that this does not hold.
	Then there is an element $(b,\overline{f(z)})\in C\subseteq \ringb_r\times \ringd_{r-1}$ such that 
	$b$ and $\overline{f(z)}$ do not become equal in $\bar{\ringb}_r$. In other words, we have 
	$$
	v_\gp(b-f(\pi_r))< 1/(q-1).
	$$
	Further we can choose $(b,\overline{f(z)})\in C$ such that $v_\gp(b-f(\pi_r))$ is as small as possible.
	Write $a=v_\gp(b-f(\pi_r))$.
	Since $C$ is a sub-$\Lambda_\gp$-ring of $K_\gp\otimes_{\maxO_{K_\gp}}\ringd_r$, we know
		$$
		(b,\overline{f(z)})^q-\psi_\gp((b,\overline{f(z)})) \in \pi C
		$$
	The left side can be simplified using the fact $\psi_\gp(z)=h(z)$ on $\ringd_r$:
		$$
		(b,\overline{f(z)})^q-\psi_\gp((b,\overline{f(z)})) 
			= (b^q-f^*(h(\pi_r)),\overline{f(z)}^q-\overline{f^*(h(z))})
		$$
	where $f^*(z)$ the polynomial obtained by applying the Frobenius map to each coefficient of $f(z)$.
	Therefore by the minimality of $a$, we have
	\begin{align*}
		a &\leq v_\gp\Big(\frac{b^q-f^*(h(\pi_r))}{\pi} - \frac{f(z)^q-f^*(h(z))}{\pi}\Big|_{z=\pi_r}\Big) 
		= v_\gp\Big(\frac{b^q-f(\pi_r)^q}{\pi}\Big)  \\
		&= v_\gp\Big(\frac{(b-f(\pi_r))^q}{\pi} + (b-f(\pi_r))\cdot c(b,f(\pi_r))\Big)		
	\end{align*}
	where $c(X,Y)$ denotes the polynomial 
	$$
	\frac{(X^q-Y^q) - (X-Y)^q}{\pi(X-Y)} \in \maxO_{K_\gp}[X,Y].
	$$
	But again by the minimality of $a$, we know $a\leq v_\gp(b-f(\pi_r))$. Therefore we have
		$$
		a\leq v_\gp\Big(\frac{(b-f(\pi_r))^q}{\pi}\Big) = qa-1
		$$
	and hence $a\geq 1/(q-1)$.	This contradicts the assumption $a<1/(q-1)$.
\end{proof}

\begin{lemma}\label{lem:global-cm-maximality}
	Let $\scalA=\maxO_K[1/t]$, for some $t\in\maxO_K$, and let 
	$R$ be a  $\Lambda$-ring, finite \'etale over $A$, over which
	there exists a CM elliptic curve $E$.
	Then for any ideal $\gf\subseteq\maxO_K$, 
	the $\Lambda$-ring $\sheafO(E[\gf])$ is $\Lambda$-normal over $\scalA$. 
\end{lemma}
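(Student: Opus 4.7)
The plan is to reduce to the local Proposition~\ref{pro:local-cm-maximality} by a standard d\'evissage. By Proposition~\ref{pro:maximality-facts}(1), $\Lambda$-normality over $\scalA$ can be checked after base change to $\maxO_{K_\gp}$ at each maximal ideal $\gp$ of $\scalA$. So I would fix such a $\gp$. Since $R$ is finite \'etale over $\scalA$, the base change $R\otimes_\scalA\maxO_{K_\gp}$ is a finite product $\prod_i R_i$, where each $R_i$ is the ring of integers in a finite unramified extension of $K_\gp$; correspondingly $E$ pulls back to a disjoint union $\coprod_i E_i$ and $\sheafO(E[\gf])\otimes_\scalA\maxO_{K_\gp}$ splits as $\prod_i\sheafO(E_i[\gf])$. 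By Proposition~\ref{pro:maximality-facts}(3) it then suffices to treat each factor separately, so we may assume outright that $R$ is the ring of integers in a finite unramified extension of $K_\gp$ and that $E$ is a CM elliptic curve over such an $R$.

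Next I would write $\gf=\gp^r\gf'$ with $\gcd(\gf',\gp)=1$. Since $\gp^r$ and $\gf'$ are coprime in $\maxO_K$, the finite flat $R$-group scheme $E[\gf]$ decomposes canonically as $E[\gp^r]\times_R E[\gf']$, giving a product decomposition $\sheafO(E[\gf])=\sheafO(E[\gp^r])\times\sheafO(E[\gf'])$ of $\Lambda_\gp$-rings. A further application of Proposition~\ref{pro:maximality-facts}(3) reduces the problem to showing each of the two factors is $\Lambda_\gp$-normal over $\maxO_{K_\gp}$. The $\gp$-primary factor is exactly the content of Proposition~\ref{pro:local-cm-maximality}, so nothing remains to be done there.

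For the prime-to-$\gp$ factor I would argue that $E[\gf']$ is finite \'etale over $R$. If the residue characteristic $p$ of $\gp$ is inert or ramified in $K$, then $\gf'$ is coprime to $p$, so $E[\gf']$ is the kernel of an isogeny of degree invertible on $R$ and is automatically \'etale. If instead $p$ splits as $\gp\bar\gp$, the $\maxO_K$-action refines the connected--\'etale decomposition of the $p$-divisible group: over the unramified base $R$ the sequence $0\to E[\bar\gp^\infty]^\circ\to E[p^\infty]\to E[p^\infty]^{\mathrm{\'et}}\to 0$ splits canonically, and the $\maxO_K$-action forces $E[\bar\gp^\infty]$ to be \'etale and $E[\gp^\infty]$ to be connected; hence $E[\gf']$ is \'etale for any $\gf'$ coprime to $\gp$. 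In either case $\sheafO(E[\gf'])$ is a product of rings of integers in finite unramified extensions of $K_\gp$, and by Proposition~\ref{unramified} it carries a unique $\Lambda_\gp$-structure and is already the maximal order in the usual sense in its generic fibre; in particular it is $\Lambda_\gp$-normal.

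The main obstacle I expect is the split-prime case of the \'etaleness claim: here one really has to invoke the structure theory of CM $p$-divisible groups over an unramified base to split the connected--\'etale sequence $\maxO_K$-equivariantly and identify $E[\bar\gp^\infty]$ as the \'etale summand. Once that is in hand, everything else is purely formal: the global-to-local reduction uses only finite \'etaleness of $R$ together with the two elementary principles recorded in Proposition~\ref{pro:maximality-facts}, and the $\gp$-primary contribution is exactly Proposition~\ref{pro:local-cm-maximality}.
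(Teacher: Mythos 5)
Your global-to-local reduction (localize at each $\gp\nmid t$ via Proposition~\ref{pro:maximality-facts}(1), split the finite \'etale base via part (3)) matches the paper, and you are right to flag that the \'etaleness of $E[\gf']$ for $\gf'$ prime to $\gp$ needs the CM normalization in the split case --- the paper uses this silently. But there is a genuine error at the central step. From the coprime decomposition $E[\gf]\cong E[\gp^r]\times_R E[\gf']$ of group schemes you deduce a \emph{direct product} of rings $\sheafO(E[\gf])=\sheafO(E[\gp^r])\times\sheafO(E[\gf'])$. This is false: a fibre product of affine $R$-schemes corresponds to the tensor product $\sheafO(E[\gp^r])\otimes_R\sheafO(E[\gf'])$, not the ring-theoretic product. (Compare $\mu_6=\mu_2\times_{\Z}\mu_3$: the coordinate ring $\Z[x]/(x^6-1)$ has rank $6$, whereas $\Z[x]/(x^2-1)\times\Z[y]/(y^3-1)$ has rank $5$.) Consequently Proposition~\ref{pro:maximality-facts}(3) does not apply as you use it, and your separate treatment of $\sheafO(E[\gf'])$ proves normality of a ring that is not actually a direct factor of $\sheafO(E[\gf])$. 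Note also that there is no general principle saying a tensor product of two $\Lambda$-normal rings is $\Lambda$-normal, so the gap cannot be papered over by just replacing $\times$ with $\otimes_R$ and keeping the rest.

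The repair is exactly where your \'etaleness observation should have been deployed. Since $E_{R_\gp}[\ggg]$ (with $\ggg=\gf'$) is finite \'etale over $\maxO_{K_\gp}$, write $\sheafO(E_{R_\gp}[\ggg])=\prod_{R'}R'$ with each $R'$ the ring of integers in a finite unramified extension of $K_\gp$. Then
$$
\maxO_{K_\gp}\otimes_\scalA\sheafO(E[\gf])
=\sheafO(E_{R_\gp}[\gp^r])\otimes_{R_\gp}\sheafO(E_{R_\gp}[\ggg])
=\prod_{R'}\sheafO(E_{R'}[\gp^r]),
$$
so the prime-to-$\gp$ torsion is absorbed into the (still unramified) base rather than appearing as a standalone factor. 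Now Proposition~\ref{pro:maximality-facts}(3) applies to this genuine product, and each factor $\sheafO(E_{R'}[\gp^r])$ is $\Lambda_\gp$-normal by Proposition~\ref{pro:local-cm-maximality} applied over the base $R'$. This is the paper's argument; with that substitution your proof goes through.
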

\begin{proof}
	Let $S$ denote the maximal $\Lambda$-order over $A$ 
	in $K\otimes_{\maxO_K}\sheafO(E[\gf])$. 
	Then we have an inclusion $\sheafO(E[\gf])\subseteq S$ of finite flat $\scalA$-algebras. 
	Therefore it is an equality if 
		$\maxO_{K_\gp}\otimes_\scalA \sheafO(E[\gf])=\maxO_{K_\gp}\otimes_\scalA S$
	for all primes $\gp\nmid t$. Thus it is enough to show that $\maxO_{K_\gp}\otimes_\scalA \sheafO(E[\gf])$ is
	$\Lambda_\gp$-normal over $\maxO_{K_\gp}$ for all $\gp\nmid t$.
	
	Write $\gf=\gp^r\ggg$, where $\gp\nmid\ggg$. Then putting $R_\gp=\maxO_{K_\gp}\otimes_\scalA R$, we have
	\begin{align*}
		\maxO_{K_\gp}\otimes_\scalA \sheafO(E[\gf]) 
			&= \maxO_{K_\gp}\otimes_\scalA (\sheafO(E[\gp^r])\otimes_R \sheafO(E[\ggg])) \\
		 	&= (\maxO_{K_\gp}\otimes_\scalA \sheafO(E[\gp^r]))\otimes_{R_\gp} 
				(\maxO_{K_\gp}\otimes_\scalA \sheafO(E[\ggg])) \\
			&= \sheafO(E_{R_\gp}[\gp^r]) \otimes_{R_\gp} \sheafO(E_{R_\gp}[\ggg]) \\
			&= \prod_{R'} \sheafO(E_{R'}[\gp^r]), 
	\end{align*}
	where $R'$ runs over all irreducible direct factors of the finite \'etale $\maxO_{K_\gp}$-algebra 
	$\sheafO(E_{R_\gp}[\ggg])$.
	Therefore it is enough to show that each $\sheafO(E_{R'}[\gp^r])$ 
	is $\Lambda_\gp$-normal over $\maxO_{K_\gp}$. 
	But this follows from~\ref{pro:local-cm-maximality}.
\end{proof}

\begin{proposition}\label{pro:cm-maximality}
	Let $\scalA=\maxO_K[1/t]$, for some $t\in\maxO_K$. Put $R=\maxO_H[1/t]$ and assume
	there exists a CM elliptic curve $E$ over $R$. Write $G=\maxO_K^*$.
	Then the maximal $\Lambda$-order in 
	$K\otimes_{\scalA}\sheafO(\sL_R(\gf)_\fl)$ over $\scalA$ is $\sheafO(E[\gf])^G$.
\end{proposition}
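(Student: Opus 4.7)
The plan is to combine Lemma~\ref{lem:global-cm-maximality}, the hereditary property of $\Lambda$-normality under group invariants (Proposition~\ref{pro:maximality-facts}(2)), and the generic isomorphism supplied by Proposition~\ref{pro:ell-finite-level}.

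First I would invoke Lemma~\ref{lem:global-cm-maximality} to conclude that $\sheafO(E[\gf])$ is $\Lambda$-normal over $\scalA$. Next I would check that the natural action of $G=\maxO_K^*$ on $E$ by CM automorphisms (coming from the inclusion $\maxO_K^* \subseteq \End_R(E)^*$) is through $\maxO_K$-module automorphisms of $E$, so that it commutes with each canonical isogeny $E \to \ga^{-1}\otimes E$, $x\mapsto 1\otimes x$, and therefore with every $\psi_\ga$. Hence $G$ acts by $\Lambda$-ring automorphisms on $\sheafO(E[\gf])$, and Proposition~\ref{pro:maximality-facts}(2) yields that $\sheafO(E[\gf])^G$ is itself $\Lambda$-normal over $\scalA$, i.e.\ is the maximal $\Lambda$-order in its own generic fiber $K\otimes_\scalA \sheafO(E[\gf])^G$.

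It then remains to identify this generic fiber with $K\otimes_\scalA\sheafO(\sL_R(\gf)_\fl)$. Proposition~\ref{pro:ell-finite-level}(1)--(2) supplies an injection $\sheafO(\sL_R(\gf)_\fl) \hookrightarrow \sheafO(E[\gf])^G$ whose cokernel is killed by the positive integer $n=|G|\in\{2,4,6\}$, which is a unit in $K$. Tensoring with $K$ over $\scalA$ is exact and annihilates this cokernel, giving an isomorphism
$$
K \otimes_\scalA \sheafO(\sL_R(\gf)_\fl) \longrightisomap K \otimes_\scalA \sheafO(E[\gf])^G.
$$
Under this identification, $\sheafO(E[\gf])^G$ is a $\Lambda$-order in $K\otimes_\scalA\sheafO(\sL_R(\gf)_\fl)$ containing $\sheafO(\sL_R(\gf)_\fl)$, and by the previous paragraph it is the maximal such order.

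The main obstacle is verifying the $G$-equivariance of the $\Lambda$-structure asserted in the second step, since the $\Lambda$-structure on $\sheafO(E[\gf])$ is built locally via Lubin--Tate formal groups at primes dividing $\gf$ and via the unramified uniqueness of Proposition~\ref{unramified} elsewhere. In both cases the construction depends only on the $\maxO_K$-module structure of $E[\gf]$, with which the $G$-action manifestly commutes; once this is spelled out, the rest of the proof is a direct assembly of the cited results.
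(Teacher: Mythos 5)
Your proposal is correct and follows essentially the same route as the paper's proof: Lemma~\ref{lem:global-cm-maximality} together with Proposition~\ref{pro:maximality-facts}(2) gives $\Lambda$-normality of $\sheafO(E[\gf])^G$, and Proposition~\ref{pro:ell-finite-level} identifies the generic fibers since the cokernel is killed by $|G|$. Your extra care in checking that $G$ acts by $\Lambda$-automorphisms (because the $\maxO_K^*$-action commutes with the canonical isogenies $E\to\ga^{-1}\otimes E$) is a point the paper leaves implicit, but it is not a different argument.
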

\begin{proof}
	By lemma~\ref{lem:global-cm-maximality} and part (2) of proposition~\ref{pro:maximality-facts},
	the $G$-invariant subring $\sheafO(E[\gf])^{G}$ is 
	$\Lambda$-normal over $\scalA$. Since
	$K\otimes_{\scalA}\sheafO(\sL_R(\gf)_\fl)= K\otimes_{A}\sheafO(E[\gf])^G$, by part (2) of 
	proposition~\ref{pro:ell-finite-level},
	we can conclude that $\sheafO(E[\gf])^{G}$ is the maximal $\Lambda$-order over $\scalA$ in
	$K\otimes_{\scalA}\sheafO(\sL_R(\gf)_\fl)$.
\end{proof}

\begin{thm}\label{thm:cm-pro-ring}
	Let $\sheafO(\sL(\gf)_{\fl})^{\sim}$ denote the maximal $\Lambda$-order over $\maxO_K$
	in $K\otimes_{\maxO_K}\sheafO(\sL(\gf)_{\fl})$. Then 
	\begin{enumerate}
		\item $\sheafO(\sL(\gf)_{\fl})^{\sim}$ is isomorphic as a $\Lambda$-ring to 
			the ray class algebra $\rcl{\maxO_K,\Pr}{\gf}$.
		\item The map 
			$$
			\big(\sheafO(\sL(\gf)_{\fl})\big)_\gf \longlabelmap{}
				\big(\sheafO(\sL(\gf)_{\fl})^{\sim}\big)_\gf
			$$
			is an isomorphism of pro-rings.
		\item If a prime number divides the index $[\sheafO(\sL(\gf)_{\fl})^{\sim}:\sheafO(\sL(\gf)_{\fl})]$
		then it divides the order of the unit group $\maxO_K^*$.
	\end{enumerate} 
\end{thm}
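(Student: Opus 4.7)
The plan is to treat the three parts sequentially, using the structural results established so far.

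\textbf{Part (1).} The strategy is to identify the generic fiber of $\sheafO(\sL(\gf)_\fl)$ as a $\Lambda_{\maxO_K,\Pr}$-ring over $K$, and then take maximal $\Lambda$-orders on both sides. By Proposition~\ref{pro:lattes-periodic-torsion} we have $\sL(\gf)_\fl(\C)=\sL[\gf](\C)$, and by Proposition~\ref{pro:lattes-torsor}(2) this is a $\DR_\Pr(\gf)$-torsor, hence isomorphic to the free pointed $\DR_\Pr(\gf)$-set on one generator. By the definition of the ray class algebra in~\ref{subsec:ray-class-lambda-rings}, this is precisely the $\G\times\Id_\Pr$-set corresponding to $K\otimes_{\maxO_K}\rcl{\maxO_K,\Pr}{\gf}$. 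Grothendieck--Galois theory then yields an isomorphism $K\otimes_{\maxO_K}\sheafO(\sL(\gf)_\fl)\cong K\otimes_{\maxO_K}\rcl{\maxO_K,\Pr}{\gf}$ of $\Lambda$-rings, and taking maximal $\Lambda$-orders on both sides gives~(1).

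\textbf{Parts (2) and (3).} The plan is to combine the local-global principle for maximality (Proposition~\ref{pro:maximality-facts}(1)) with the elliptic-curve machinery of Propositions~\ref{pro:ell-finite-level} and~\ref{pro:cm-maximality}. First I fix $t\in\maxO_K$ so that there is a CM elliptic curve $E$ over $R=\maxO_H[1/t]$, and set $\scalA=\maxO_K[1/t]$. Proposition~\ref{pro:cm-maximality} identifies $\scalA\otimes_{\maxO_K}\sheafO(\sL(\gf)_\fl)^\sim$ with $\sheafO(E[\gf])^G$, where $G=\maxO_K^*$; and Proposition~\ref{pro:ell-finite-level}(2) forces the cokernel of the inclusion $\scalA\otimes_{\maxO_K}\sheafO(\sL(\gf)_\fl)\hookrightarrow\sheafO(E[\gf])^G$ to be an $R/nR$-module, where $n=|G|$. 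Thus at every prime $\gp$ of $\maxO_K$ with $\gp\nmid nt$, the index is trivial. For a prime $\gp\mid t$ with $\gp\nmid n$, I handle it by changing the choice of $t$: by potential good reduction of CM elliptic curves over local fields, some CM elliptic curve has good reduction at $\gp$ after possibly a finite unramified extension of $\maxO_{H_\gp}$, so the same argument applies locally at $\gp$ using Proposition~\ref{pro:local-cm-maximality} and a local analogue of Proposition~\ref{pro:ell-finite-level}. This establishes~(3). Part~(2) then follows from Proposition~\ref{pro:ell-finite-level}(3): for each $\gf$ and sufficiently divisible $m$, the composite $\sheafO(E[m\gf])^G\to\sheafO(E[\gf])^G$ factors through $\sheafO(\sL_R(\gf)_\fl)$, so the pro-ring map $(\sheafO(\sL(\gf)_\fl))_\gf\to(\sheafO(\sL(\gf)_\fl)^\sim)_\gf$ becomes an isomorphism after inverting $t$; using~(3) to restrict attention to primes dividing $n$, a local analysis at those primes completes the globalization.

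\textbf{Main obstacle.} The principal difficulty will be the globalization step in parts~(2) and~(3): the elliptic-curve arguments assume a globally defined CM elliptic curve over $R=\maxO_H[1/t]$, yet $\sheafO(\sL(\gf)_\fl)^\sim$ is naturally defined over all of $\maxO_K$. Covering the primes $\gp\mid t$ (where no single chosen CM curve has good reduction) requires switching to a different CM elliptic curve---or passing to a finite unramified extension of $\maxO_{K_\gp}$---and verifying that the resulting local comparisons patch into the global statements. The technical ingredients for this patching are all in place in Propositions~\ref{pro:local-cm-maximality} through~\ref{pro:cm-maximality}; what remains is careful bookkeeping at the bad primes.
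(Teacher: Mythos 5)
Your part (1) is essentially the paper's argument: reduce to an isomorphism of $\G\times\Id_\Pr$-sets (equivalently of $\Id_\Pr$-sets, since $\Pr$ is Chebotarev dense), invoke Proposition~\ref{pro:lattes-periodic-torsion} to pass from the periodic locus to the torsion locus, then Proposition~\ref{pro:lattes-torsor}(2) and the defining universal property of the ray class algebra. That part is fine.

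Parts (2) and (3) contain a genuine gap in the globalization step. You fix a single $t$ so that a CM elliptic curve exists over $\maxO_H[1/t]$, get the statement away from $t$, and then try to cover the primes $\gp\mid t$ by invoking ``potential good reduction of CM elliptic curves over local fields'' to claim that some CM elliptic curve acquires good reduction at $\gp$ over a finite \emph{unramified} extension of $\maxO_{H_\gp}$. That claim is false as stated: CM elliptic curves have potential good reduction, but the field over which good reduction is acquired is in general a \emph{ramified} extension (it is the extension killing the twist by a uniformizer, which is cyclic of degree dividing $|\maxO_K^*|$). Indeed, the paper warns in the introduction that ``it might be that none of them has good reduction everywhere.'' So the hypothesis $R$ finite unramified over $\maxO_{K_\gp}$ in Proposition~\ref{pro:local-cm-maximality} is not met by your argument, and the unspecified ``local analogue of Proposition~\ref{pro:ell-finite-level}'' you would further need is not developed anywhere in the paper.

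The paper's actual route is cleaner and avoids the local issue entirely. It invokes Shimura's theorem (\cite{Gurney:thesis}, Prop.\ 4.2.2): there are infinitely many primes $\gq$ of $\maxO_K$ for which there exists a CM elliptic curve over $\maxO_H\otimes_{\maxO_K}\maxO_K[\gq^{-1}]$. Choosing two such primes $\gq_1\ne\gq_2$, the two open subschemes $\Spec\maxO_K[\gq_i^{-1}]$ cover $\Spec\maxO_K$. Over each $A=\maxO_K[\gq_i^{-1}]$, Proposition~\ref{pro:maximality-facts}(1) gives $\sheafO(\sL_R(\gf)_\fl)^\sim=A\otimes_{\maxO_K}\sheafO(\sL(\gf)_\fl)^\sim$, Proposition~\ref{pro:cm-maximality} identifies this with $\sheafO(E[\gf])^G$, and then Corollary~\ref{cor:lattes-elliptic-comparison} (for part (2)) and Proposition~\ref{pro:ell-finite-level}(2) (for part (3)) finish. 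No bad primes need to be handled separately, and no new local lemma is required. If you want to salvage your version, you must replace the potential-good-reduction step by Shimura's theorem, and delete the appeal to an unproved local analogue of Proposition~\ref{pro:ell-finite-level}.
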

\begin{proof}
	(1): Since $\sheafO(\sL(\gf)_{\fl})^{\sim}$ is the maximal $\Lambda$-order over $\maxO_K$ in 
	$K\otimes_{\maxO_K}\sheafO(\sL(\gf)_{\fl})$, and since $\rcl{\maxO_K,\Pr}{\gf}$ is that in
	$K\otimes_{\maxO_K}\rcl{\maxO_K,\Pr}{\gf}$, it is enough to show there is an isomorphism
		$$
		\sL(\gf)(\bar{\Q})\cong \DR_{\Pr}(\gf)
		$$
	of $\Id_{\Pr}$-sets, but this is follows immediately from part (2) of proposition~\ref{pro:lattes-torsor}.
	
	(2):
	Since the rings 
	$$
	\scalA=\maxO_K[\gq^{-1}]
	$$
	for any two primes $\gq\subset \maxO_K$ form a cover of $\maxO_K$, it is enough
	to prove the map of pro-rings is an isomorphism after base change from $\maxO_K$ to any two such $\scalA$.
	By Shimura's theorem (\cite{Gurney:thesis}, prop.\ 4.2.2), there are at least two primes 
	$\gq\subset\maxO_K$ such that there exists a CM elliptic curve over $R=\maxO_H\otimes_{\maxO_K}A$.
	(In fact, he proves there are infinitely many.)
	So fixing $\scalA$ and a CM elliptic curve $E$ over $\maxO_H\otimes_{\maxO_K}A$, it is enough to show that
		\begin{equation}
			\label{eq:sdfkl}
			\Big(\scalA\otimes_{\maxO_K}\sheafO(\sL(\gf)_{\fl})\Big)_\gf \longlabelmap{}
				\Big(\scalA\otimes_{\maxO_K}\sheafO(\sL(\gf)_{\fl})^{\sim}\Big)_\gf
		\end{equation}
	is an isomorphism of pro-rings.

	But we also have 
		$$
		\scalA\otimes_{\maxO_K}\sheafO(\sL(\gf)_\fl) = R\otimes_{\maxO_H} \sheafO(\sL(\gf)_\fl)
		= \sheafO(\sL_R(\gf)_{\fl})
		$$
	and hence by part (1) of proposition~\ref{pro:maximality-facts}
		$$
		\sheafO(\sL_R(\gf)_{\fl})^{\sim} =
		\scalA\otimes_{\maxO_K}\sheafO(\sL(\gf)_{\fl})^{\sim}.
		$$
	Therefore (\ref{eq:sdfkl}) can be rewritten as 
		\begin{equation}
			\label{eq:sdfkl2}
			\Big(\sheafO(\sL_R(\gf)_{\fl})\Big)_\gf \longlabelmap{}
				\Big(\sheafO(\sL_R(\gf)_{\fl})^{\sim}\Big)_\gf.
		\end{equation}
	We know further that the map 
		$$
		\sheafO(\sL_R(\gf)_{\fl})^{\sim} \longmap \sheafO(E[\gf])^G
		$$
	is an isomorphism by~\ref{pro:cm-maximality}.
	So the map (\ref{eq:sdfkl2}) can in turn be rewritten as
		\begin{equation}
			\label{eq:sdfkl3}
			\Big(\sheafO(\sL_R(\gf)_{\fl})\Big)_\gf \longlabelmap{}
				\Big(\sheafO(E[\gf])^G\Big)_\gf,
		\end{equation}
	and this is an isomorphism of pro-rings by~\ref{cor:lattes-elliptic-comparison}.
	
(3): It is enough to show these properties locally on $\maxO_K$. So as above, it is enough to show
them after base change to $A=\maxO_K[\gq^{-1}]$, where $\gq$ is a prime of $\maxO_K$ such that 
there is a CM elliptic curve $E$ over $R=\maxO_H\otimes_{\maxO_K}A$.

Then the map $A\otimes_{\maxO_K}\sheafO(\sL_R(\gf)_\fl) \to A\otimes_{\maxO_K}\sheafO(\sL_R(\gf)_\fl)^{\sim}$ is 
identified with 
	$$
	\sheafO(\sL_R(\gf)_\fl) \longmap \sheafO(E[\gf])^G.
	$$
By part (2) of proposition~\ref{pro:ell-finite-level}, 
the image of this map is of finite index divisible only by the primes dividing
the order of $\maxO_K^*$.
\end{proof}

\section{Further questions}
Is it possible to use $\Lambda$-schemes of finite type to generate other large abelian extensions, beyond the
Kroneckerian explicit class field theories?  We will formulate a range of such questions in this section,
some of which it is reasonable to hope have a positive answer and some which are more ambitious.

\subsection{} \emph{$\Lambda$-geometric field extensions.}
So far, we have mostly been interested in what in the introduction we called \emph{$\Lambda$-refinements} of
explicit class field theories---that is, in generating ray class algebras instead of abelian field extensions.
But hungry for positive answers, we will give weaker, field-theoretic formulations here.

Let $K$ be a number field, and write 
	$$
	\Lambda=\Lambda_{\maxO_K,\mx_K}\quad \text{and}\quad \rcl{}{\gf}=\rcl{\maxO_K,\mx_K}{\gf}.
	$$
For any separated $\Lambda$-scheme $X$ of finite type over $\maxO_K$, consider
the extension of $K$ obtained by adjoining the coordinates of the $\gf$-periodic points, for all cycles $\gf$:
	$$
	K(X) := \bigcup_\gf K(X(\gf)(\bar{K})).
	$$
For instance, we have the following: 
	\begin{enumerate}
		\item If $K=\Q$ and $X$ is $\Gm$ with the toric $\Lambda$-structure, then
			Then $K(X)$ is the maximal abelian extension $\bigcup_n \Q(\zeta_n)$.
		\item If $K=\Q$ and $X=\A^1$ with the Chebyshev $\Lambda$-structure, then
			$K(X)$ is the maximal totally real abelian extension $\bigcup_n \Q(\zeta_n+\zeta_n^{-1})$.
		\item $K$ is imaginary quadratic, $X$ is $\P^1_{\maxO_H}$ with the Latt\`es $\Lambda$-structure.
			Then $K(X)$ is the maximal abelian extension of $K$.
		\item If $K$ is general and $X=\Spec \rcl{}{\gf}$, then $K(X)$ is the ray class field $K(\gf)$.
		\item $K(X_1\smcoprod X_2)$ is the compositum $K(X_1)K(X_2)$
	\end{enumerate}

Let us say that an abelian extension $L/K$ is
\emph{$\Lambda$-geometric} if there exists an $X$ as above such that $L\subseteq K(X)$. First observe that any
finite extension of a $\Lambda$-geometric extension is $\Lambda$-geometric, by (4) and (5) above. Therefore
there is no maximal $\Lambda$-geometric extension unless, as in the examples above, the maximal abelian
extension itself is $\Lambda$-geometric.

\begin{enumerate}
	\item[(Q4)] Are there number fields other than $\Q$ and imaginary quadratic fields for which the
	maximal abelian extension is $\Lambda$-geometric?
\end{enumerate}

It is natural to consider Shimura's generalization of Kronecker's theory to CM fields and abelian varieties. We
expect that it can be realized in our framework, or at least that some version of it. But note that,
assuming $[K:\Q]>2$, the maximal abelian extension generated by Shimura's method is an infinite subextension of
the maximal abelian extension itself---the relative Galois group is an infinite group of exponent $2$.
(See~\cite{Shimura:class-fields}\cite{Ovseevich:cm-type}\cite{Wei:cm-motives}.)

\begin{enumerate}
	\item[(Q5)] Let $K$ be a CM field of degree greater than $2$. Is Shimura's extension $\Lambda$-geometric?
	If so, is there an infinite extension of it which is $\Lambda$-geometric?
\end{enumerate}

\subsection{} \emph{Production of $\Lambda$-schemes from ray class algebras.}
It appears difficult to find $\Lambda$-schemes of finite type which generate large infinite abelian extensions.
Every example we know ultimately comes from varieties with complex multiplication.
Here we will consider an alternative---the possibility of manufacturing $\Lambda$-schemes of finite type
by interpolating the ray class schemes $\Spec(\rcl{}{\gf})$, in the way that $\Gm$ can be viewed
as interpolating the $\mu_n$ as $n$ varies. This raises some questions which have the flavor of algebraic number
theory more than the geometric questions above, and hence have a special appeal.

Let $\gr$ be a product of real places of $K$, and let $X$ a reduced
$\Lambda$-scheme of finite type over $\maxO_K$. Assume further that the
union
	$$
	\bigcup_{\gf\in\cyc}X(\gf)
	$$
of the closed subschemes $X(\gf)$ is Zariski dense in $X$. If it is not, replace $X$ with the closure.
Then all the information needed to construct $X$ is in principle available inside the function algebra of
	$$
	\colim_{\gf\in\cyc} \per{X}{\gf}.
	$$
To be sure, this ind-scheme and $X$ are quite far apart, much as an abelian variety and its $p$-divisible groups 
are. But pressing on, if $X$ satisfies the property in (Q2) in the introduction, then this colimit would be 
isomorphic to
	$$
	\colim_{\gf\in\cyc}\big(\Spec\rcl{}{\gf}\big),
	$$
and so all the information need to construct $X$ is in principle available in projective limit
	$$
	\perv{\gr}{}
	=\lim_{\gf\in\cyc}\rcl{}{\gf},
	$$
which can be viewed as a construction purely in the world of algebraic number theory in that 
it depends only on $K$ and $\gr$ and not on any variety $X$.

For example in the cyclotomic context, where $K=\Q$ and $\gr=\infty$, the injective map
	$$
	\Z[x^{\pm 1}] \longmap \lim_n \Z[x]/(x^n-1) = \perv{\infty}{}
	$$
realizes the function algebra of $\Gm$ as a dense finitely generated sub-$\Lambda$-ring of 
$\perv{\infty}{}$. 

We can ask whether similar subrings exist for general $K$:
\begin{enumerate}
	\item[(Q6)] Does $\perv{\gr}{}$ have a dense sub-$\maxO_K$-algebra which
		is finitely generated as an $\maxO_K$-algebra? Does it have a dense sub-$\Lambda$-ring which
		is finitely generated as an $\maxO_K$-algebra? 
\end{enumerate}
It might be possible to cook up such a subring purely algebraically, instead of going through geometry.

\subsection{}\emph{The cotangent space.}
One first step in finding such a subring might be to guess its dimension by looking at the cotangent space
of the ray class algebras at a point modulo $\gp$. For example, the cotangent space of $\mu_{p^n}$
modulo $p$ at the origin is $1$-dimensional, at least if $n\geq 1$.

For any ideal $\ga\in\Id_{\mx_K}$ (and $\gr$ still a product of real places),
let $I_\ga$ denote the kernel of the morphism
	\begin{equation}
	\label{eq:cotan}
	\rcl{}{\gr\ga}\longlabelmap{\psi_\ga} \rcl{}{\gr}=\maxO_{K(\gr)}.
	\end{equation}
So we have an exact sequence
	$$
	0\longmap I_\ga \longmap \rcl{}{\gr\ga} \longlabelmap{\psi_\ga} 
		\maxO_{K(\gr)} \longmap 0.
	$$
Therefore $I_\ga/I_\ga^2$ is naturally an $\maxO_{K(\gr)}$-module. It is finitely generated since
$\rcl{}{\gr\ga}$ is noetherian, being finite over $\Z$.

\begin{enumerate}
	\item[(Q7)] Given a maximal ideal $\gq\subset \maxO_{K(\gr)}$ with residue field $k$, is the dimension
		$$
		\dim_k(k\otimes_{\maxO_{K(\gr)}}I_\ga/I_\ga^2)
		$$
		constant for large $\ga$? 
\end{enumerate}
If so, can it be expressed in terms of the classical algebraic number theoretic invariants of $K$?
One might hope it is the number of places of $K$ at infinity.

\bibliography{references}
\bibliographystyle{plain}

\end{document}